\numberwithin{equation}{section}
\numberwithin{figure}{section}
\renewcommand*{\thefootnote}{\fnsymbol{footnote}}
\title{Euler simulation of interacting particle systems and McKean-Vlasov SDEs with fully superlinear growth drifts in space and interaction
}
\author{
\normalsize Xingyuan Chen\textit{$^{a}$} \\
        \small   X.Chen-176@sms.ed.ac.uk 
\and 
 \normalsize Gon\c calo dos Reis\textit{$^{a,b,}$}\footnote{G.d.R. acknowledges support from the \emph{Funda{\c c}$\tilde{\text{a}}$o para a Ci$\hat{e}$ncia e a Tecnologia} (Portuguese Foundation for Science and Technology) through the project UIDB/00297/2020 and UIDP/00297/2020 (Centro de Matem\'atica e Aplica\c c$\tilde{\text{o}}$es CMA/FCT/UNL).} \\
        \small  G.dosReis@ed.ac.uk
    }
\date{%
    \footnotesize 
    $^{a}$~School of Mathematics, University of Edinburgh, The King's Buildings,  UK
    \\
    $^{b}$~Centro de Matem\'atica e Aplica\c c$\tilde{\text{o}}$es (CMA), FCT, UNL, Portugal
    \\
    % [2ex]
    % \\
    % \normalsize
    % \today
    % \ddmmyyyydate\today 
    \longdate \today \ (\currenttime)
    \vspace{-1.0cm}
}
\theoremstyle{plain}
\newtheorem{theorem}{Theorem}[section]
\newtheorem{lemma}[theorem]{Lemma}
\newtheorem{proposition}[theorem]{Proposition}
\newtheorem{definition}[theorem]{Definition}
\newtheorem{remark}[theorem]{Remark}
\newtheorem{assumption}[theorem]{Assumption}
\newcommand{\bE}{\mathbb{E}}
\newcommand{\bF}{\mathbb{F}}
\newcommand{\bN}{\mathbb{N}}
\newcommand{\bP}{\mathbb{P}}
\newcommand{\bR}{\mathbb{R}}
\newcommand{\bS}{\mathbb{S}}
\newcommand{\cF}{\mathcal{F}}
\newcommand{\cN}{\mathcal{N}}
\newcommand{\cO}{\mathcal{O}}
\newcommand{\cP}{\mathcal{P}}
\newcommand{\trace}{\textrm{Trace}}
\definecolor{darkgreen}{rgb}{0,0.35,0}
\newcommand{\1}{\mathbbm{1}}
\newcommand{\sign}{\text{sign}}
\newcommand{\tnp}{{t_{n+1}}}
\newcommand{\dd}{\mathrm{d}}
\newcommand{\bx}{ \overline{X} }
\newcommand{\by}{ \overline{Y} }
\newcommand{\hx}{ \hat{X} }
\newcommand{\kt}{ \kappa(t) }
\newcommand{\ks}{ {\kappa(s)} }
\newcommand{\hm}{ \hat{\mu} }
\newcommand{\bm}{ \overline{\mu}}
\newcommand{\hp}{ \hat{\Psi}}
\newcommand{\bp}{ \overline{\Psi}}
\begin{document}

\selectlanguage{english}

\maketitle
%\tableofcontents
%%%%%%%%%%%%%%%%%%%%%%%%%%%%%%%%%%%%%%%%%%%%%%%%%%%%%%%%%%%%%%%%%
%%%%%%%%%%%%%%%%%%%%%%%%%%%%%%%%%%%%%%%%%%%%%%%%%%%%%%%%%%%%%%%%%
%%%%%%%%%%%%%%%%%%%%%%%%%%%%%%%%%%%%%%%%%%%%%%%%%%%%%%%%%%%%%%%%%
%%%%%%%%%%%%%%%%%%%%%%%%%%%%%%%%%%%%%%%%%%%%%%%%%%%%%%%%%%%%%%%%%
%%%%%%%%%%%%%%%%%%%%%%%%%%%%%%%%%%%%%%%%%%%%%%%%%%%%%%%%%%%%%%%%%
\renewcommand*{\thefootnote}{\arabic{footnote}}

%%%%%%%%%%%%%%%%%%%%%%%%%%%%%%%%%
\begin{abstract} 
We consider in this work the convergence of a split-step Euler type scheme (SSM) for the numerical simulation of interacting particle Stochastic Differential Equation (SDE) systems  and McKean-Vlasov Stochastic Differential Equations (MV-SDEs) with full super-linear growth in the spatial and the interaction component in the drift, and non-constant Lipschitz diffusion coefficient. 

The super-linear growth in the interaction (or measure) component stems from convolution operations with super-linear growth functions allowing in particular application to the granular media equation with multi-well confining potentials. From a methodological point of view, we avoid altogether functional inequality arguments (as we allow for non-constant non-bounded diffusion maps).

The scheme attains, in stepsize, a near-optimal classical (path-space) root mean-square error rate of $1/2-\varepsilon$ for $\varepsilon>0$ and an optimal rate $1/2$ in the non-path-space mean-square error metric. All findings are illustrated by numerical examples. In particular, the testing raises doubts if taming is a suitable methodology for this type of problem (with convolution terms and non-constant diffusion coefficients).
\end{abstract}
%%%%%%%%%%%%%%%%%%%%%%%%%%%%%%%%%

{\bf Keywords:} 
 stochastic interacting particle systems, 
McKean-Vlasov equations, 
split-step Euler methods, 
superlinear growth in measure,
superlinear growth in space\\ 2000 MSC: 65C05, 65C30, 65C35

%%%%%%%%%%%%%%%%%%%%%%%%%%%%%%%%%%%%%%%%%%%%%%%%%%%%%%%%%%%%%%
%%%%%%%%%%%%%%%%%%%%%%%%%%%%%%%%%%%%%%%%%%%%%%%%%%%%%%%%%%%%%%
%%%%%%%%%%%%%%%%%%%%%%%%%%%%%%%%%%%%%%%%%%%%%%%%%%%%%%%%%%%%%%
%%%% \BEGIN SECTION
%%%%%%%%%%%%%%%%%%%%%%%%%%%%%%%%%%%%%%%%%%%%%%%%%%%%%%%%%%%%%%
% \newpage 
\footnotesize
\setcounter{tocdepth}{2}
\tableofcontents
\normalsize
% \footnotesize
% \newpage 

% \newpage
\section{Introduction}

Interactions of organisms, humans, and objects are common phenomena seen easily in collective behaviour within natural and social sciences. Models for interacting particle systems (IPS) and their mesoscopic limits, as the number of particles grows to infinity, receive presently enormous attention given their applicability in areas such as finance, mathematical neuroscience, biology, machine learning, and physics: animal swarming, cell movement induced by chemotaxis, opinion dynamics, particle movement in porous media, electrical battery modelling, self-assembly of particles (see for example \cite{stevens1997aggregation,Holm2006,Carrillo2010,bolley2011stochastic,Dreyer2011phasetransition,Baladron2012,marchioro2012mathematical,Kolokolnikov2013,Bossy2015,Guhlke2018,giesecke2020inference,Carrillo2019,li2019mean,jin2020rbm1} and references). 
In this work, we address the numerical approximation of interacting particle systems given by stochastic differential equations (SDE) and their mesoscopic limit equations (or a class thereof) called McKean--Vlasov Stochastic Differential Equations (MV-SDE) that follow as the scaling limit of an infinite number of particles.     

We understand the IPS as an $N$-dimensional system of $\bR^d$-valued interacting particles where each particle is governed by a Stochastic Differential Equation (SDE). Let $i=1,\cdots,N$ and consider $N$ particles $(X^{i,N}_t)_{t\in[0,T]}$ with independent and identically distributed ${X}_{0}^{i,N}=X_{0}^{i}$ (the initial condition is random, but independent of other particles) and satisfying the $(\bR^d)^N$-valued SDE \eqref{Eq:MV-SDE Propagation} 
% (and $v$ given as in \eqref{Eq:General MVSDE shape of v})
\begin{align}
\label{Eq:MV-SDE Propagation}
& \dd {X}_{t}^{i,N} 
 = \big( v (X_t^{i,N}, \mu^{X,N}_{t}  )+ b (t,{X}_{t}^{i,N}, \mu^{X,N}_{t}  )\big) \dd t 
+ \sigma (t,{X}_{t}^{i,N} , \mu^{X,N}_{t} ) \dd W_{t}^{i}
, \quad X^{i,N}_0=X_0^i\in L_{0}^{m}( \bR^{d}) , 
\\
\label{Eq:MV-SDE Propagation-drifts}
& 
\textrm{for } v ( {X}_{t}^{i,N} ,  \mu^{X,N}_{t}  )
= \Big(\frac1N \sum_{j=1}^N f({X}_{t}^{i,N}-{X}_{t}^{j,N}) \Big)  +u ( {X}_{t}^{i,N} ,  \mu^{X,N}_{t}  ) \textrm{ with } \mu^{X,N}_{t}(\dd x) := \frac{1}{N} \sum_{j=1}^N \delta_{X_{t}^{j,N}}(\dd x),
\end{align}
where $\delta_{{X}_{t}^{j,N}}$ is the Dirac measure at point ${X}_{t}^{j,N}$,  $\{W^{i}\}_{ i=1,\cdots,N}$ are independent Brownian motions and $L^m_0(\bR^d)$ denotes the usual $m$th-moment integrable space of $\bR^d$  random variables. 

For the IPS class \eqref{Eq:MV-SDE Propagation}, the limiting class as $N\to \infty$ are called McKean-Vlasov SDEs and the passage to the limit operation is known as ``Propagation of Chaos''. 
This class was first described by McKean \cite{McKean1966}, where he introduced the convolution type interaction (the $v$ in \eqref{Eq:MV-SDE Propagation-drifts}). This is a class of Markov processes associated with nonlinear parabolic equations where the map $v$ in \eqref{Eq:MV-SDE Propagation-drifts} is also called ``self-stabilizing''. The IPS underpinning our work \eqref{Eq:MV-SDE Propagation}-\eqref{Eq:MV-SDE Propagation-drifts}  has been studied widely, from a variety of points of view and as early as \cite{Sznitman1991} (for a general survey under global Lipschitz conditions and boundedness).  

McKean-Vlasov Stochastic Differential Equations (MV-SDEs) with convolution type drifts have general dynamics given by
\begin{align}
\label{Eq:General MVSDE}
\dd X_{t} &= \big( v(X_{t},\mu_{t}^{X}) + b(t,X_{t}, \mu_{t}^{X})\big)\dd t + \sigma(t,X_{t}, \mu_{t}^{X})\dd W_{t},  \quad X_{0} \in L_{0}^{m}( \bR^{d}),
\\
\label{Eq:General MVSDE shape of v}
&
\textrm{where }~ v(x,\mu)
= \int_{\bR^{d}  } f(x-y) \mu(\dd y) +u(x,\mu)
\quad\textrm{with}\quad \mu_{t}^{X}=\textrm{Law}(X_t),
\end{align}
where $\mu_{t}^{X}$ denotes the law of the solution process $X$ at time $t$, $W$ is a Brownian motion in $\bR^d$, $v,f,u,b, \sigma$ are measurable maps along with a sufficiently integrable initial condition $X_0$.

An embodiment (among many) for this typology of models is particle motion modelling that encapsulates three sources of forcing. Namely, the particle moves through a multi-well landscape potential gradient (the map $u$ and $b$), the trajectories are affected by a Brownian motion (and associated diffusion coefficient $\sigma$), and the convolution self-stabilisation forcing characterises the influence of a large population of identical particles (under the same laws of motion $v$ and $f$) on the particle. 
In effect, $v$ acts on the particle as an average attractive/repulsive force exerted on the said particle by a population of similar particles (through the potential $f$), see \cite{2013doublewell,adams2020large} and further examples in \cite{jin2020rbm1}. 
For instance, under certain constraints on $f$ the map $v$ adds inertia to the particle's motion, which in turn delays exit times from the domain of attraction and alters exit locations \cite{HerrmannImkellerPeithmann2008,dosReisSalkeldTugaut2017,adams2020large}. 
The self-stabilisation term in the system induces in the corresponding Fokker-Plank equation a nonlinear term of the form  $\nabla[\rho . \nabla(f \star \rho)]$ (where $\rho$ stands for the processes density while `$\star$' is the usual convolution operator) \cite{Carrillo2010,Carrillo2019,jin2020rbm1}. The granular media Fokker-Plank equation from biochemistry is a good example of an equation featuring this kind of structure  \cite{malrieu2006concentration,cattiaux2008probabilistic,adams2020large}. 
The literature on MV-SDE is growing explosively with many contributions addressing well-posedness, regularity, ergodicity, nonlinear Fokker-Planck equations, large deviations \cite{HuangRenWang2021DDSDE-wellposedness,dosReisSalkeldTugaut2017,adams2021operatorsplitting,adams2021entropic}. The convolution framework has been given particular attention as it underpins many settings of interest \cite{malrieu2006concentration,cattiaux2008probabilistic,harang2020pathwise,2013doublewell}. 
The literature is even richer under the restriction to a constant diffusion term, $\sigma=\textrm{const}$, as it gives access to methodologies based on Langevin-type dynamics but also to the machinery of Functional inequalities (e.g., log-Sobolev and  Poincare inequalities). We point to \cite{harang2020pathwise} for a nice overview on several \textit{open} problems of interest where $f$ is a singular kernel (and $\sigma$ is a constant): including Coulomb interaction $f(x)=x/|x|^d$, Bio-Savart law $f(x)=x^{\bot}/|x|^d$; Cucker-Smale models $f(x)=(1+|x|^2)^{-\alpha}$ for $\alpha>0$; crystallisation $f(x) = |x|^{-2p} - 2|x|^{-p}$ and take $p\to \infty$; 2D viscous vortex model with $f(x)=x/|x|^2$  \cite{fournier2014propagation}.

\smallskip

\emph{Super-linear interaction forces.} 
For the IPS \eqref{Eq:MV-SDE Propagation}-\eqref{Eq:MV-SDE Propagation-drifts} or the MV-SDE \eqref{Eq:General MVSDE}-\eqref{Eq:General MVSDE shape of v},  we focus on the class where the involved functions are not (necessarily) globally Lipschitz functions. Concretely, the map $v$ is a super-linear growth function in both space and measure component --- we assume that $f$ and $u$ in \eqref{Eq:General MVSDE shape of v} behave like a general polynomial but also satisfy a one-sided Lipschitz condition to control for radial growth (the specific details are given in Assumption \ref{Ass:Monotone Assumption} below); the maps $b$ and $\sigma$ are assumed globally Lipschitz functions.  

From the theoretical point of view, this class is presently well understood. Well-posedness was generally established in \cite{adams2020large}; \cite{herrmann2012self} investigate different properties of the invariant measures for particles in double-well confining potential and later \cite{2013doublewell} investigate the convergence to stationary states. Large deviations and exit times for such self-stabilising diffusions are established in  \cite{HerrmannImkellerPeithmann2008,adams2020large}. The study of probabilistic properties and parametric inference (under constant diffusion) for this class is given in \cite{GENONCATALOT2021}. Two recent studies on parametric inference \cite{belomestny2021semiparametric,comte2022nonparametric} include numerical studies for the particle interaction (\cite{GENONCATALOT2021} does not) but do not tackle super-linear growth in the interaction component (\cite{GENONCATALOT2021} does).

% \textbf{papers looking at super-linear growth convolution elements.}
% Double-Well:  and later in  No subsequent work has analysed numerical approximations for such equations.   

% For instance, 

To the best of our knowledge and except for \cite{Malrieu2003}, no numerical methods exists for this class as no general method allows for super-linear growth interaction kernels. For emphasis, standard SDE results for super-linear growth drifts do not yield convergence results independent of the number of particles $N$. In other words, by treating the interacting particle system \eqref{Eq:MV-SDE Propagation} as an $(\mathbb R^{d})^N$-dimensional SDE known results from SDE numerics with coefficients with super-linear growth can be applied directly.  \textit{However}, all estimates would depend on the system's dimension, $Nd$, and hence ``explode'' as $N$ tends to infinity. In this work, we introduce new technical elements to overcome this difficulty, which, to the best of our knowledge, are new. It's noteworthy to observe that the direct numerical discretization of the IPS system \eqref{Eq:MV-SDE Propagation}-\eqref{Eq:MV-SDE Propagation-drifts} leads to a costly computational cost of $\cO(N^2)$ and hence care is needed.

Many of the current numerical methods in the literature of MV-SDEs rely on the particle approximation given by the IPS, and the known quantified rate for the propagation of chaos \cite{adams2020large,chaintron2021propagation,lacker2021hierarchies,Lacker2018}: taming \cite{reis2018simulation,kumar2020explicit}, time-adaptive \cite{reisinger2020adaptive}, early Split-Step Methods (SSM) methods \cite{2021SSM} -- all these contributions allow for super-linear growth in space only. 
Further noteworthy contributions include  \cite{belomestny2018projected,hutzenthaler2022multilevel,beck2021full,gobetpagliarani2018,agarwalstefano2021,bossytalay1997,crisanmcmurray2019,reis2018importance,talayvaillant2003}. 
Within the existing literature, no method can deal with a super-linear growth $f$ component; all cited works make the assumption of a Lipschitz behaviour in $\mu \mapsto v(\cdot,\mu)$ (which, in essence, entail that $\nabla f$ is bounded).

 \smallskip
 
\textbf{Our contribution.} 
\emph{The results of this manuscript provide for both the numerical approximation of interacting particle SDE systems  \eqref{Eq:MV-SDE Propagation}-\eqref{Eq:MV-SDE Propagation-drifts}, and McKean--Vlasov SDEs \eqref{Eq:General MVSDE}-\eqref{Eq:General MVSDE shape of v}.} 

The main contribution of this work is the numerical scheme and its convergence analysis. We present a particle approximation SSM algorithm inspired in \cite{2021SSM} for the numerical approximation of MV-SDEs and associated particle systems with drifts featuring super-linear growth in space and measure, and where the diffusion coefficient satisfies a general Lipschitz condition. The well-posedness result (Theorem \ref{Thm:MV Monotone Existence} below) and Propagation of Chaos (Proposition \ref{Prop:Propagation of Chaos} below) follow from known literature \cite{adams2020large} -- in fact, our Proposition \ref{Prop:Propagation of Chaos} establishes the well-posedness of the particle system hence closing the small gap present in \cite[Theorem 3.14]{adams2020large}. The only existing work tackling this involved setting via a fully implicit scheme is \cite{Malrieu2003}. They rely on (Bakry-Emery) functional inequalities methodologies under specific structural assumptions (constant elliptic diffusion, $u=b=0$ and differentiability) that we do not make. 

The scheme we propose is a split-step scheme inspired in \cite{2021SSM} (see Definition \ref{def:definition of the ssm} below) that first solves an implicit equation given by the SDE's drift component only then takes that outcome and feeds it to the remaining dynamics of the SDE via a standard Euler step. 
The idea is that the implicit step deals with the problematic super-linear growth part, and the elements passed to the Euler step are better behaved. In \cite{2021SSM}, there is only super-linear growth in the space variables, and the measure component is assumed Lipschitz; here both space and measure component have super-linear growth. From a practical point of view, the implicit step in \cite{2021SSM} for a particle $i$ only depended on the elements of particle $i$ (the measure being fixed to the previous time step); hence one solves $N$ decoupled equations in $\bR^d$. In this manuscript, the implicit step for particle $i$ involves the whole system of particles entailing that one needs to solve one-single system but in $(\bR^d)^N$ and the solution depends on all terms. This change in the scheme makes it much harder to obtain moment estimates for the scheme. For the setting of \cite{2021SSM} there were already several competitive schemes present in the literature, e.g., taming \cite{reis2018simulation,kumar2020explicit} and time-adaptive \cite{reisinger2020adaptive} and the numerical study there was comparative. For this work, no alternative numerical scheme exists -- see below for further discussion regarding the implementation of taming for this class.

Results-wise, we provide two convergence results in the strong-error\footnote{We understand a ``strong'' error metric as a metric that depends on the joint distribution of the true solution and the numerical approximation. In contrast to the weak error where one needs only the marginals separately. Theorem \ref{theorem:SSM: strong error 1} and \ref{theorem:SSM: strong error 2} showcase two ``strong'' but different error metrics.} sense. For the classical (path-space) root mean-square error, see Theorem \ref{theorem:SSM: strong error 2},  we achieve a nearly-optimal convergence rate of $1/2-\varepsilon$ with $\varepsilon>0$. 
The main difficulty, also where one of our main contributions lie, is in establishing higher-order moment bounds for the numerical scheme in a way that is compatible with the convolution component in \eqref{Eq:MV-SDE Propagation-drifts} or \eqref{Eq:General MVSDE shape of v} and It\^o-type arguments -- see Theorem \ref{theorem：moment bound for the big theorm time extensions }. We provide a second strong (non-path-space) mean-square error criteria, see Theorem \ref{eq: scheme continous extension in SDE form}, that attains the optimal rate $1/2$. This 2nd result requires only the higher moments of the IPS' solution process and the 2nd-moments of the numerical approximation \cite{2015ssmBandC} (which are easier to obtain). 
We emphasise that this 2nd notion of strong convergence (see Theorem \ref{eq: scheme continous extension in SDE form}) is also standard (albeit less) within Monte Carlo literature. It also controls the variance of the approximation error (simply not in path-space). Hence, it is sufficient for the many uses one can give to the simulation output -- as one would do given any other Monte Carlo estimators (e.g., confidence intervals). Lastly, we show that with a constant diffusion coefficient, one attains the higher convergence rate of $1.0$ (see Theorem \ref{theorem：gm strong error}). 

We illustrate our findings with extended numerical tests showing agreement with the theoretical results and discussing other properties for schemes: periodicity in phase-space, the impact of the number of particles and numerical rate of Propagation of Chaos, and complexity versus runtime. For comparison, we implement the taming algorithm \cite{reis2018simulation} for the setting (without proof) and find that in the example with constant diffusion, taming performs similarly to the SSM. In the non-constant diffusion example, it performs very poorly. This latter finding raises questions (for future research) if taming is a suitable methodology for this class.

\smallskip

\textbf{Organisation of the paper.} In Section \ref{sec:two} we set the notation and framework. In Section \ref{section:SSM scheme and main results.}, we state the SSM scheme and the two main convergence results. Section \ref{sec:examples} provides numerical illustrations (for the granular media model and a double-well model with non-constant diffusion). All proofs are given in Section \ref{sec:theSSMresults}.

% \medskip

%%%%%%%%%%%%%%%%%%%%%%%%%%%%%%%%%%%%%%%%%%%%%%%%%%%%%%%%%%%%%
%%%% \END SECTION
%%%%%%%%%%%%%%%%%%%%%%%%%%%%%%%%%%%%%%%%%%%%%%%%%%%%%%%%%%%%%
%%%%%%%%%%%%%%%%%%%%%%%%%%%%%%%%%%%%%%%%%%%%%%%%%%%%%%%%%%%%%
%%%%%%%%%%%%%%%%%%%%%%%%%%%%%%%%%%%%%%%%%%%%%%%%%%%%%%%%%%%%%
%
%
%

%
%
%
%%%%%%%%%%%%%%%%%%%%%%%%%%%%%%%%%%%%%%%%%%%%%%%%%%%%%%%%%%%%%%%%%%%%	
%%%%%%%%%%%%%%%%%%%%%%%%%%%%%%%%%%%%%%%%%%%%%%%%%%%%%%%%%%%%%%%%%%%%
%%% \BEGIN SECTION
%%%%%%%%%%%%%%%%%%%%%%%%%%%%%%%%%%%%%%%%%%%%%%%%%%%%%%%%%%%%%%%%%%%%
% \newpage
\section{The split-step method for MV-SDEs and interacting particle systems}
\label{sec:two}

We follow the notation and framework set in  \cite{adams2020large,2021SSM}.

\subsection{Notation and Spaces}
\label{section: notations and space}
Let $\bN$ be the set of natural numbers starting at $0$, $\bR$ denotes the real numbers. For $a,b\in \bN$ with $a\leq b$,  define $\llbracket a,b\rrbracket:= [a,b] \cap \bN =  \{a,\cdots,b\}$. 
For $x,y \in \bR^d$ denote the scalar product of vectors by $x \cdot y$; and $|x|=(\sum_{j=1}^d x_j^2)^{1/2}$ the Euclidean distance. The $\mathbf{0}$ denotes the origin in $\bR^d$. Let $\1_A$ be the indicator function of set $A\subset \bR^d$. For a matrix $A \in \bR^{d\times n}$ we denote by $A^\intercal$ its transpose and its Frobenius norm by $|A|=\trace\{A A^\intercal\}^{1/2}$. Let $I_d:\bR^d\to \bR^d$ be the identity map. For collections of vectors, let the upper indices denote the distinct vectors, whereas the lower index is a vector component, i.e., $x^l_j$ denote the $j$-th component of $l$-th vector. $\nabla$ denotes the vector differential operator, $\partial$ denotes the partial differential operator.

We introduce over $\bR^d$ the space of probability measures $\cP(\bR^d)$ and its subset $\cP_2(\bR^d)$ of those with finite second moment. The space $\cP_2(\bR^d)$ is Polish under the Wasserstein distance
\begin{align}
\label{eq:def of wasserstein distance}
W^{(2)}(\mu,\nu) = \inf_{\pi\in\Pi(\mu,\nu)} \Big(\int_{\bR^d\times \bR^d} |x-y|^2\pi(\dd x,\dd y)\Big)^\frac12, \quad \mu,\nu\in \cP_2(\bR^d) .
\end{align}   
where $\Pi(\mu,\nu)$ is the set of couplings for $\mu$ and $\nu$ such that $\pi\in\Pi(\mu,\nu)$ is a probability measure on $\bR^d\times \bR^d$ such that $\pi(\cdot\times \bR^d)=\mu$ and $\pi(\bR^d \times \cdot)=\nu$. 

Let our probability space be a completion of $(\Omega, \bF, \cF,\bP)$ with $\bF=(\cF_t)_{t\geq 0}$ carrying an $\bR^l$-valued Brownian motion $W=(W^1,\cdots,W^l)$ and generating the probability space's filtration, augmented by all $\bP$-null sets, and with an additionally sufficiently rich sub $\sigma$-algebra $\cF_0$ independent of $W$. We denote by $\bE[\cdot]=\bE^\bP[\cdot]$ the usual expectation operator with respect to $\bP$.

We consider some finite terminal time $T<\infty$ and use the following notation for spaces (standard in the (McKean-Vlasov) SDE literature \cite{reis2018simulation,2021SSM}).  
For $0\leq t\leq T$, let $L_{t}^{p}\left(\mathbb{R}^{d}\right)$ define the space of $\mathbb{R}^{d}$-valued, $\mathcal{F}_{t}$-measurable random variables $X$, that satisfy $\mathbb{E}\left[|X|^{p}\right]^{1 / p}<\infty$. 
% Consider some finite terminal time  $T<\infty$. 
Define $\mathbb{S}^{m}([0,T])$ to be, for $m \geqslant 1$, the space of $\mathbb{R}^{d}$ -valued, $\cF_\cdot$-adapted processes $Z$, that satisfy $\mathbb{E}\left[\sup _{0 \leqslant t \leqslant T}|Z_t|^{m}\right]^{1 / m}<\infty$.

Throughout the text, $C$ denotes a generic constant positive real number that may depend on the problem's data, may change from line to line but is always independent of the constants $h,M,N$ (associated with the numerical scheme and specified below) but possibly depend on the terminal time $T$ (and other fixed problem data).

\subsection{Framework}

Let $W$ be an $l$-dimensional Brownian motion and take the measurable maps $v:\bR^d \times\cP_2(\bR^d) \to \bR^d$, $f:\bR^d \to \bR^d$, $b:[0,T] \times \bR^d \times\cP_2(\bR^d) \to \bR^d$ and $\sigma:[0,T] \times \bR^d \times \cP_2(\bR^d) \to \bR^{d\times l}$.
The MV-SDE of interest for this work is Equation \eqref{Eq:General MVSDE} (for some $m \geq 1$), where $\mu_{t}^{X}$ denotes the law of the process $X$ at time $t$, i.e., $\mu_{t}^{X}=\bP\circ X_t^{-1}$. 
We make the following assumptions on the coefficients.
\begin{assumption}
\label{Ass:Monotone Assumption} Let $b$ and $\sigma$ $1/2$-H\"{o}lder continuous in time, uniformly in $x\in \bR^d$ and $\mu\in \cP_2(\bR^d)$. Assume that $b,\sigma$ are uniformly Lipschitz in the sense that there exists $L_b, L_\sigma\ge0$ such that for all $t \in[0,T]$ and all $x, x'\in \bR^d$ and $\forall \mu, \mu'\in \cP_2(\bR^d)$ we have that 
\begin{align*}
&(\mathbf{A}^b)
&|b(t, x, \mu)-b(t, x', \mu')|^2
\leq L_b \big(|x-x'|^2 + W^{(2)}(\mu, \mu')^2 \big),
\\
&(\mathbf{A}^\sigma)
&|\sigma(t, x, \mu)-\sigma(t, x', \mu')|^2\leq 
L_\sigma \big(|x-x'|^2 + W^{(2)}(\mu, \mu')^2 \big).
\end{align*}
$(\mathbf{A}^u)~$ Let $u$  satisfy: there exist $ L_{u} \in \bR$, $L_{\hat{u}}>0$,  $L_{\tilde{u} } \ge 0$, 
% $q\in \bN$ and $q>1$ 
$q_1>0$ 
such that for all $t\in[0,T]$, $ x, x'\in \bR^d$ and  $\forall \mu, \mu'\in \cP_2(\bR^d)$, it holds that 
\begin{align*}
\langle x-x', u(x,\mu)-u(x',\mu) \rangle 
& 
\leq L_{u}|x-x'|^{2}
& \textrm{(One-sided Lipschitz in space)},
\\
|u(x,\mu)-u( x',\mu)| 
& 
\leq L_{\hat{u}}(1+ |x|^{q_1} + |x'|^{q_1}) |x-x'| 
& \textrm{(Locally Lipschitz in space)},
\\
|u(x,\mu)-u( x,\mu')|^2 
&\leq L_{\tilde{u} } W^{(2)}(\mu, \mu')^2
& \textrm{(Lipschitz in measure)}.
\end{align*}

$(\mathbf{A}^f)~$ Let $f$  satisfy: there exist $ L_{f} \in \bR$, $L_{\hat{f}}>0$, 
$q_2>0$ 
such that for all $t\in[0,T]$, $ x, x'\in \bR^d$, it holds that 
\begin{align*}
\langle x-x', f(x)-f(x') \rangle 
& 
\leq L_{f}|x-x'|^{2}
& \textrm{(One-sided Lipschitz)},
\\
|f(x)-f( x')| 
& 
\leq L_{\hat{f}}(1+ |x|^{q_2} + |x'|^{q_2}) |x-x'| 
& \textrm{(Locally Lipschitz)},\\
f(x)&=-f(-x), 
& \textrm{(Odd function)}.
\end{align*}
Assume the normalisation\footnote{This constraint is a soft as the framework allows to easily redefine $f$ as $\hat f(x):=f(x)-f(\mathbf{0})$ with $f(\mathbf{0})$ merged into $b$.} $f(\mathbf{0})=\mathbf{0}$. Lastly, and for convenience, we set $q=\max\{q_1,q_2\}$ (and we have $q>0$).
\end{assumption} 
The benefits of choosing drift=$v+b$ with $b$ being uniformly Lipschitz are discussed below in Remark \ref{rem:Constraint on h is soft} (see also \cite{2021SSM}). Certain useful properties can be derived from these assumptions.
\begin{remark}[Implied properties]
\label{remark:ImpliedProperties}
Under Assumption \ref{Ass:Monotone Assumption}, take some $C>0$. Then for all $t \in [0,T]$, $x,x',z\in \bR^{d}$ and $\mu\in \cP_2(\bR^d)$, since $f$ is a normalised odd function (i.e., $f(\mathbf{0})=\mathbf{0}$), we have 
\begin{align*}
&\langle x,f(x)\rangle = \langle x-\mathbf{0},f(x)-f(\mathbf{0})\rangle+\langle x,f(\mathbf{0})\rangle \le L_f|x|^2+|x||f(\mathbf{0})|= L_f|x|^2.
\end{align*}
Also, for the function $u$, define $ \widehat L_u=L_u+1/2,~C_u= |u(0,\delta_0) |^2$, and thus by Young's inequality
\begin{align*}
\langle x,u(x,\mu)\rangle &
\le
C_u +\widehat L_u|x|^2+  L_{\tilde{u} } W^{(2)}(\mu,\delta_0)^2
,~\langle x-x',u(x,\mu)-u(x',\mu') \rangle
 \le \widehat L_u|x-x'|^2 + \frac{ L_{\tilde{u} }}{2} W^{(2)}(\mu,\delta_0)^2.
\end{align*}
Using the properties of the convolution, $v$ of \eqref{Eq:General MVSDE} also satisfies a one-sided Lipschitz condition in space 
\begin{align*}
    \langle x-x',v(x,\mu)-v(x',\mu) \rangle
    &\le \int_{\bR^{d} } L_f |x-x'|^2  \mu(dz) +  L_u |x-x'|^2
    ~=~ (L_f+L_u) |x-x'|^2.
\end{align*}
Moreover, for $\psi\in\{b,\sigma\}$, by Young's inequality, we have 
\begin{align*}
\langle x,\psi(t,x,\mu) \rangle 
\le
C(1+|x|^2+W^{(2)}(\mu,\delta_0)^2 )
\quad \textrm{and}\quad 
  |\psi(t,x,\mu)|^2
   \le
   C(1+|x|^2+W^{(2)}(\mu,\delta_0)^2 ).
\end{align*}
\end{remark}

We first recall a result from \cite{adams2020large} establishing well-posedness of the MV-SDE \eqref{Eq:General MVSDE}-\eqref{Eq:General MVSDE shape of v}. 
\begin{theorem}[Theorem 3.5 in \cite{adams2020large}]
\label{Thm:MV Monotone Existence}
	Let Assumption \ref{Ass:Monotone Assumption} hold and assume for some $m >2(q+1)$, $X_{0} \in L_{0}^{m}(\bR^{d})$.
	Then, there exists a unique solution $X$ to MV-SDE \eqref{Eq:General MVSDE} in $\bS^{m}([0,T])$.  
	For some constant $C>0$ (depending on $T$ and $m$) we have
	\begin{align*}
	\mathbb E \big[ \sup_{0 \leq t \leq T} |X_{t}|^{\widehat m} \big] 
	\leq C \big(1+ \bE\big[|X_0|^{\widehat m}\big]\big) e^{C T},\qquad \textrm{for any }~ \widehat m \in [2,m].
	\end{align*} 
\end{theorem}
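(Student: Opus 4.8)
The plan is to establish wellposedness and the moment bound by a standard fixed-point argument on the space of continuous measure flows, combined with a Gr\"onwall/It\^o-type moment estimate exploiting the one-sided Lipschitz (monotonicity) structure highlighted in Remark \ref{remark:ImpliedProperties}. Fix $\widehat m \in [2,m]$. First I would set up the Picard-type iteration: on the complete metric space of flows $t\mapsto \mu_t \in \cP_2(\bR^d)$ (equipped with a suitably time-weighted supremum of the Wasserstein distance over $[0,T]$), define the map $\Phi$ that sends a flow $\mu$ to $\mathrm{Law}(X^\mu)$, where $X^\mu$ solves the \emph{decoupled} SDE obtained from \eqref{Eq:General MVSDE} by freezing the measure argument to $\mu$. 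For fixed $\mu$ this decoupled SDE has a drift $v(\cdot,\mu_t)+b(t,\cdot,\mu_t)$ which is continuous, one-sided Lipschitz in space (combine the one-sided Lipschitz of $u$ via $(\mathbf{A}^u)$, of $f$ via $(\mathbf{A}^f)$ convolved against $\mu_t$, and the Lipschitz $b$), with polynomial growth, and a globally Lipschitz diffusion $\sigma(t,\cdot,\mu_t)$; existence, uniqueness and $\bS^{\widehat m}$-bounds for such monotone-coefficient SDEs are classical (e.g.\ via Euler approximations and the monotone/Krylov argument).

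Second, I would derive the a priori moment bound. Apply It\^o's formula to $|X^\mu_t|^{\widehat m}$ (or first to $(1+|X^\mu_t|^2)^{\widehat m/2}$ to avoid issues at the origin), take expectations, and use the inner-product bounds from Remark \ref{remark:ImpliedProperties}: $\langle x, v(x,\mu_t)\rangle \le C(1+|x|^2 + W^{(2)}(\mu_t,\delta_0)^2)$ and $\langle x, b(t,x,\mu_t)\rangle \le C(1+|x|^2 + W^{(2)}(\mu_t,\delta_0)^2)$ together with $|\sigma(t,x,\mu_t)|^2 \le C(1+|x|^2+W^{(2)}(\mu_t,\delta_0)^2)$ for the It\^o correction term; the Burkholder--Davis--Gundy inequality handles the stochastic integral so that the $\sup_{t}$ can be moved inside. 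This yields
\begin{align*}
\bE\Big[\sup_{s\le t}|X^\mu_s|^{\widehat m}\Big] \le C\Big(1 + \bE[|X_0|^{\widehat m}]\Big) + C\int_0^t \Big(\bE\big[\sup_{r\le s}|X^\mu_r|^{\widehat m}\big] + W^{(2)}(\mu_s,\delta_0)^{\widehat m}\Big)\, \dd s,
\end{align*}
and Gr\"onwall's lemma gives the stated exponential bound once one checks that the fixed point $\mu=\mu^X$ satisfies $W^{(2)}(\mu^X_s,\delta_0)^{\widehat m} = \bE[|X_s|^{\widehat m}]$, closing the estimate self-consistently. The moment assumption $m>2(q+1)$ is what guarantees all the polynomial-in-$X$ terms produced by It\^o's formula on the superlinear drift are integrable, so this exponent threshold must be tracked carefully through the computation.

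Third, to close the contraction for uniqueness I would take two flows $\mu,\nu$, write the equation for $X^\mu - X^\nu$, apply It\^o to $|X^\mu_t-X^\nu_t|^2$, and use the one-sided Lipschitz in space of $v+b$ (so the bad superlinear term enters with a favourable sign and produces only $C|X^\mu_t-X^\nu_t|^2$), the Lipschitz-in-measure bounds $|u(x,\mu)-u(x,\nu)|^2 \le L_{\tilde u} W^{(2)}(\mu,\nu)^2$, the translation-invariance/linearity of the convolution term in $\mu$ (which is automatically Lipschitz in $W^{(2)}$ with constant controlled by a moment of $\mu$ — here the uniform moment bound from step two is used), and $(\mathbf A^b)$, $(\mathbf A^\sigma)$. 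Gr\"onwall then gives $\sup_{s\le t}\bE[|X^\mu_s-X^\nu_s|^2] \le C\int_0^t W^{(2)}(\mu_s,\nu_s)^2\,\dd s$, and iterating this in time (or using the time-weighted norm) makes $\Phi$ a contraction, hence a unique fixed point, which is the unique solution in $\bS^m$.

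The main obstacle I anticipate is handling the superlinear convolution term $\int f(x-y)\mu(\dd y)$ in a way that is uniform over the measure flow: unlike the space variable, where monotonicity saves the day, the dependence on $\mu$ of $v$ is only Lipschitz in $W^{(2)}$ \emph{with a constant that grows with the moments of $\mu$} (because $\nabla f$ is only polynomially bounded). So the contraction and the moment estimate are genuinely coupled — one needs the a priori uniform-in-iteration moment bound of step two to be in force before the Lipschitz-in-measure estimate of step three is usable. Organising the argument so these two estimates are not circular (e.g.\ by first establishing the moment bound on the iterates $X^{(k)}$ uniformly in $k$, then running the contraction on a ball of flows with bounded second moment) is the delicate point; everything else is a routine, if lengthy, application of It\^o's formula, BDG and Gr\"onwall. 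Since the statement is quoted verbatim as Theorem 3.5 of \cite{adams2020large}, I would in practice simply cite that reference, but the sketch above is how the proof goes.
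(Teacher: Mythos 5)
Your proposal is correct and, in its conclusion, coincides with what the paper actually does: the proof of Theorem \ref{Thm:MV Monotone Existence} given in the text consists solely of observing that Assumption \ref{Ass:Monotone Assumption} is a particularisation of \cite[Assumption 3.4]{adams2020large} and invoking \cite[Theorem 3.5]{adams2020large} directly. Your sketch of the underlying fixed-point plus It\^o--Gr\"onwall argument (including the correctly identified subtlety that the Lipschitz-in-measure constant of the convolution term depends on a priori moment bounds, so the contraction and the moment estimate must be organised non-circularly) is a reasonable account of how the cited reference establishes the result, but none of that argument is reproduced in this paper.
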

\begin{proof}
Our Assumption \ref{Ass:Monotone Assumption} is a particularisation of  \cite[Assumption 3.4]{adams2020large} and hence our theorem follows directly from  \cite[Theorem 3.5]{adams2020large}. 
\end{proof}

\textbf{The interacting particle system \eqref{Eq:MV-SDE Propagation}.} 
As mentioned earlier, the numerical approximation results of this work apply directly if either one's starting point is the interacting particle system \eqref{Eq:MV-SDE Propagation} or if one's starting point is the MV-SDE  \eqref{Eq:General MVSDE}. On the latter, one can approximate the MV-SDE \eqref{Eq:General MVSDE} (driven by the Brownian motion $W$) by the $N$-dimensional system $\bR^d$-valued interacting particle system given in \eqref{Eq:MV-SDE Propagation} and approximate it numerically with the gap closed by the Propagation of Chaos  \cite{2021SSM,reisinger2020adaptive,reis2018simulation}.

For completeness we recall the setup of  \eqref{Eq:MV-SDE Propagation}. Let $i\in \llbracket 1,N\rrbracket$ and consider $N$ particles $(X^{i,N})_{t\in[0,T]}$ with independent and identically distributed (i.i.d.) initial conditions ${X}_{0}^{i,N}=X_{0}^{i}$ and satisfying the $(\bR^d)^N$-valued SDE \eqref{Eq:MV-SDE Propagation} (with $v$ given in \eqref{Eq:General MVSDE shape of v})
\begin{align*}
% \label{Eq:MV-SDE Propagation}
\dd {X}_{t}^{i,N} 
= \big( v (X_t^{i,N}, \mu^{X,N}_{t}  )+ b (t,{X}_{t}^{i,N}, \mu^{X,N}_{t}  )\big) \dd t 
+ \sigma (t,{X}_{t}^{i,N} , \mu^{X,N}_{t} ) \dd W_{t}^{i}
, \quad X^{i,N}_0=X_0^i, 
\end{align*}
where $\mu^{X,N}_{t}(\dd x) := \frac{1}{N} \sum_{j=1}^N \delta_{X_{t}^{j,N}}(\dd x)$ with $\delta_{{X}_{t}^{j,N}}$ being the Dirac measure at point ${X}_{t}^{j,N}$, and  $W^{i}, i\in \llbracket 1,N\rrbracket$ being independent Brownian motions (also independent of the BM $W$ appearing in \eqref{Eq:General MVSDE}; with a slight abuse of notation to  avoid re-defining the probability space's filtration).

\begin{remark}[The system through the lens of $\bR^{Nd}$] 
\label{remark:OSL for the whole function / system V}
We introduce the map $V$ to interpret \eqref{Eq:MV-SDE Propagation} as one system of equations in $\bR^{Nd}$ instead of $N$ dependent equations each in $\bR^d$. Namely, we define for $v$ given by \eqref{Eq:General MVSDE shape of v}, 
\begin{align}
\label{eq:Define-Function-V}
 &  V=(V_1,\cdots,V_N):(\bR^{d})^N\to (\bR^{d})^N 
 \ \textrm{ where for }\ i\in \llbracket 1,N\rrbracket \ \ V_i:(\bR^{d})^N\to \bR^{d},  
 \quad
 V_i(X^N)= v(X^{i,N}, \mu^{X,N}), 
\end{align}
and $X^N=(X^{1,N},\cdots,X^{N,N} )\in \bR^{Nd}$ where each $X^{i,N}$ solves \eqref{Eq:MV-SDE Propagation}, $\mu^{X,N}(\dd x) := \frac{1}{N} \sum_{j=1}^N \delta_{X^{j,N}}(\dd x) $.  

For $X^N,Y^N\in \bR^{Nd}$ with corresponding measure $\mu^{X,N},\mu^{Y,N} $ and letting Assumption \ref{Ass:Monotone Assumption} hold, the function $V$ also satisfies a one-sided Lipschitz condition 
\begin{align*}
    \langle& X^N-Y^N,V(X^N)-V(Y^N) \rangle 
    \\
    &
    =\frac{1}{2N}\sum_{i=1}^N\sum_{j=1}^N \Big\langle (X^{i,N}-X^{j,N})-(Y^{i,N}-Y^{j,N})
    % +(X^{j,N}-Y^{j,N} )
    , f(X^{i,N}- X^{j,N})-f(Y^{i,N}- Y^{j,N})\Big\rangle
    \\
    &\quad + 
    \sum_{i=1}^N \Big\langle X^{i,N}-Y^{i,N}, 
    u(X^{i,N}, \mu^{X,N})-u(Y^{i,N}, \mu^{X,N})+ u(Y^{i,N}, \mu^{X,N}) -u(Y^{i,N}, \mu^{Y,N})
    \Big\rangle
    \\
    & \qquad
    \le (2L_f^++L_u+\frac{1}{2}+\frac{L_{\tilde{u} }}{2})|X^N-Y^N|^2, \qquad L_f^+=\max\{0,L_f\}.
\end{align*}
In the last second step we changed the order of summation and used that $f$ is odd. 

\end{remark}

\textbf{Propagation of chaos (PoC).} 
In order to show that the particle approximation \eqref{Eq:MV-SDE Propagation} is of effective use to approximate the MV-SDE \eqref{Eq:General MVSDE}, we provide  a pathwise propagation of chaos result (convergence as the number of particles increases and with rate). 
We introduce the auxiliary system of non interacting particles  
\begin{align}
	\label{Eq:Non interacting particles}
	\dd X_{t}^{i} = 
	\big( 
	v(X_{t}^{i}, \mu^{X^{i}}_{t})+ b(t, X_{t}^{i}, \mu^{X^{i}}_{t}) \big)\dd t + \sigma(t,X_{t}^{i}, \mu^{X^{i}}_{t}) \dd W_{t}^{i}, \quad X_{0}^{i}=X_{0}^{i} \, ,\quad t\in [0,T] \, ,
\end{align}
which are just (decoupled) MV-SDEs with i.i.d. initial conditions  $X_{0}^{i}$. Since the $X^{i}$s are
independent, $\mu^{X^{i}}_{t}=\mu^{X}_{t}$ for all $
i$ (and $\mu^{X}_{t}$ the law of the solution to \eqref{Eq:General MVSDE} with $v$ given as \eqref{Eq:General MVSDE shape of v}). 
% We are interested in the strong error-type metrics for the numerical approximation, hence the relevant PoC result for our case if given in the next proposition.  

The Propagation of chaos result \eqref{eq:poc result} follows from \cite[Theorem 3.14]{adams2020large} under the assumption that the interacting particle system \eqref{Eq:MV-SDE Propagation} is well-posed. The first statement of Proposition \ref{Prop:Propagation of Chaos} establishes the well-posedness of the particle system hence closing the small gap left in  \cite[Theorem 3.14]{adams2020large}. 
\begin{proposition}
	\label{Prop:Propagation of Chaos}
	Let the assumptions of Theorem \ref{Thm:MV Monotone Existence} hold for some $m>2(q+1)$. 
	Then, for all $i\in \llbracket 1,N \rrbracket$ there exists a unique solution $X^{i,N}$ to \eqref{Eq:MV-SDE Propagation} in $\bS^m([0,T])$  and for any $1\leq p\leq m$ there exists $C>0$ independent of $N$ (but depending on $T$ and $m$) such that 
\begin{align}
\label{eq:momentboundParticiInteractingSystem}
    \sup_{t\in[0,T]}\sup_{ i\in \llbracket 1,N \rrbracket}  \bE\big[  |X^{i,N}_t|^p \big] \leq C\Big(1+\bE\big[\,|X_0^\cdot|^p \big]\Big).
\end{align}
For $i\in \llbracket 1,N \rrbracket$, let $X^{i}\in \bS^m([0,T])$ be the solution to \eqref{Eq:Non interacting particles}, ensured by Theorem \ref{Thm:MV Monotone Existence}. Suppose additionally that $m>\max\{ 2(q+1) , 4\} $. Then, there exists a constant $C>0$ independent of $N$ (but depending on $T$ and $m$) such that
	\begin{align}
	\label{eq:poc result}
	\sup_{ i\in \llbracket 1,N \rrbracket}\sup_{0 \le t \le T} 
	\bE\big[ |X_{t}^{i} - X_{t}^{i,N}|^{2}\big] 
	\le C  \begin{cases}N^{-1 / 2}, & d<4 \\ N^{-1 / 2} \log N, & d=4 \\ N^{\frac{-2}{d+4}}, & d>4\end{cases}.
	\end{align}
\end{proposition}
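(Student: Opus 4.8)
The plan is to establish the two claims separately but using the same toolbox of one-sided Lipschitz estimates, It\^o's formula, and Gronwall's lemma. \emph{First, wellposedness and the moment bound \eqref{eq:momentboundParticiInteractingSystem}.} I would treat the $N$-particle system as one $\bR^{dN}$-valued SDE with drift $X^N\mapsto V(X^N)+B(t,X^N)$, where $B$ stacks the $b$-terms and $V$ is the map from Remark \ref{remark:OSL for the whole function / system V}. Local existence up to an explosion time follows from the local Lipschitz property of $v$ (hence of $V$) together with the global Lipschitz property of $b$ and $\sigma$. To rule out explosion and simultaneously obtain uniform-in-$N$ moments, apply It\^o's formula to $t\mapsto |X^{i,N}_t|^p$ (or to $(1+|X^{i,N}_t|^2)^{p/2}$ for a smooth version), sum over $i$ and divide by $N$. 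The drift contribution $\langle X^{i,N}, v(X^{i,N},\mu^{X,N})+b(t,X^{i,N},\mu^{X,N})\rangle$ is controlled using the one-sided/Young-type bounds collected in Remark \ref{remark:ImpliedProperties}: crucially $\langle x, f\text{-convolution part}\rangle$ is handled by the oddness of $f$ and $\langle x, f(x)\rangle\le L_f|x|^2$, which after averaging over the empirical measure produces only terms like $|x|^2$ and $W^{(2)}(\mu^{X,N},\delta_0)^2=\frac1N\sum_j|X^{j,N}|^2$; the latter, once summed over $i$, again reduces to an average of $|X^{j,N}|^2$. The diffusion and It\^o-correction terms are bounded by $C(1+|X^{i,N}|^2+\frac1N\sum_j|X^{j,N}|^2)$ via the linear-growth bounds on $\sigma$. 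After a localisation (stopping times $\tau_R$) and taking expectations the martingale part vanishes, Gronwall closes the estimate with a constant independent of $N$, and Fatou gives \eqref{eq:momentboundParticiInteractingSystem}; non-explosion follows by the standard argument that the $p$-th moments stay finite. Uniqueness is the one-sided Lipschitz estimate of Remark \ref{remark:OSL for the whole function / system V} plus Gronwall.

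\emph{Second, the PoC rate \eqref{eq:poc result}.} Here I would couple $X^{i,N}$ with the decoupled copy $X^i$ solving \eqref{Eq:Non interacting particles}, both driven by the same $W^i$. Writing $e^i_t:=X^i_t-X^{i,N}_t$, apply It\^o to $|e^i_t|^2$. The drift difference splits into $v(X^i,\mu^{X}_t)-v(X^{i,N},\mu^{X,N}_t)$ and the analogous $b$-difference; insert the intermediate term $v(X^{i,N},\mu^{X}_t)$ (resp.\ for $b$). The ``space'' increments $\langle e^i, v(X^i,\mu^X)-v(X^{i,N},\mu^X)\rangle$ and $\langle e^i, b(\cdot,X^i,\mu^X)-b(\cdot,X^{i,N},\mu^X)\rangle$ are bounded by $C|e^i|^2$ using the one-sided Lipschitz property of $v$ in space (Remark \ref{remark:ImpliedProperties}) and $(\mathbf A^b)$. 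The ``measure'' increments are bounded by $C\,W^{(2)}(\mu^{X}_t,\mu^{X,N}_t)^2$ using the Lipschitz-in-measure parts $(\mathbf A^b)$, $(\mathbf A^\sigma)$, $(\mathbf A^u)$ and, for the convolution part of $v$, the local-Lipschitz bound on $f$ combined with the a priori moment bounds on $X^i$ and $X^{i,N}$ (this is where $m>2(q+1)$ is used, to absorb the polynomial weights $1+|X^i|^{q}+|X^{i,N}|^{q}$ by H\"older). Now split $W^{(2)}(\mu^X_t,\mu^{X,N}_t)^2 \le 2W^{(2)}(\mu^X_t,\mu^{X^N}_t)^2 + 2W^{(2)}(\mu^{X^N}_t,\mu^{X,N}_t)^2$ where $\mu^{X^N}_t$ is the empirical measure of the \emph{decoupled} particles $X^i$; the second term is $\le \frac1N\sum_j |e^j_t|^2$ by the obvious coupling, and the first term has expectation bounded by the classical Monte-Carlo/Fournier--Guillin rate \cite{fournier2014propagation} giving exactly the three-regime bound in $N$, uniformly in $t\le T$ (requiring $m>4$, the $2+\varepsilon$ moments needed there, hence the hypothesis $m>\max\{2(q+1),4\}$). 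Taking expectations, summing over $i$, dividing by $N$ and exploiting exchangeability so that $\sup_i\bE|e^i_t|^2$ is comparable to $\frac1N\sum_i\bE|e^i_t|^2$, Gronwall's lemma yields \eqref{eq:poc result}.

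\emph{Main obstacle.} The delicate point is the treatment of the convolution term $\frac1N\sum_j f(X^{i,N}-X^{j,N})$ under \emph{super-linear} growth of $f$. For the moment bound this is what forces the careful use of oddness so that no uncontrolled $\frac1{N^2}\sum_{i,j}|X^{i,N}-X^{j,N}|^{q+1}$-type terms survive; only the benign $\langle x,f(x)\rangle\le L_f|x|^2$ remains. For the PoC estimate, bounding the measure-difference of the convolution term requires pairing the local-Lipschitz constant $L_{\hat f}(1+|X^i|^{q_2}+|X^{i,N}|^{q_2})$ with $W^{(2)}$ and then using H\"older together with the uniform-in-$N$ moment bound \eqref{eq:momentboundParticiInteractingSystem} and Theorem \ref{Thm:MV Monotone Existence}; getting the exponents to match (so that $m>2(q+1)$ suffices) is the technical heart. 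Everything else is a routine Gronwall argument built on Remark \ref{remark:ImpliedProperties} and Remark \ref{remark:OSL for the whole function / system V}.
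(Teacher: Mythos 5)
Your plan for the uniform-in-$N$ moment bound \eqref{eq:momentboundParticiInteractingSystem} has a genuine gap for $p>2$, and $p>2$ is exactly the range the proposition is needed for (the convergence proofs use $m\ge 4q+4$). You propose to apply It\^o's formula to $|X^{i,N}_t|^p$, sum over $i$, and kill the convolution term by oddness of $f$ via $\langle x,f(x)\rangle\le L_f|x|^2$. But It\^o's formula for $p>2$ produces the \emph{weighted} drift term $|X^{i,N}|^{p-2}\langle X^{i,N}, \frac1N\sum_j f(X^{i,N}-X^{j,N})\rangle$, and after symmetrising the double sum you are left with
\begin{align*}
\frac{1}{2N}\sum_{i,j}\Big\langle \,|X^{i,N}|^{p-2}X^{i,N}-|X^{j,N}|^{p-2}X^{j,N}\,,\ f(X^{i,N}-X^{j,N})\Big\rangle ,
\end{align*}
in which the test vector is no longer $X^{i,N}-X^{j,N}$; the one-sided Lipschitz property of $f$ gives nothing, and falling back on the growth bound $|f(x)|\lesssim 1+|x|^{q+1}$ produces a term of order $|X|^{p+q}$, so Gr\"onwall does not close at the level of $p$-th moments. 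This is precisely the obstruction the paper identifies ("the moment bound of the $\bR^{Nd}$ system cannot lead to \eqref{eq:momentboundParticiInteractingSystem}"), and its resolution is the missing idea in your plan: decompose
\begin{align*}
\bE\big[|X^{i,N}_t|^{2p}\big]\ \le\ 4^p\,\bE\Big[\tfrac1N\sum_{j}|X^{i,N}_t-X^{j,N}_t|^{2p}\Big]
+4^p\,\bE\Big[\Big|\tfrac1N\sum_{j}|X^{j,N}_t|^{2}\Big|^{p}\Big]+1,
\end{align*}
then apply It\^o separately to $|X^{i,N}-X^{j,N}|^{2p}$ (where the drift pairs as $\langle X^{i,N}-X^{j,N}, v(X^{i,N},\mu)-v(X^{j,N},\mu)\rangle$ and the one-sided Lipschitz bound of Remark \ref{remark:OSL for the whole function / system V} tolerates any scalar weight) and to the \emph{unweighted} average $\frac1N\sum_j|X^{j,N}|^2$ (where the oddness symmetrisation is legitimate), raising to the power $p$ only afterwards. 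Without this (or an equivalent device) your Gr\"onwall loop does not close. Your argument is fine for $p=2$ only.

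The remainder of your plan is sound but takes a different route from the paper on the PoC rate: the paper simply invokes \cite[Theorem 3.14]{adams2020large} for \eqref{eq:poc result} once wellposedness is secured, whereas you reprove it by synchronous coupling, splitting $W^{(2)}(\mu^X_t,\mu^{X,N}_t)$ through the empirical measure of the decoupled particles and using the Fournier--Guillin rate. That is the standard argument and is what underlies the cited result; it buys self-containedness at the cost of having to track carefully the Cauchy--Schwarz pairing of the polynomial weights $1+|X^i|^q+|X^{i,N}|^q$ against the Wasserstein term, which you correctly flag as the delicate point. The wellposedness step (one-sided Lipschitz of $V$ on $\bR^{dN}$) matches the paper.
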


 The proof and further details are presented in Appendix \ref{appendix: proof of moment bound for PC}. 
This result shows that the particle scheme will converge to the MV-SDE with a given quantified rate. Therefore, to show convergence between our numerical scheme and the MV-SDE, we only need to show that the numerical version of the particle scheme converges to the ``true'' particle scheme in a way that is independent of $N$. We note that the PoC rate can be optimised for the case of constant diffusion \cite[Remark 2.5]{2021SSM}.

%
%
%
%
%
%%%%%%%%%%%%%%%%%%%%%%%%%%%%%%%%%%%%%%%%%%
\subsection{The scheme for the interacting particle system and main results}
\label{section:SSM scheme and main results.}
The split-step method (SSM) here is inspired by that of \cite{2021SSM} and re-cast accordingly to the setup here. The critical difficulty arises from the convolution component in $v$  \eqref{Eq:General MVSDE}. This term is the main hindrance in proving moment bounds. Before continuing recall the definition of $V$ in Remark \ref{remark:OSL for the whole function / system V}. We now introduce the SSM numerical scheme. 
\begin{definition}[Definition of the SSM]
\label{def:definition of the ssm}
Let Assumption \ref{Ass:Monotone Assumption} hold. 
Define the uniform partition of $[0,T]$ as $\pi:=\{t_n:=nh : n\in \llbracket 0,M\rrbracket, h:=T/M \}$ for a prescribed $M\in \bN\setminus\{0\}$. 
Define recursively the SSM approximating \eqref{Eq:MV-SDE Propagation} as: set $\hx_{0}^{i,N}=X^i_0$ for $i\in \llbracket 1,N\rrbracket $; iteratively over  $n\in \llbracket 0,M-1\rrbracket$ for all $i\in \llbracket 1,N\rrbracket $ (recall Remark \ref{remark:OSL for the whole function / system V} and the definition of the map $V$ in \eqref{eq:Define-Function-V})\color{black}
\begin{align}
Y_{n}^{\star,N} &=\hat{ X}_{n}^{N}+h V (Y_{n}^{\star,N} ),
\quad  \hat{ X}_{n}^{N}=(\cdots,\hx_{n}^{i,N},\cdots),\quad Y_{n}^{\star,N}=(\cdots,Y_{n}^{i,\star,N},\cdots),
\label{eq:SSTM:scheme 0}
\\
\label{eq:SSTM:scheme 1}
&\textrm{where }~Y_{n}^{i,\star,N} =\hx_{n}^{i,N}+h v (Y_{n}^{i,\star,N},\hm^{Y,N}_n ),  
 \quad 
 \quad 
  \hm^{Y,N}_n(\dd x):= \frac1N \sum_{j=1}^N \delta_{Y_{n}^{j,\star,N}}(\dd x),
 \\
\label{eq:SSTM:scheme 2}
\hx_{n+1}^{i,N} &=Y_{n}^{i,\star,N}
            + b(t_n,Y_{n}^{i,\star,N},\hm^{Y,N}_n) h
            +\sigma(t_n,Y_{n}^{i,\star,N},\hm^{Y,N}_n) \Delta W_{n}^i,\qquad \Delta W_{n}^i=W_{t_{n+1}}^i-W_{t_n}^i.
\end{align}
The stepsize $h$ is chosen as to belong to the interval (this constraint is soft in the sense of Remark \ref{rem:Constraint on h is soft}) 
\begin{align}
\label{eq:h choice}
h\in \Big(0, \min\big\{1,\frac 1\zeta\big\} \Big)
\quad \textrm{for $\zeta$ defined as}\quad  
\zeta= \max\Big\{ 2(L_f+L_u)~,~4L_f^++2L_u+2 L_{\tilde{u} }	+1~,~0 \Big\}.
\end{align}
\end{definition}

In some cases where the original functions $f,u$ might cause trouble to find a suitable choice of $h$, and by the Remark below, we can use the addition and subtraction trick to bypass the constraint, see Remark \ref{remark: choice of h} and \cite[Section 3.4]{2021SSM} for more discussion.
\color{black}
\begin{remark}[The constraint on $h$ in \eqref{eq:h choice} is soft]
\label{rem:Constraint on h is soft}
Our framework allows to change $f,u,b$ in such a way as to have $\zeta=0$ in \eqref{eq:h choice} via addition and subtraction of linear terms to $f,u$ and $b$. Concretely, take $\theta,\gamma \in \bR$ and redefine $f,u,b$ into $\widehat f, \widehat u, \widehat b$ as follows: for any $t\in [0,\infty),x\in\bR^d, \mu\in \cP_2(\bR^d)$
\begin{align*}
\widehat f (x) = f(x) -\theta x, 
\qquad 
\widehat u(x,\mu) = u(x,\mu) -\gamma x - \theta \int_{\bR^d} z \mu(\dd z),
\quad \textrm{and}\quad 
\widehat b(t,x,\mu) = b(t,x,\mu) +(\gamma + \theta) x.
\end{align*}
For judicious choices of $\theta,\gamma$ it is easy to see that $\zeta$ can be set to be zero (we invite the reader to carry out the calculations). We remark that this operation increases the Lipschitz constant of $\widehat b$.
\end{remark}
Recall that the function $V$ satisfies a one-sided Lipschitz condition in $X\in\bR^{Nd}$ (Remark \ref{remark:OSL for the whole function / system V}), and hence (under \eqref{eq:h choice}) a unique solution $Y_{n}^{\star,N}$ to \eqref{eq:SSTM:scheme 0} as a function of $\hat{ X}_{n}^{N}$ exists (details in Lemma \ref{lemma:SSTM:new functions def and properties1}).  
After introducing the discrete scheme, we define its continuous extension and provide the main convergence results.
\begin{definition} [Continuous extension of the SSM]
\label{def:definition of the ssm continouse extension}
Under the same choice of $h$ and assumptions in Definition \ref{def:definition of the ssm}, for all $t\in[t_n,\tnp]$, $n\in \llbracket 0,M-1\rrbracket$ ,  $i\in\llbracket 1,N \rrbracket$, $\hx_{0}^{i,N}=X^i_0 \in L_0^m(\mathbb{R}^d)$, the continuous extension of the SSM is 
\begin{align}
\label{eq: scheme continous extension in SDE form}
    \dd \hat{X}_{t}^{i,N}
    &
    =
    \big( v (Y_{\kappa(t)}^{i,\star,N},\hm^{Y,N}_{\kappa(t)} )
    +b(\kappa(t),Y_{\kappa(t)}^{i,\star,N},\hm^{Y,N}_{\kappa(t)} )  \big) \dd t 
    + \sigma (\kappa(t),Y_{\kappa(t)}^{i,\star,N},\hm^{Y,N}_{\kappa(t)} ) \dd W_t^i,
    \\
    \nonumber
    \quad 
   \hm^{Y,N}_n(\dd x):&= \frac1N \sum_{j=1}^N \delta_{Y_{n}^{j,\star,N}}(\dd x),~\quad
    \kappa(t)=\sup\big\{t_n: t_n\le t,\ n\in \llbracket 0,M-1 \rrbracket \big\}\nonumber,~\quad
     \hm^{Y,N}_{t_n}=\hm^{Y,N}_n.
\end{align}
\end{definition}
The next result states our first strong convergence finding. It is a ``strong'' pointwise (non-path-space) convergence result that is not in the classical mean-square error form. 
\begin{theorem} [Non-path-space mean-square convergence]
\label{theorem:SSM: strong error 1}
Let Assumption \ref{Ass:Monotone Assumption} hold and choose $h$ as in \eqref{eq:h choice}.  
Let $i\in\llbracket 1,N\rrbracket$, take $X^{i,N}$ as the solution to \eqref{Eq:MV-SDE Propagation} and let $\hx^{i,N}$ be the continuous-time extension of the SSM given by \eqref{eq: scheme continous extension in SDE form}.  
If $m\ge 4q+4 >\max\{2(q+1),4\}$, where $X_0^i\in L^m_0(\bR^d)$ and $q$ is as defined in Assumption \ref{Ass:Monotone Assumption}, then   there exists a constant $C>0$ independent of $h,N,M$ (but depending on $T$ and $m$) such that
\color{black}
\begin{align}
\label{eq:convergence theroem term 1}
     \sup_{i\in \llbracket 1,N \rrbracket} \sup_{0\le t \le T}
  \bE\big[\,  |X_{t}^{i,N}-\hx_{t}^{i,N} |^2 \big]    &   \le C h.
\end{align} 
\end{theorem}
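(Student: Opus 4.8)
~The plan is to obtain a Gr\"onwall inequality for
\[
\phi(t):=\frac1N\sum_{i=1}^N\bE\big[\,|X_t^{i,N}-\hx_t^{i,N}|^2\big]=\sup_{i\in\llbracket 1,N\rrbracket}\bE\big[\,|X_t^{i,N}-\hx_t^{i,N}|^2\big],
\]
the last equality holding by exchangeability of the particles, so that a bound $\phi(t)\le Ch$ is exactly \eqref{eq:convergence theroem term 1}. It is convenient to stack the particles into $\bR^{Nd}$-valued processes $X^N,\hx^N,Y^{\star,N}$ and to use the map $V$ of Remark~\ref{remark:OSL for the whole function / system V}, so that $\phi(t)=\tfrac1N\bE|X^N_t-\hx^N_t|^2$ and, by \eqref{eq:SSTM:scheme 0}, $\hx^N_{t_n}=Y^{\star,N}_n-h\,V(Y^{\star,N}_n)$. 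The first block of the proof gathers the purely moment-based facts: from Theorems~\ref{Thm:MV Monotone Existence} and~\ref{theorem：moment bound for the big theorm time extensions } and Proposition~\ref{Prop:Propagation of Chaos}, under $m\ge 4q+4$ all of $\sup_n\bE|Y^{i,\star,N}_n|^p$, $\sup_t\bE|\hx^{i,N}_t|^p$, $\sup_t\bE|X^{i,N}_t|^p$ are finite and uniform in $N$ for $p$ up to $4q+4$. Combined with the polynomial-growth bound $|v(x,\mu)|\le C\big(1+|x|^{q+1}+\int_{\bR^d}(1+|y|^{q+1})\mu(\dd y)\big)$ and the local-Lipschitz bound for $V$ (both consequences of $(\mathbf A^u)$–$(\mathbf A^f)$ as in Remark~\ref{remark:ImpliedProperties}), this gives the two consistency estimates $\tfrac1N\bE|\hx^N_{t_n}-Y^{\star,N}_n|^2=h^2\tfrac1N\bE|V(Y^{\star,N}_n)|^2\le Ch^2$ (with the matching $\bE|\hx^{i,N}_{t_n}-Y^{i,\star,N}_n|^4\le Ch^2$) and the $\tfrac12$-H\"older-in-$L^2$ bound $\sup_{t\in[t_n,t_{n+1}]}\tfrac1N\bE|\hx^N_t-\hx^N_{t_n}|^2\le Ch$ (again with $\bE|\hx^{i,N}_t-\hx^{i,N}_{t_n}|^4\le Ch^2$), the drift contributing the higher power of $h$ and the diffusion the lower. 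All constants here are independent of $h,M,N$.

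Next I apply It\^o's formula to $\tfrac1N|X^N_t-\hx^N_t|^2$; the stochastic integral has zero expectation by the moment bounds, $\phi(0)=0$ since $\hx^i_0=X^i_0$, and so $\phi(t)$ is the time-integral of $\tfrac1N\sum_i\bE$ of $2\langle X^{i,N}_s-\hx^{i,N}_s,\,[v\text{-diff}]+[b\text{-diff}]\rangle+|[\sigma\text{-diff}]|^2$, where ``diff'' compares the datum at $(s,X^{i,N}_s,\mu^{X,N}_s)$ with that at $(\ks,Y^{i,\star,N}_{\ks},\hm^{Y,N}_{\ks})$. The $b$- and $\sigma$-terms are handled by $(\mathbf A^b)$, $(\mathbf A^\sigma)$ and the $\tfrac12$-H\"older-in-time hypothesis, together with $W^{(2)}(\mu^{X,N}_s,\hm^{Y,N}_{\ks})^2\le\tfrac1N|\Delta^N_s|^2$ where $\Delta^N_s:=X^N_s-Y^{\star,N}_{\ks}$, and contribute $\le C(h+\tfrac1N\bE|\Delta^N_s|^2)$. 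For the $v$-term, the sum over $i$ reassembles into $\tfrac1N\langle X^N_s-\hx^N_s,V(X^N_s)-V(Y^{\star,N}_{\ks})\rangle$; I split $X^N_s-\hx^N_s=\Delta^N_s+R^N_s$ with $R^N_s:=Y^{\star,N}_{\ks}-\hx^N_s$, the key point being that $R^N_{t_n}=h\,V(Y^{\star,N}_n)$ by \eqref{eq:SSTM:scheme 0}, so that by the first block $\tfrac1N\bE|R^N_s|^2\le Ch$ and $\bE|R^{i,N}_s|^4\le Ch^2$. Then
\[
\langle X^N_s-\hx^N_s,V(X^N_s)-V(Y^{\star,N}_{\ks})\rangle=\langle\Delta^N_s,V(X^N_s)-V(Y^{\star,N}_{\ks})\rangle+\langle R^N_s,V(X^N_s)-V(Y^{\star,N}_{\ks})\rangle,
\]
where the first term falls \emph{exactly} under the one-sided Lipschitz estimate of Remark~\ref{remark:OSL for the whole function / system V} (its measure arguments being the empirical measures of the two stacked points) and is $\le C_V|\Delta^N_s|^2\le 2C_V(|X^N_s-\hx^N_s|^2+|R^N_s|^2)$, while the second term is bounded, via Cauchy--Schwarz, the local-Lipschitz bound $|V(X^N_s)-V(Y^{\star,N}_{\ks})|\le G_s|\Delta^N_s|$ with $G_s$ a polynomial-moment factor, and Young's inequality, by $\varepsilon\,\tfrac1N|\Delta^N_s|^2+C_\varepsilon\,\tfrac1N G_s^2|R^N_s|^2$; its expectation, by one more Cauchy--Schwarz across the product, is $\le\varepsilon\,\tfrac1N\bE|\Delta^N_s|^2+C_\varepsilon h$ (this last step consumes $\bE[G_s^4]\le C$, which needs moments of order $4q$, and $\bE|R^{i,N}_s|^4\le Ch^2$, which needs the fourth moment of $v$, i.e.\ order $4q+4$).

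Finally, substituting $\tfrac1N\bE|\Delta^N_s|^2\le 2\phi(s)+2\tfrac1N\bE|R^N_s|^2\le 2\phi(s)+Ch$ everywhere collapses all the above into $\phi(t)\le\int_0^t(C\phi(s)+Ch)\,\dd s$, and Gr\"onwall's lemma yields $\phi(t)\le CT e^{CT}h$, which is \eqref{eq:convergence theroem term 1} with a constant independent of $h,M,N$.

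The main obstacle is the superlinear, convolution-type drift. Because $v$, and hence $V$, are only locally Lipschitz, the consistency remainder $\langle R^N_s,V(X^N_s)-V(Y^{\star,N}_{\ks})\rangle$ and the measure-continuity contributions cannot be absorbed into $|X^N_s-\hx^N_s|^2$ alone: they must be split by Cauchy--Schwarz into a small $L^2$-factor of order $\sqrt h$ times a large polynomial-moment factor. This closes only because (a) the split-step structure forces $R^N=h\,V(Y^{\star,N})$, so the remainder is genuinely $O(h)$ in $L^2$ rather than $O(1)$, and (b) the hypothesis $m\ge 4q+4$ provides precisely the moment budget, \emph{uniformly in $N$}, consumed by each such Cauchy--Schwarz step. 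Establishing those $N$-uniform moment bounds for the fully coupled implicit stage $Y^{\star,N}$ (Theorem~\ref{theorem：moment bound for the big theorm time extensions }) — where the convolution couples all $N$ equations into a single $\bR^{Nd}$ problem — is the heaviest technical input, on which everything else rests.
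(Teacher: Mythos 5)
Your proposal is correct in substance, but it is a genuinely different route from the paper's own proof of Theorem \ref{theorem:SSM: strong error 1}. The paper (Section \ref{subsection:Proof of strong error 1}) works interval-by-interval with a discrete stability/consistency decomposition in the spirit of \cite{2015ssmBandC}: it introduces the auxiliary implicit stage $Y_n^{i,X,N}=X_n^{i,N}+hv(Y_n^{i,X,N},\mu_n^{Y,X,N})$ driven by the \emph{exact} solution, splits the one-step error into $I_1,\dots,I_4$, exploits the algebraic identity $V_n^{Y,i}-V_n^{*,i}=h^{-1}\big(Y_n^{i,X,N}-X_n^{i,N}+Y_n^{i,\star,N}-\hx_n^{i,N}\big)$ to get the stability bound $I_1\le(1+Ch)\bE|X_n^{i,N}-\hx_n^{i,N}|^2$, and closes with a discrete Gr\"onwall argument; the point the authors emphasise is that this needs only $L^2$-moments of the scheme (higher moments are consumed only by the exact particle system through \eqref{eq:momentboundParticiInteractingSystem}), so the proof is independent of Theorem \ref{theorem：moment bound for the big theorm time extensions }. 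You instead apply It\^o's formula to the continuous error, split $X^N_s-\hx^N_s=(X^N_s-Y^{\star,N}_{\kappa(s)})+(Y^{\star,N}_{\kappa(s)}-\hx^N_s)$ so that the one-sided Lipschitz property of $V$ (Remark \ref{remark:OSL for the whole function / system V}) hits the first piece and the split-step identity $Y^{\star,N}_n-\hx^N_{t_n}=hV(Y^{\star,N}_n)$ makes the second piece $O(h)$ in $L^2$; this is essentially the machinery the paper deploys for the path-space Theorem \ref{theorem:SSM: strong error 2}, minus the supremum and hence minus the $\varepsilon$-loss. Your route is valid and not circular (the proof of Theorem \ref{theorem：moment bound for the big theorm time extensions } does not use Theorem \ref{theorem:SSM: strong error 1}), and it stays within the stated budget $m\ge 4q+4$ (e.g.\ $\bE|R_s^{i,N}|^4\le Ch^2$ needs fourth moments of $V_i(Y^{\star,N})$, i.e.\ order $4q+4$ moments of $Y^{\star,N}$, which Theorem \ref{theorem: discrete moment bound} and Proposition \ref{prop:SSTM: local increment error00} supply), but it does sacrifice the feature the authors advertise, namely that this particular theorem requires no $L^p$, $p>2$, moment control of the scheme. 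Two points to tighten if you write it out: the bound $|V(X^N_s)-V(Y^{\star,N}_{\kappa(s)})|\le G_s|\Delta^N_s|$ is not literally a scalar times the $\bR^{Nd}$-norm because of the convolution cross terms, so the remainder must be estimated as a double sum of products of three unbounded random variables and reduced via exchangeability and Proposition \ref{prop:auxilary for multipy of r.v.} (exactly as in \eqref{eq:se1: v-v inequality 2}); and the identity $\frac1N\sum_i\bE|X_t^{i,N}-\hx_t^{i,N}|^2=\sup_i\bE|X_t^{i,N}-\hx_t^{i,N}|^2$ deserves the one-line justification that the joint law of the coupled pair of systems is exchangeable in $i$.
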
 
The proof is presented in    Section \ref{subsection:Proof of strong error 1}.  
This result does not need $L^p$-moment bounds of the scheme for $p>2$. It needs \textit{only}  $L^p$-moments of the solution process of \eqref{Eq:MV-SDE Propagation} and $L^2$-moments for the scheme \cite{2015ssmBandC}. The proof takes advantage of the elegant structure induced by the SSM where Proposition \ref{prop:yi-yj leq xi-xj} and \ref{prop:sum y square leq sum x square} are the crucial intermediate results to deal with the convolution term.

The next moment bound result is necessary for the subsequent uniform convergence result. 
\begin{theorem}[Moment bounds]
\label{theorem：moment bound for the big theorm time extensions }
Let the setting of Theorem \ref{theorem:SSM: strong error 1} hold. 
Let $m\geq 2$ where $X^i_0\in L^m_0(\bR^d)$ for all $i\in\llbracket 1,N\rrbracket$ and let $\hx^{i,N}$ be the continuous-time extension of the SSM given by \eqref{eq: scheme continous extension in SDE form}. Let $2p\in [2,m]$, then there exists a constant $C>0$ independent of $h,N,M$   (but depending on $T$ and $m$)  such that  
\begin{align} 
\label{eq:SSTM:momentbound for split-step time extension process00}
         \sup_{i\in \llbracket 1,N \rrbracket} \sup_{0\le t \le T}\bE\big[ \,|\hx_{t}^{i,N}|^{2p}\big]\le C\big(  1+ \bE\big[\, |\hx_{0}^\cdot|^{2p}\big] \big) <\infty.
\end{align}
\end{theorem}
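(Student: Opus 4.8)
The plan is to reduce the claim to a uniform-in-$N$ moment bound for the \emph{discrete} iterates $\hx_n^{i,N}$ and $Y_n^{i,\star,N}$, and then to prove that bound by induction over the grid, the genuinely delicate point being the implicit sub-step \eqref{eq:SSTM:scheme 1}. Throughout one uses that the particle system is exchangeable (i.i.d.\ initial conditions and symmetric dynamics), so $\bE[|\hx_t^{i,N}|^{2p}]$ is automatically $i$-independent and it suffices to control the average over $i$.

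\emph{From the continuous extension to the discrete scheme.} The structural fact that makes the argument work is that \eqref{eq:SSTM:scheme 1} rewrites the one-step drift increment \emph{linearly}: $h\, v(Y_n^{i,\star,N},\hm^{Y,N}_n)=Y_n^{i,\star,N}-\hx_n^{i,N}$. Hence, for $t\in[t_n,t_{n+1}]$, the continuous extension \eqref{eq: scheme continous extension in SDE form} reads
\begin{align*}
\hx_t^{i,N}=\Big(1-\tfrac{t-t_n}{h}\Big)\hx_n^{i,N}+\tfrac{t-t_n}{h}\,Y_n^{i,\star,N}+(t-t_n)\,b(t_n,Y_n^{i,\star,N},\hm^{Y,N}_n)+\sigma(t_n,Y_n^{i,\star,N},\hm^{Y,N}_n)\big(W_t^i-W_{t_n}^i\big),
\end{align*}
i.e.\ $\hx_t^{i,N}$ is a sub-convex combination of $\hx_n^{i,N}$ and $Y_n^{i,\star,N}$ plus two Euler-type terms with at most linearly growing coefficients. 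Taking $|\cdot|^{2p}$, convexity gives $|(1-\lambda)\hx_n^{i,N}+\lambda Y_n^{i,\star,N}|^{2p}\le|\hx_n^{i,N}|^{2p}+|Y_n^{i,\star,N}|^{2p}$ for $\lambda\in[0,1]$, the drift and diffusion terms are handled by the linear growth bounds of Remark \ref{remark:ImpliedProperties}, by $W^{(2)}(\hm^{Y,N}_n,\delta_0)^{2p}\le\tfrac1N\sum_j|Y_n^{j,\star,N}|^{2p}$, and, conditionally on $\cF_{t_n}$, by the moments of the Gaussian increment $W_t^i-W_{t_n}^i$. Therefore it suffices to establish $\sup_{n}\sup_{i}\bE[|\hx_n^{i,N}|^{2p}]\le C(1+\bE[|\hx_0|^{2p}])$ with $C$ independent of $h,M,N$ (the corresponding bound for $Y_n^{i,\star,N}$ being produced along the way).

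\emph{The induction over $n$.} The Euler sub-step \eqref{eq:SSTM:scheme 2} is routine: expanding $|\hx_{n+1}^{i,N}|^{2p}$, taking $\bE[\,\cdot\mid\cF_{t_n}]$ so that the terms odd in $\Delta W_n^i$ drop, and using the linear growth of $b,\sigma$, yields $\bE[|\hx_{n+1}^{i,N}|^{2p}\mid\cF_{t_n}]\le(1+Ch)\,|Y_n^{i,\star,N}|^{2p}+Ch\big(1+\tfrac1N\sum_j|Y_n^{j,\star,N}|^{2p}\big)$. What remains is to bound $|Y_n^{i,\star,N}|^{2p}$ by $(1+Ch)|\hx_n^{i,N}|^{2p}$ plus lower-order terms, uniformly in $N$. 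For this one invokes two pathwise comparisons between the implicit stage and its input: (a) the $\bR^{dN}$ one-sided Lipschitz property of the lifted map $V$ (Remark \ref{remark:OSL for the whole function / system V}), which, tested against $Y_n^{\star,N}$ in \eqref{eq:SSTM:scheme 0} and combined with the choice \eqref{eq:h choice} of $h$, gives the $L^2$-type comparison $\tfrac1N\sum_i|Y_n^{i,\star,N}|^2\le(1+Ch)\tfrac1N\sum_i|\hx_n^{i,N}|^2+Ch$ (Proposition \ref{prop:sum y square leq sum x square}); and (b) the difference comparison $|Y_n^{i,\star,N}-Y_n^{j,\star,N}|^2\le C|\hx_n^{i,N}-\hx_n^{j,N}|^2$ (Proposition \ref{prop:yi-yj leq xi-xj}), obtained by subtracting the $j$-th from the $i$-th line of \eqref{eq:SSTM:scheme 1}, testing against $Y_n^{i,\star,N}-Y_n^{j,\star,N}$, and using the one-sided Lipschitz bound for $v$ in space from Remark \ref{remark:ImpliedProperties}. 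One then tests the system \eqref{eq:SSTM:scheme 1} against $|Y_n^{i,\star,N}|^{2p-2}Y_n^{i,\star,N}$ and sums over $i$: convexity of $y\mapsto\tfrac1{2p}|y|^{2p}$ makes the left-hand side dominate $\tfrac1{2p}\sum_i(|Y_n^{i,\star,N}|^{2p}-|\hx_n^{i,N}|^{2p})$, the $u$-contribution is absorbed by the quadratic one-sided bound of Remark \ref{remark:ImpliedProperties} together with (a), and the interaction contribution $\tfrac1N\sum_{i,j}|Y_n^{i,\star,N}|^{2p-2}\langle Y_n^{i,\star,N},f(Y_n^{i,\star,N}-Y_n^{j,\star,N})\rangle$ is symmetrised in $(i,j)$ using that $f$ is odd. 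After symmetrisation the super-linear growth of $f$ survives only through the pairing $\langle Y_n^{i,\star,N}-Y_n^{j,\star,N},f(Y_n^{i,\star,N}-Y_n^{j,\star,N})\rangle\le L_f|Y_n^{i,\star,N}-Y_n^{j,\star,N}|^2$ and through quantities which, by (b), are controlled in terms of the \emph{differences} $|\hx_n^{i,N}-\hx_n^{j,N}|$; a careful application of Young's inequality keeps every resulting term at moment order at most $2p$ and uniform in $N$. Combining the two sub-steps gives $\bE[|\hx_{n+1}^{i,N}|^{2p}]\le(1+Ch)\bE[|\hx_n^{i,N}|^{2p}]+Ch\big(1+\tfrac1N\sum_j\bE[|\hx_n^{j,N}|^{2p}]\big)$ uniformly in $i,N$; averaging over $i$ and applying the discrete Gronwall lemma over the $M=T/h$ grid points closes the induction, and the reduction above then finishes the proof.

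\emph{Main obstacle.} The hard step is the implicit sub-step — controlling the convolution term $\tfrac1N\sum_j f(Y_n^{i,\star,N}-Y_n^{j,\star,N})$ inside the $2p$-th moment estimate without the bound exploding in $N$ or in the moment order. A naive use of the local-Lipschitz bound on $f$ produces contributions of moment order $2p(q+1)$, whereas one must stay at order $2p$; this is exactly why the estimate has to be carried out on the lifted system in $\bR^{dN}$ (Remark \ref{remark:OSL for the whole function / system V}) and why it must simultaneously exploit the oddness of $f$ (through the symmetrisation above) and the difference comparison of Proposition \ref{prop:yi-yj leq xi-xj}. The bookkeeping that turns these cancellations into a closed, $N$-uniform recursion is the main technical content of the proof.
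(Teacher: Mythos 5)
Your reduction to a discrete-grid bound, the treatment of the Euler sub-step, and the identification of Propositions \ref{prop:yi-yj leq xi-xj} and \ref{prop:sum y square leq sum x square} as the essential inputs all match the paper. However, the central step of your argument --- testing the implicit equation \eqref{eq:SSTM:scheme 1} against $|Y_n^{i,\star,N}|^{2p-2}Y_n^{i,\star,N}$, summing over $i$, and symmetrising the interaction term --- contains a genuine gap. After symmetrisation using the oddness of $f$, the interaction contribution becomes
\begin{align*}
\frac{1}{2N}\sum_{i,j}\Big\langle \,|Y_n^{i,\star,N}|^{2p-2}Y_n^{i,\star,N}-|Y_n^{j,\star,N}|^{2p-2}Y_n^{j,\star,N},\ f(Y_n^{i,\star,N}-Y_n^{j,\star,N})\Big\rangle ,
\end{align*}
and because the polynomial weights $|Y^{i}|^{2p-2}$ and $|Y^{j}|^{2p-2}$ differ, this is \emph{not} proportional to the pairing $\langle Y^{i}-Y^{j}, f(Y^{i}-Y^{j})\rangle$. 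Splitting off $|Y^{i}|^{2p-2}\langle Y^{i}-Y^{j},f(Y^{i}-Y^{j})\rangle$ (which is fine) leaves a residual $\langle (|Y^{i}|^{2p-2}-|Y^{j}|^{2p-2})Y^{j}, f(Y^{i}-Y^{j})\rangle$ on which only the local-Lipschitz (polynomial growth) bound for $f$ is available; its moment order is $2p+q$, not $2p$, and Proposition \ref{prop:yi-yj leq xi-xj} does not repair this because the weight of order $2p-2$ in $Y$ survives. No application of Young's inequality brings this back to order $2p$, so the recursion does not close; this is precisely the obstruction the authors themselves flag after Theorem \ref{theorem:SSM: strong error 2}, where they state that the $2p$-analogue of \eqref{eq:sum y square leq sum x square} compatible with \emph{carr\'e-du-champs} arguments and the convolution term is open.

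The paper's proof avoids this entirely. It never performs a $2p$-th order test of the implicit step. Instead it dominates $\bE[|\hx_n^{i,N}|^{2p}]$ by the functional $H^{X,p}_n$ built from (i) the $2p$-th moments of pairwise \emph{differences} $|\hx_n^{i,N}-\hx_n^{j,N}|^{2p}$ and (ii) the $p$-th power of the empirical second moment $\big|\frac1N\sum_j(1+|\hx_n^{j,N}|^2)\big|^{p}$, via the identity $\hx_n^{i,N}=\frac1N\sum_j(\hx_n^{i,N}-\hx_n^{j,N})+\frac1N\sum_j\hx_n^{j,N}$. Both ingredients are propagated through the implicit step by raising the purely \emph{pathwise, quadratic-level} inequalities of Propositions \ref{prop:yi-yj leq xi-xj} and \ref{prop:sum y square leq sum x square} to the $p$-th power, so the super-linear growth of $f$ is never confronted at moment order $2p$. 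This change of test functional is the missing idea in your proposal; without it (or an equivalent device), the step you describe as handled by ``careful application of Young's inequality'' fails.
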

The proof is presented in Section \ref{subsection: Moment bound of the SSM} and builds around auxiliary Theorem \ref{theorem: discrete moment bound}. There, we expand  \eqref{eq:mmb:def of HXp} and \eqref{eq:mmb:def of HYp}, and leverage the properties of the SSM scheme stated in Proposition \ref{prop:yi-yj leq xi-xj} and \ref{prop:sum y square leq sum x square} to deal with the difficult convolution terms. 

Next we state the classic mean-square error convergence result.
\begin{theorem}[Classical path-space mean-square convergence]
\label{theorem:SSM: strong error 2}
Let the setting of Theorem \ref{theorem:SSM: strong error 1} hold. 
Assume there exists some $\varepsilon\in (0,1)$ such that $m \ge \max\{4q+4,2+q+q/\varepsilon \} >\max\{2(q+1),4\}$ with $X_0^i\in L^m_0(\bR^d)$ for $i\in\llbracket 1,N\rrbracket$ and $q$ given as in Assumption \ref{Ass:Monotone Assumption}. 
 Then there exists a constant $C>0$ independent of $h,N,M$ (but depending on $T$ and $m$) such that 
\begin{align}
    \label{eq:convergence theroem term 2}
  \sup_{i\in \llbracket 1,N \rrbracket} 
  \bE\big[ \sup_{0\le t \le T} |X_{t}^{i,N}-\hx_{t}^{i,N} |^2 \big]
  &\le Ch^{1-\varepsilon}.
\end{align}
\end{theorem}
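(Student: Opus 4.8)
The plan is to upgrade the pointwise estimate of Theorem \ref{theorem:SSM: strong error 1} to a supremum-inside-the-expectation estimate by the standard route: write the error SDE, apply Itô's formula to $|X^{i,N}_t - \hx^{i,N}_t|^2$, take the supremum over $t$, and then use Burkholder--Davis--Gundy (BDG) on the stochastic integral before applying Gronwall. First I would set $e^i_t := X^{i,N}_t - \hx^{i,N}_t$ and expand $\dd e^i_t$ using \eqref{Eq:MV-SDE Propagation} and the continuous extension \eqref{eq: scheme continous extension in SDE form}; the drift difference splits into a "frozen-time / frozen-argument" part (comparing the coefficients evaluated at $(t, X^{i,N}_t, \mu^{X,N}_t)$ versus $(\kappa(t), Y^{i,\star,N}_{\kappa(t)}, \hm^{Y,N}_{\kappa(t)})$) and the genuine one-sided-Lipschitz part coming from $v$. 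For the $v$-term I would use the one-sided Lipschitz property of the system map $V$ from Remark \ref{remark:OSL for the whole function / system V} together with Proposition \ref{prop:yi-yj leq xi-xj}/\ref{prop:sum y square leq sum x square} (exactly as flagged after Theorem \ref{theorem:SSM: strong error 1}) to control the convolution contribution; for the $b,\sigma$ terms I use the global Lipschitz assumptions $(\mathbf{A}^b)$, $(\mathbf{A}^\sigma)$ plus the $1/2$-Hölder-in-time regularity, which produces error terms of the form $|\kappa(t)-t|^{1/2}$, $|Y^{i,\star,N}_{\kappa(t)} - \hx^{i,N}_t|$ and a Wasserstein term $W^{(2)}(\mu^{X,N}_t, \hm^{Y,N}_{\kappa(t)})^2 \le \frac1N\sum_j |e^j_t|^2 + \frac1N\sum_j|\hx^{j,N}_t - Y^{j,\star,N}_{\kappa(t)}|^2$ handled by symmetry over particles.

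The key auxiliary estimate is a one-step bound $\bE[\sup_{t\in[t_n,t_{n+1}]} |\hx^{i,N}_t - \hx^{i,N}_{t_n}|^{2}]$ and $\bE[|\hx^{i,N}_{t_n} - Y^{i,\star,N}_{t_n}|^{2}]\le Ch^{2}\bE[|v(Y^{i,\star,N}_{t_n},\hm^{Y,N}_{t_n})|^2]$, which I would control via the moment bounds of Theorem \ref{theorem：moment bound for the big theorm time extensions } together with the local-Lipschitz/growth bounds on $v$ (from $(\mathbf{A}^u)$ and $(\mathbf{A}^f)$). Here the growth of $v$ is of order $q+1$, so $|v(Y,\cdot)|^2$ is of order $2q+2$, and closing the estimate requires $2q+2$-th moments of $Y^{\star,N}$ — this is where the moment hypothesis $m\ge 4q+4$ enters (one needs a bit of room beyond $2q+2$ for the Hölder splitting in the $b,\sigma$ terms, giving the $2+q+q/\epsilon$ requirement when one trades a factor $h^{q/(q+\epsilon)}$-type term against $h^{1-\epsilon}$ using Young's inequality). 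After collecting terms, the inequality takes the schematic form
\begin{align*}
\bE\Big[\sup_{s\le t}|e^i_s|^2\Big] \le C\int_0^t \bE\Big[\sup_{r\le s}|e^i_r|^2\Big]\,\dd s + C\!\int_0^t\!\frac1N\sum_{j}\bE\big[|e^j_s|^2\big]\dd s + Ch^{1-\epsilon},
\end{align*}
and taking $\sup_i$ (or averaging over $i$, using exchangeability) makes the middle term absorbable into the first; Gronwall then yields \eqref{eq:convergence theroem term 2}.

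The main obstacle I anticipate is \emph{not} the Gronwall step but the control of the convolution/interaction term inside the BDG-and-sup framework in a way that does not blow up with $N$: one cannot naively bound $|v(X^{i,N}_t,\mu^{X,N}_t) - v(Y^{i,\star,N}_{\kappa(t)},\hm^{Y,N}_{\kappa(t)})|$ by a local Lipschitz constant times $|e^i_t|$ because the local Lipschitz constant itself depends on the (random, superlinear) states, and summing $\frac1N\sum_j$ of such products against high moments must be done carefully so that all constants are $N$-independent. The structural Propositions \ref{prop:yi-yj leq xi-xj} and \ref{prop:sum y square leq sum x square} (comparing $Y^{\star,N}$ increments to $\hx^{N}$ increments) are exactly what make this work, and re-deriving the one-step bounds with those in hand is the technical heart of the argument; everything else is a fairly routine, if lengthy, Itô--BDG--Gronwall computation mirroring the proof of Theorem \ref{theorem:SSM: strong error 1} but keeping the supremum inside.
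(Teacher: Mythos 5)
Your overall skeleton (It\^o on $|\Delta X^i_t|^2$, supremum inside the expectation, BDG on the martingale term, Gr\"onwall) matches the paper's, and you correctly point at the convolution term as the technical heart. But the mechanism you propose for that term --- ``use the one-sided Lipschitz property of the system map $V$ from Remark \ref{remark:OSL for the whole function / system V} together with Propositions \ref{prop:yi-yj leq xi-xj}/\ref{prop:sum y square leq sum x square}'' --- does not work here, and this is a genuine gap. The OSL structure of $f$ only helps when the test vector is $\Delta X^i_s-\Delta X^j_s$, i.e.\ after symmetrising over the particle indices; in the pointwise proof of Theorem \ref{theorem:SSM: strong error 1} this symmetrisation is legitimate because one first takes expectations and invokes exchangeability (see \eqref{eq:se1:y-y v-v}). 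Once $\sup_{t\le T}$ sits inside the expectation, the pathwise supremum no longer commutes with the average over particles, the exchangeability trick is unavailable, and one is forced to bound the interaction term by its absolute value, $\frac{C}{N}\sum_j\big(1+|X^{i,N}_s-X^{j,N}_s|^q+|\hx^{i,N}_s-\hx^{j,N}_s|^q\big)\,|\Delta X^i_s-\Delta X^j_s|\,|\Delta X^i_s|$ --- a product of three unbounded factors that cannot be dominated by $|\Delta X^i_s|^2$.

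The paper's resolution, which is absent from your plan, is a Chebyshev/truncation argument (see \eqref{eq:proof strong errr2:ep 1}--\eqref{eq:proof strong errr2:ep 2}): split on $\{G_7^{i,j,s}<M^{\epsilon}\}$ and its complement. On the good event one pays the factor $M^{\epsilon}=T^\epsilon h^{-\epsilon}$ and crucially invokes the \emph{already proved pointwise} rate $\bE[|\Delta X^i_s|^2]\le Ch$ of Theorem \ref{theorem:SSM: strong error 1}, yielding $Ch^{1-\epsilon}$; on the bad event Chebyshev produces a factor $h$ at the cost of the moment $\bE[|G_7|^{1+1/\epsilon}G_8]$, which is finite exactly when $m\ge 2+q+q/\epsilon$. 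This is simultaneously the source of the $\epsilon$ loss in the rate and of the extra moment hypothesis; your plan instead attributes both to a Young/H\"older trade-off in the $b,\sigma$ time-regularity terms, which is not where they arise (those terms are routine and cost only $Ch$). Without the truncation step --- or some substitute that feeds the pointwise estimate of Theorem \ref{theorem:SSM: strong error 1} back into the path-space argument --- the scheme you describe does not close.
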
  
The proof is presented in Section \ref{subsection:Proof of strong error 2}. 
For this result we need both the $L^p$-moments of the scheme and solution process. 
This in contrast to the proof methodology of Theorem \ref{theorem:SSM: strong error 1} and the reason we introduce Theorem \ref{theorem：moment bound for the big theorm time extensions } as a main result. 
The {nearly optimal} error rate of $(1-\varepsilon)$ is a consequence of the estimation of \eqref{eq:se2:special term} (product of three unbounded random variables). 
The expectation is taken after the supremum and then we use Theorem \ref{theorem:SSM: strong error 1} and \ref{theorem：moment bound for the big theorm time extensions } -- this forces an $\varepsilon$ sacrifice of the rate. 
The {nearly optimal} error rate of $(1-\varepsilon)$ is also the present best one available even for higher-order differences $p>2$ (although we do not present these calculations). 
It is still open how to prove \eqref{eq:SSTM:momentbound for split-step time extension process00} with the $\sup_t$ inside the expectation --- the difficulty to be overcome relates to establishing \eqref{eq:sum y square leq sum x square} of Proposition \ref{prop:sum y square leq sum x square} under higher moments $p>2$ in a way that aligns with \textit{carr\'e-du-champs} type arguments and the convolution term (within the style of proof we provide, otherwise new arguments need be found). 
It remains an open problem to show \eqref{eq:convergence theroem term 2} when $\varepsilon=0$.

\subsubsection*{A particular result for granular media equation type models}

We recast the earlier results to granular media type models where the diffusion coefficient is constant and higher convergence rates can be established.
\begin{assumption}
\label{Ass:GM Assumption}
Consider the following MV-SDE 
\begin{align}
\label{Eq:GM type MVSDE}
\dd X_{t} &=  v(X_{t},\mu_{t}^{X}) \dd t + \sigma \dd W_{t},  \quad X_{0} \in L_{0}^{m}( \bR^{d}),\quad 
v(x,\mu)= \int_{\bR^{d}  } f(x-y) \mu(\dd y).
\end{align}
Let $f:\bR^d \to \bR^d$ be continuously differentiable satisfying  ($\mathbf{A}^f$) of Assumption \ref{Ass:Monotone Assumption}. There exist $ L_{f'},~L_{f''} >0$, $q\in \bN$ and $q>1$, with $q$ the same as in ($\mathbf{A}^f$), such that for all  $ x,~x'\in \bR^d$ 
\begin{align}
\label{eq:ass:gm:f 1}
    | \nabla f (x)| 
& 
\leq L_{f'}(1+ |x|^{q} )  
,\quad
    | \nabla f  (x)  
    -\nabla f  (x') 
    | 
\leq L_{f''}(1+ |x|^{q-1}+|x'|^{q-1} )|x-x'|  .
\end{align}
The function  $\sigma:[0,T] \times \bR^d \times \cP_2(\bR^d) \to \bR^{d\times l}$ is a constant matrix. 
\end{assumption}
In the language of the granular media equation, MV-SDE \eqref{eq:ass:gm:f 1} corresponds to the Fokker-Plank PDE $\partial_t \rho =\nabla \cdot[\nabla \rho+\rho \nabla W * \rho]$ where $\nabla W=f$ and $\rho$ is the probability measure \cite{Malrieu2003}. We have the following results.
\begin{theorem}
\label{theorem：gm strong error}
Let Assumption \ref{Ass:GM Assumption} hold and choose $h$ as in \eqref{eq:h choice}. Let $i\in\llbracket 1,N\rrbracket$, take $X^{i,N}$ to be the solution to \eqref{Eq:MV-SDE Propagation}, let $\hx^{i,N}$ be the continuous-time extension of the SSM given by \eqref{eq: scheme continous extension in SDE form} and $X^i_0\in L^m_0(\bR^d)$. Let $m\ge \max\{8q,~4q+4\}>\max\{2(q+1),4\}$ with $q$ as defined in Assumption \ref{Ass:GM Assumption}. 
Then   there exist a constant $C>0$ independent of $h,N,M$ (but depending on $T$ and $m$) such that 
\begin{align}
\label{eq:convergence theroem term 11 constant diffusion}
     \sup_{i\in \llbracket 1,N \rrbracket} \sup_{0\le t \le T}
  \bE\big[  |X_{t}^{i,N}-\hx_{t}^{i,N} |^2 \big]    &   \le Ch^2.
\end{align}
\end{theorem}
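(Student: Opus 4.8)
The plan is to mimic the proof of Theorem~\ref{theorem:SSM: strong error 1} while exploiting two features special to Assumption~\ref{Ass:GM Assumption}: the diffusion coefficient is constant, so the error process has no martingale part, and $f\in C^1$ with $\nabla f$ of polynomial growth and locally Lipschitz (see \eqref{eq:ass:gm:f 1}), so a first-order Taylor expansion of $v$ is available. First I would set $E_t^{i}:=X_t^{i,N}-\hx_t^{i,N}$. Because $\sigma$ is constant the stochastic integrals in \eqref{Eq:MV-SDE Propagation} and \eqref{eq: scheme continous extension in SDE form} cancel, so $t\mapsto E_t^i$ is absolutely continuous and It\^o's formula has no second-order term:
\begin{align*}
\frac{\dd}{\dd t}\, |E_t^i|^2 = 2\big\langle E_t^i,\ v(X_t^{i,N},\mu_t^{X,N}) - v(Y_{\kt}^{i,\star,N},\hm^{Y,N}_{\kt})\big\rangle .
\end{align*}
This structural simplification is what opens the door to the rate~$h^2$.

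Next I would decompose the bracketed drift difference into $\big(v(X_t^{i,N},\mu_t^{X,N}) - v(\hx_t^{i,N},\hm^{\hx,N}_t)\big) + \big(v(\hx_t^{i,N},\hm^{\hx,N}_t) - v(Y_{\kt}^{i,\star,N},\hm^{Y,N}_{\kt})\big)$, where $\hm^{\hx,N}_t:=\tfrac1N\sum_j\delta_{\hx_t^{j,N}}$. For the first (``one-sided Lipschitz'') difference I would pass to $g(t):=\tfrac1N\sum_{i}\bE[|E_t^i|^2]$, which by exchangeability of the particles equals $\sup_i\bE[|E_t^i|^2]$; writing the convolution parts as $\tfrac1{2N^2}\sum_{i,j}$ after the odd-function symmetrisation of Remark~\ref{remark:OSL for the whole function / system V} and using $(\mathbf A^f)$ on the increments $(X_t^{i,N}-X_t^{j,N})-(\hx_t^{i,N}-\hx_t^{j,N})=E_t^i-E_t^j$ bounds this term by $C g(t)$, with no superlinear blow-up since $f$ is never differentiated here. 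The second difference is the genuine time-discretisation error. By \eqref{eq:SSTM:scheme 0}--\eqref{eq:SSTM:scheme 2} and \eqref{eq: scheme continous extension in SDE form} one has $R_t^i:=\hx_t^{i,N}-Y_{\kt}^{i,\star,N} = -\big(h-(t-\kt)\big)\,v(Y_{\kt}^{i,\star,N},\hm^{Y,N}_{\kt}) + \sigma\big(W_t^i-W_{\kt}^i\big)$, and a first-order Taylor expansion of $v$ (legitimate since $f\in C^1$) writes the second difference as $\tfrac1N\sum_j G^{ij}(R_t^i-R_t^j)$ with $G^{ij}=\int_0^1\nabla f\big((Y_{\kt}^{i,\star,N}-Y_{\kt}^{j,\star,N})+\lambda(R_t^i-R_t^j)\big)\dd\lambda$.

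It then remains to show that, after pairing $\tfrac1N\sum_j G^{ij}(R_t^i-R_t^j)$ with $E_t^i$, averaging over $i$, taking expectations and integrating in time, the contribution is $O(h^2)$ up to a $C\int_0^t g(s)\,\dd s$ Gr\"onwall term. I would split $R_t^i-R_t^j$ into its predictable $O(h)$ piece and its martingale piece $\sigma\big(W_t^i-W_{\kt}^i-(W_t^j-W_{\kt}^j)\big)$. The predictable piece carries an explicit factor $h$, so after Cauchy--Schwarz and Young it contributes $Ch^2+\tfrac12 C\int_0^t g(s)\,\dd s$, the polynomial factors from $G^{ij}$ and from $v(Y_{\kt}^{\star,N},\cdot)$ being controlled in $L^p$ by the moment bounds of Theorems~\ref{Thm:MV Monotone Existence} and \ref{theorem：moment bound for the big theorm time extensions } together with Propositions~\ref{prop:yi-yj leq xi-xj} and \ref{prop:sum y square leq sum x square} (bounding moments of $Y^{\star,N}$ by those of $\hx^{i,N}$). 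The martingale piece is the delicate one — a brute-force bound gives only $O(h^{3/2})$. To recover $O(h^2)$ I would freeze the slowly varying factors at the grid time $\kt$: replace $G^{ij}$ by the $\cF_{\kt}$-measurable $\nabla f(Y_{\kt}^{i,\star,N}-Y_{\kt}^{j,\star,N})$, the correction controlled by the local Lipschitz continuity of $\nabla f$ in \eqref{eq:ass:gm:f 1}, and replace $E_t^i$ by $E_{\kt}^i$, the correction being $\int_{\kt}^t[v(X_s^{i,N},\mu_s^{X,N})-v(Y_{\kt}^{i,\star,N},\hm^{Y,N}_{\kt})]\,\dd s$. The frozen leading term has zero $\cF_{\kt}$-conditional mean because $W_t^i-W_{\kt}^i$ is a centred increment independent of $\cF_{\kt}$; the residual correlations — between the smoothed Brownian increment hidden inside $E_t^i-E_{\kt}^i$ and inside the freezing correction, and $W_t^i-W_{\kt}^i$ — are computed explicitly and are $O(h^2)$ after integration, using constancy of $\sigma$ and feeding in the $O(h)$ a priori rate of Theorem~\ref{theorem:SSM: strong error 1}. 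Collecting everything yields $g(t)\le Ch^2 + C\int_0^t g(s)\,\dd s$; Gr\"onwall's lemma and then taking $\sup_{t\in[0,T]}$ give \eqref{eq:convergence theroem term 11 constant diffusion}. The moment budget $m\ge\max\{8q,4q+4\}$ is precisely what these H\"older estimates consume: the $4q+4$ from terms like $\sup_s|X_s^{i,N}|^{4(q+1)}$ arising in $v$, the $8q$ from squaring products of the $\nabla f$-type factors $\sim|X|^{4q}$.

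The main obstacle is the last step: the Brownian-increment cross-terms in the local error admit only an $O(h^{3/2})$ estimate by direct bounds, and extracting the extra half power of $h$ is what forces one to linearise $v$ (hence the $C^1$ hypothesis on $f$), to use constancy of $\sigma$ so the error process has no martingale part and the surviving correlations are purely second order in the Brownian increments, and to bootstrap off the rate already established in Theorem~\ref{theorem:SSM: strong error 1}.
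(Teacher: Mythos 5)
Your proposal is correct and follows essentially the same route as the paper's proof: absence of a martingale part in the error because $\sigma$ is constant, splitting the drift difference into a one-sided-Lipschitz part plus a discretisation part, Taylor-expanding $f$ around an $\cF_{\kappa(t)}$-measurable point so that the leading Brownian-increment term vanishes by conditional centering, controlling the freezing corrections via the local Lipschitz property of $\nabla f$ together with the $O(h^{3})$ bound on $\bE[|\Delta X^i_t-\Delta X^i_{\kappa(t)}|^2]$ (bootstrapped from Theorem \ref{theorem:SSM: strong error 1} and the moment bounds), and closing with Gr\"onwall. The only differences are cosmetic (integral-form Taylor remainder instead of the mean-value form, and freezing $\nabla f$ at $Y^{\star}_{\kappa(t)}$-differences rather than $\hx_{\kappa(t)}$-differences).
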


This result is proved in Section \ref{subsection:Proof of strong error 3 GM}. Supporting simulation results are presented in Section \ref{exam:StochGinzburg} and confirm the strong root mean square error rate of $1.0$. 

We note that one can use a proof methodology similar to that used for Theorem \ref{theorem:SSM: strong error 2} to obtain \eqref{eq:convergence theroem term 11 constant diffusion} with the $\sup_t$ inside the expectation. This would deliver a rate of $h^{2-\varepsilon}$, the key steps are similar to \eqref{eq:proof strong errr2:ep 1}-\eqref{eq:proof strong errr2:ep 2}. 
\color{black}

\section{Examples of interest}
\label{sec:examples}
 We illustrate the SSM on three numerical examples.\footnote{Implementation code in Python is available in 
 \href{url}{https://github.com/AnandaChen/Simulation-of-super-measure}
 } 
 The ``true'' solution in each case is unknown and the convergence rates for these examples are calculated in reference to a proxy solution given by the approximating scheme at a smaller timestep $h$ and higher number of particles $N$ (particular details are given below). The strong error between the proxy-true solution $X_T$ and approximation $\hat X_T$ is as follows
\begin{align*}
\textrm{root Mean-square error (Strong error)} %\varepsilon_{strong}
= \Big( \bE\big[\, |X_T-\hat{X}_T|^2\big]\Big)^{\frac12}
\approx \Big(\frac1N \sum_{j=1}^N |X_T^j - \hat{X}_T^j|^2\Big)^\frac12.
\end{align*}

We also consider the path strong error define as follows, for $Mh=T~,t_n=nh$,
\begin{align*}
\textrm{  Strong error (Path) } %\varepsilon_{strong}
= \Big( \bE\big[\,\sup_{ 0\leq t \leq T } |X_t-\hat{X}_t|^2\big]\Big)^{\frac12}
\approx \Big(\frac1N \sum_{j=1}^N \sup_{n\in \llbracket 0,M \rrbracket }|X_{t_n}^j - \hat{X}_{t_n}^j|^2\Big)^\frac12.
\end{align*}
\color{black}
The propagation of chaos (PoC) rate between different particle systems $\{\hx_T^{i,N_l}\}_{i,l}$ where $i$ denotes the $i$-th particle and $N_l$ denotes the size of the system, 
\begin{align}
% \label{aux_PoC_rate_Estimator}
\nonumber
\textrm{Propagation of chaos error (PoC error)} %\varepsilon_{strong}
\approx \Big(\frac{1}{N_l} \sum_{j=1}^{N_l} |\hx_T^{j,N_l} - X_T^{j} |^2\Big)^\frac12.
\end{align}
\begin{remark}[`Taming' algorithm]
For comparative purposes we implement the `\textit{Taming}' algorithm \cite{2021SSM,reis2018simulation} -- any convergence analysis of the taming algorithm to the framework of this manuscript is an open question. 
Of the many variants of Taming possible, set the  terminal time $T$ with $Mh=T$, we implement as follows: $\int_{\bR^{d}  } f(\cdot-y) \mu(\dd y)$ is replaced by 
        $\int_{\bR^{d}  } f(\cdot-y) \mu(\dd y)/(1+M^{-\alpha}|\int_{\bR^{d}  } f(\cdot-y) \mu(\dd y)|)$, and $u$ is replaced by $u/(1+M^{-\alpha}|u|)$ with the { choice of $\alpha=1/2$ for non-constant diffusion and $\alpha=1$ for constant diffusion. }
\end{remark}
Within each example, the error rates of Taming and SSM are computed using the same Brownian motion paths.

Moreover, for the simulation study below, we fix the algorithmic parameters as follows:
\begin{enumerate}
    \item For the strong error, the proxy-true solution is calculated with $h=10^{-4}$ and the approximations are calculated with $h\in\{10^{-3},2\times10^{-3},\dots,10^{-1} \}$ with $N=1000$ at $T=1$ and using the same Brownian motion paths. We compare SSM and Taming with the proxy-true solutions provided by the same algorithm (SSM and Taming) respectively. 
    
     \item For the PoC error, the proxy-true solution is calculated with $N=2560$ and the approximations are calculated with $N\in\{40,80,\dots,1280\}$, with $h=0.001$ at $T=1$ and using the same Brownian motion paths. 
     
     \item The implicit step \eqref{eq:SSTM:scheme 0} of the SSM algorithm is solved, in our examples, via a Newton method iteration. We point the reader to Appendix \ref{appendix: discussion on Newton's method} for a full discussion. In practice, $2$ to $4$ Newton iterations are sufficient to ensure that the difference between two consecutive Newton iterates are not larger than $\sqrt{h}$ in $\|\cdot\|_\infty$-norm (in $\bR^{Nd}$). 
\end{enumerate}
Lastly, the symbols $\cN(\alpha,\beta)$ denote the normal distribution with mean $\alpha\in \bR$ and variance $\beta\in (0,\infty)$.
\color{black}

\subsection{Example: the granular media equation}
\label{exam:StochGinzburg}
% \xbox{Cite \cite{cattiaux2008probabilistic}, there some results there.}
The first example  is the granular media Fokker-Plank equation taking the form $\partial_t \rho =\nabla \cdot[\nabla \rho+\rho \nabla W * \rho]$ with $W(x)=\frac13|x|^3$  and $\rho$ is the correspondent probability density \cite{Malrieu2003,cattiaux2008probabilistic}. In MV-SDE form we have
\begin{align}
\label{eq:example:granular}
\dd X_{t} &= v(X_{t},\mu_{t}^{X}) \dd t + \sqrt{2}~\dd W_{t},  \quad X_{0} \in L_{0}^{m}( \bR^{d}), \quad 
v(x,\mu)= \int_{\bR^{d}  } \Big(-\sign(x-y)|x-y|^2  \Big)  \mu(\dd y),
% \nonumber
\end{align}
where $\sign(\cdot)$ is the standard sign function, $\mu_t^X$ is the law of the solution process $X$ at time $t$.
\begin{figure}[h!bt]
    \centering
    \begin{subfigure}{.48\textwidth}
    \setlength{\abovecaptionskip}{-0.02cm}
     \setlength{\belowcaptionskip}{-0.05 cm}  
			\centering
 			\includegraphics[scale=0.25]{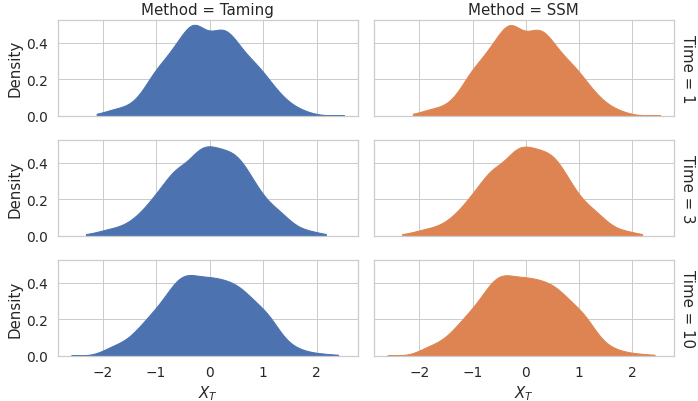}
			\caption{Density with $X_0\sim \cN(0,1)$}
			
		\end{subfigure}%
        \begin{subfigure}{.48\textwidth}
           \setlength{\abovecaptionskip}{-0.02cm}
           \setlength{\belowcaptionskip}{-0.05 cm} 
			\centering
 			\includegraphics[scale=0.25]{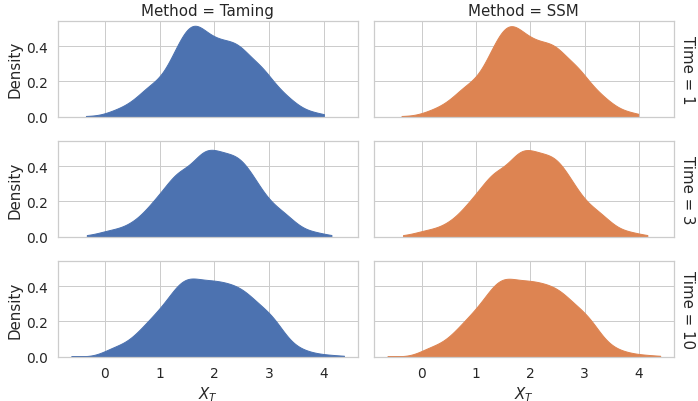}
			\caption{Density with $X_0\sim \cN(2,16)$}
		\end{subfigure}%
						\\
		\begin{subfigure}{.24\textwidth}
		\setlength{\abovecaptionskip}{-0.02cm}
          \setlength{\belowcaptionskip}{-0.2 cm} 
			\centering
 			\includegraphics[scale=0.23]{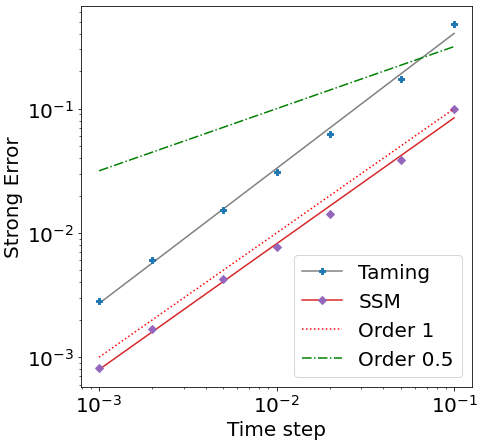}
			\caption{Strong Error}
		\end{subfigure}
		\begin{subfigure}{.24\textwidth}
		\setlength{\abovecaptionskip}{-0.02cm}
		\setlength{\belowcaptionskip}{-0.2 cm}
			\centering
 			\includegraphics[scale=0.23]{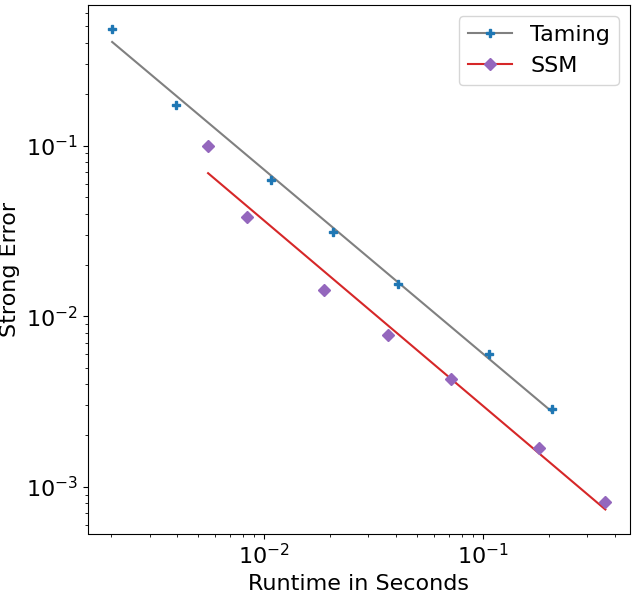}
			\caption{Strong error v.s Runtime }
		\end{subfigure}
		\begin{subfigure}{.24\textwidth}
		\setlength{\abovecaptionskip}{-0.02cm}
		\setlength{\belowcaptionskip}{-0.2 cm}
			\centering
 			\includegraphics[scale=0.23]{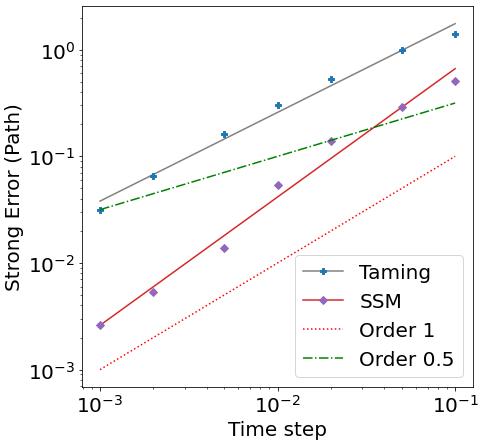}
			\caption{Strong error (Path) }
		\end{subfigure}
		\begin{subfigure}{.24\textwidth}
		\setlength{\abovecaptionskip}{-0.02cm}
		\setlength{\belowcaptionskip}{-0.2 cm}
			\centering
 			\includegraphics[scale=0.23]{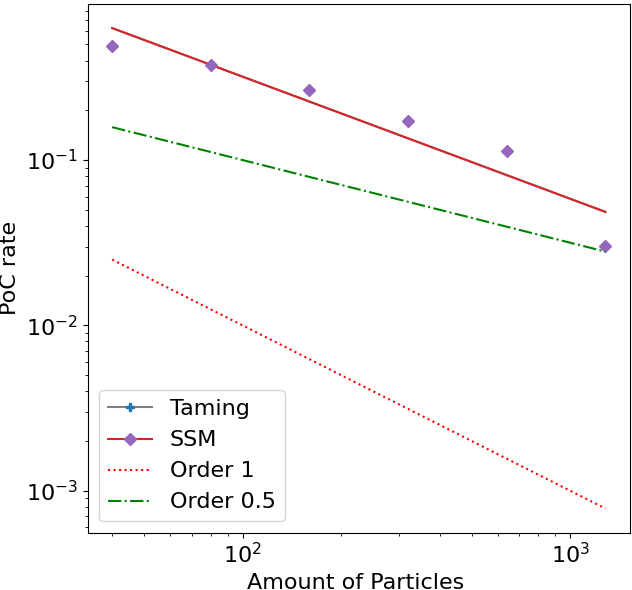}
			\caption{PoC Error }
		\end{subfigure}
	\setlength{\belowcaptionskip}{-0.2cm} 
    \caption{Simulation of the granular media equation \eqref{eq:example:granular} with $N=1000$ particles. (a) and (b) show the density map for Taming (blue, left) and SSM (orange, right) with $h=0.01$ at times $T=1,3,10$ seen top-to-bottom and with different initial distribution. (c) Strong error (rMSE) of SSM and Taming with $X_0 \sim \cN(2,16)$. (d)  Strong error (rMSE) of SSM and Taming w.r.t algorithm with $X_0 \sim \cN(2,16)$.(e) Strong error (Path) of SSM and Taming with $X_0 \sim \cN(2,16)$. (f) PoC error rate in $N$ of SSM and Taming with $X_0 \sim \cN(2,9)$ with perfect overlap of errors. }
    \label{fig:1-2:granular example}
\end{figure}
% \xbox{State that for this specific model with constant diffusion, both methods have similar results.}\\
% \xbox{But emphasis in the later example, only the SSM guarantee to work.}\\
This granular media model has been well studied in \cite{Malrieu2003,cattiaux2008probabilistic} and is a reference model to showcase the numerical approximation. For this specific case, starting from a normal distribution, the particles concentrate and move around its initial mean value (also its steady state). 
In Figure \ref{fig:1-2:granular example} (a) and (b) one sees the evolution of the density map across time $T\in\{1,3,10\}$ for two initial initial distributions  $\mathcal{N}(0,1)$ and $\mathcal{N}(2,4)$ respectively, and $h=0.01$. For this case, both methods approximate well the solution without any apparent leading difference between Taming and SSM.  

Figure \ref{fig:1-2:granular example} (c)  shows  strong error of both methods, computed at $T=1$ across $h\in  \{ 10^{-3},2\times10^{-3},\dots, 10^{-1} \}$. The proxy-true solution for each method is taken at $h=10^{-4}$ and the baseline slopes for the ``order 1'' and ``order 0.5'' convergence rate are provided for comparison. 
\color{black}
The estimated rate of both method is  $1.0$ in accordance to Theorem \ref{theorem：gm strong error} (under constant diffusion coefficient).    
Figure \ref{fig:1-2:granular example} (d)  shows  strong error v.s algorithm runtime of both methods  under the same set up as in (c). The SSM perform slightly better than the Taming method. 

Figure \ref{fig:1-2:granular example} (e) shows the path type strong error of both method, compare to the results in (c), the SSM preserve the error rate of near $1.0$ and perform better than the Taming method.
\color{black}
Figure \ref{fig:1-2:granular example} (f) shows the PoC error of both methods. The two results coincide since the differences between two methods are within $0.001$. The PoC rates are near $0.5$ which is better than the theoretical result of $1/4$ after we take square root in Proposition \ref{Prop:Propagation of Chaos}. This result is similar to  \cite[Example 4.1]{reisinger2020adaptive}, and is explained theoretically by \cite[Lemma 5.1]{delarue2018masterCLT} but under stronger assumptions than ours.

\subsection{Example: Double-well model}
\label{section:exam:toy-toy2}
We consider a limit model of particles under a symmetric double-well confinement. We test a variant of the model studied in \cite{2013doublewell} but change its diffusion coefficient to a non-constant one (in opposition to the previous example). Concretely, we study the following McKean-Vlasov equation  
\begin{align}
\label{eq:example:toy2}
\dd X_{t} &= \big(  v(X_{t},\mu_{t}^{X})+X_{t} \big) \dd t + X_t \dd W_{t}, \quad 
% \\
v(x,\mu)= -\frac14 x^3+\int_{\bR^{d}  } -\big(x-y  \big)^3  \mu(\dd y).
\end{align}
The corresponding Fokker-Plank equation is 
    $ \partial_t \rho=\nabla \cdot [~\nabla ( \frac{\rho|x|^2}{2})+\rho \nabla V+\rho \nabla W * \rho ]$ 
with $W=\frac14|x|^4$, $V=\frac{1}{16}|x|^4-\frac{1}{2}|x|^2$,  $\rho$ is the corresponding density map. There are three stable states $\{-2,0,2\}$ for this model \cite{2013doublewell}.

\begin{figure}[h!bt]
    \centering
    \begin{subfigure}{.48\textwidth}
    \setlength{\abovecaptionskip}{-0.02cm}
     \setlength{\belowcaptionskip}{-0.05 cm}  
			\centering
 			\includegraphics[scale=0.25]{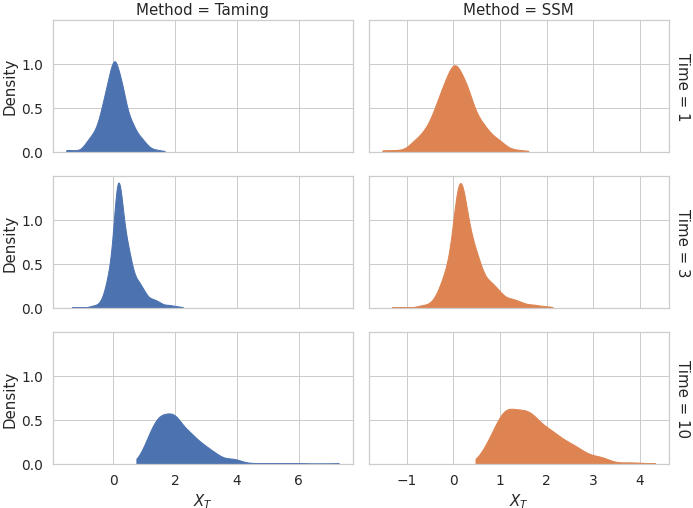}
			\caption{Density with $X_0\sim \cN(0,1)$}
			
		\end{subfigure}%
        \begin{subfigure}{.48\textwidth}
           \setlength{\abovecaptionskip}{-0.02cm}
           \setlength{\belowcaptionskip}{-0.05 cm} 
			\centering
 			\includegraphics[scale=0.25]{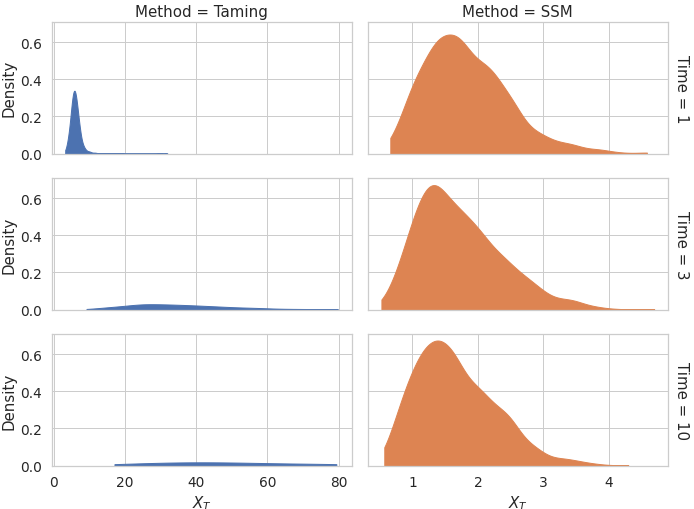}
			\caption{Density with $X_0\sim \cN(3,9)$}
			
		\end{subfigure}%
						\\
		\begin{subfigure}{.45\textwidth}
		\setlength{\abovecaptionskip}{-0.02cm}
          \setlength{\belowcaptionskip}{-0.03 cm} 
			\centering
 			\includegraphics[scale=0.4]{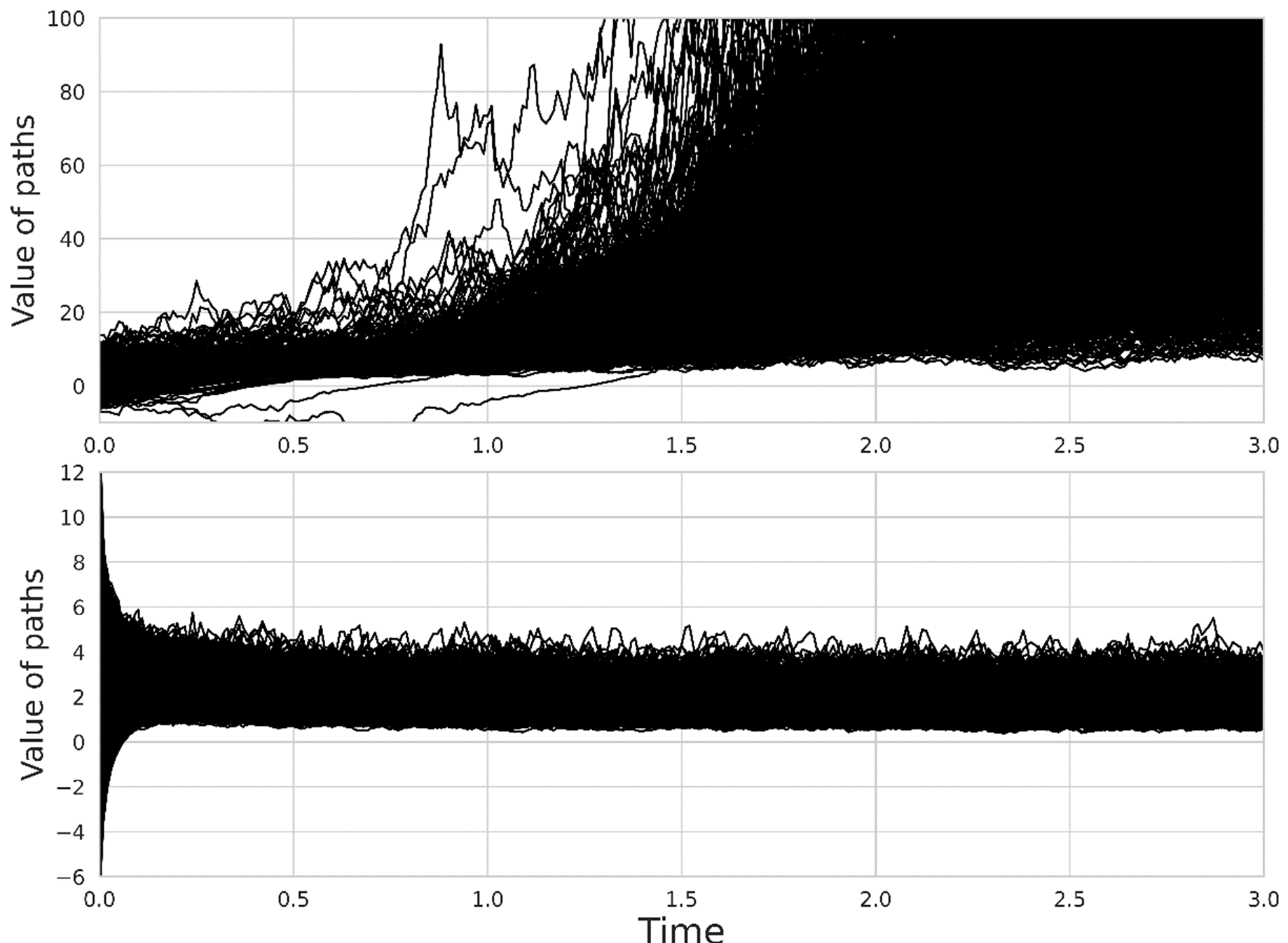}
			\caption{Simulated paths of Taming (top) and SSM (bottom)}
		\end{subfigure}
		\begin{subfigure}{.26\textwidth}
		\setlength{\abovecaptionskip}{-0.02cm}
		\setlength{\belowcaptionskip}{-0.2 cm}
			\centering
 			\includegraphics[scale=0.25]{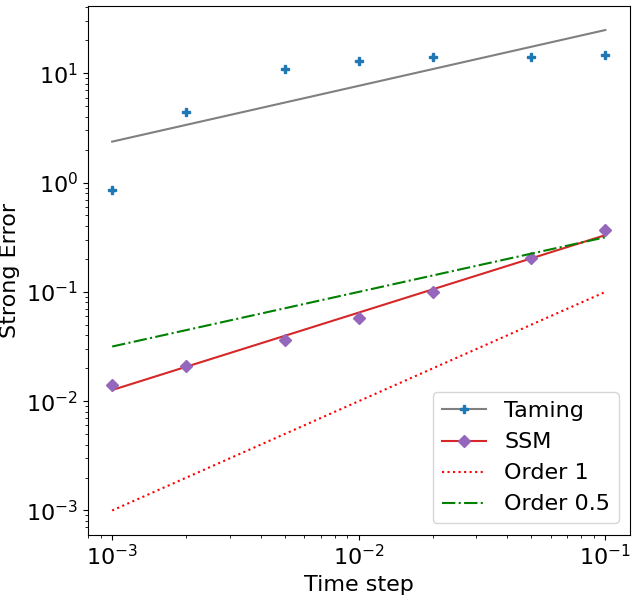}
			\caption{Strong error }
		\end{subfigure}
		\begin{subfigure}{.26\textwidth}
		\setlength{\abovecaptionskip}{-0.02cm}
		\setlength{\belowcaptionskip}{-0.2 cm}
			\centering
 			\includegraphics[scale=0.25]{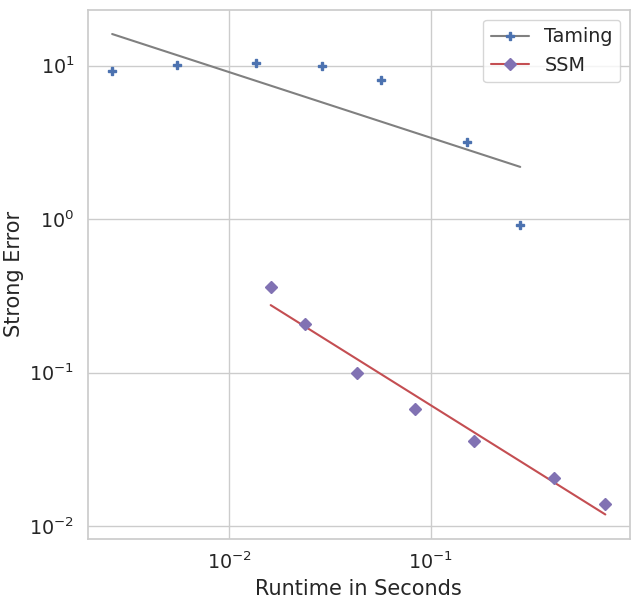}
			\caption{Strong error v.s Runtime }
		\end{subfigure}
	\setlength{\belowcaptionskip}{-0.2cm} 
    \caption{Simulation of the Double-Well model  \eqref{eq:example:toy2} with $N=1000$ particles. 
     (a) and (b) show the density map for Taming (blue, left) and SSM (orange, right) with $h=0.01$  at times $T=1,3,10$ seen top-to-bottom and with different initial distribution. 
     (c)  simulated paths by Taming (top) and SSM (bottom) with $h=0.01$ over  $t\in[0,3]$ and with $X_0\sim \cN(3,9)$.
     (d)  Strong error (rMSE) of SSM and Taming with $X_0 \sim \cN(2,4)$. 
     (e)  Strong error (rMSE) of SSM and Taming w.r.t algorithm Runtime with $X_0 \sim \cN(2,4)$.
            }
    \label{fig:3:the toy example3}
\end{figure}

The example of Section \ref{exam:StochGinzburg} was a relatively mild with additive noise and where both methods performed well. For this double-well model of \eqref{eq:example:toy2}, the drift includes super-linear growth components in both space and measure and a non-constant unbounded diffusion coefficient.  

In Figure \ref{fig:3:the toy example3} (a) and (b), Taming (blue, left) fails to produce acceptable results of any type -- Figure \ref{fig:3:the toy example3} (c) shows the simulated paths of both methods where it is noteworthy to see that Taming become unstable while the SSM paths remain stable. In respect to Figure \ref{fig:3:the toy example3} (a) and (b), the SSM (orange, right) depicts the distribution's evolution to one of the expected stable states ($x=2$) as time evolves. It is interesting to find out that for the SSM in (a), where $X_0\sim \mathcal{N}(0,1)$, the particles shift from the zero (unstable) steady state to the positive stable steady state $x=2$. However, in (b) with $X_0 \sim \mathcal{N}(3,9)$, we find that the particles remain within the basin of attraction of the stable state $x=2$.   
Figure \ref{fig:3:the toy example3} (d) displays under the same parameter choice for $h,~T$ as for the granular media example of Section \ref{exam:StochGinzburg} with $X_0\sim \cN(2,4)$ the estimated rate of convergence for the schemes. It  shows the taming method fails to converge (but does not explode). The strong error rate of the SSM is the expected $1/2$ in-line with Theorem \ref{theorem:SSM: strong error 1} (and Theorem \ref{theorem:SSM: strong error 2}).  

The ``order 1'' and ``order 0.5'' lines are baselines corresponding to the slope of $1$ and $0.5$ rate of convergence.

Figure \ref{fig:3:the toy example3} (e) shows that, to reach the same strong error level Taming shall takes far more (over 100 times) runtime than the SSM.

\subsection{Example: 2d  Van der Pol (VdP) oscillator}
\label{section:example:vdp}

We consider the  Van der Pol (VdP) model described in 
 \cite[Section 4.2 and 4.3]{HutzenthalerJentzen2015AMSbook}, with added super-linearity in measure and non-constant unbounded diffusion.  
 We study the following MV-SDE dynamics:  set $x=(x_1,x_2)\in\bR^2$, for \eqref{Eq:General MVSDE} define the functions $f,u,b,\sigma$ as 
\begin{align}
\label{eq:example:vdp}
f(x)=-x|x|^2
,
\quad
u(x)=
\left[\begin{array}{c}
-\frac43 x_1^3 \\
0
\end{array}\right],
\quad
 b(x)=
\left[\begin{array}{c}
4(x_1-x_2) \\
\frac{1}{4}x_1
\end{array}\right],
\quad
\sigma (x)=
\left[\begin{array}{ccc}
x_1 & 0 \\
0 &  x_2
\end{array}\right],
\end{align}
which satisfy the assumptions of this work.

Figure \ref{fig:1-3:vdp example}  (a)  shows  the strong error of both methods, the ``order 1'' and ``order 0.5'' lines are baselines with the slope of $1$ and $0.5$ for comparison. The estimated rate of the SSM is near $0.5$ while Taming failed to converge. 
Figure \ref{fig:1-3:vdp example} (b) shows the PoC error of both methods, Taming failed to converge while the estimated rate of the SSM is near $0.5$ (see discussion of previous Section \ref{exam:StochGinzburg}). 

Figure \ref{fig:1-3:vdp example} (c)  shows the system's phase-space portraits (i.e., the parametric plot of $t\mapsto (X_{1,t},X_{2,t})$ and $t\mapsto (\bE[X_{1,t}],\bE[X_{2,t}])$ over $t\in [0,20]$) of the SSM with respect to different choices of $N\in\{30,100,500,1000\}$. The impact of $N$ on the quality of simulation is apparent as is the ability of the SSM to capture the periodic behaviour of the true dynamics. 
Figure \ref{fig:1-3:vdp example}  (d)-(e)-(f)-(g) shows the expectation's fluctuation (of Figure \ref{fig:1-3:vdp example} (c)) and the system's phase-space path portraits of the SSM for different choices of $N$. The trajectory becomes smoother as $N$ becomes larger and the  paths are similar for  $N\ge 500$.

\begin{figure}[h!bt]
    \centering
		\begin{subfigure}{.26\textwidth}
		    \setlength{\abovecaptionskip}{-0.04cm}
     \setlength{\belowcaptionskip}{-0.1 cm}  
			\centering
 			\includegraphics[scale=0.26]{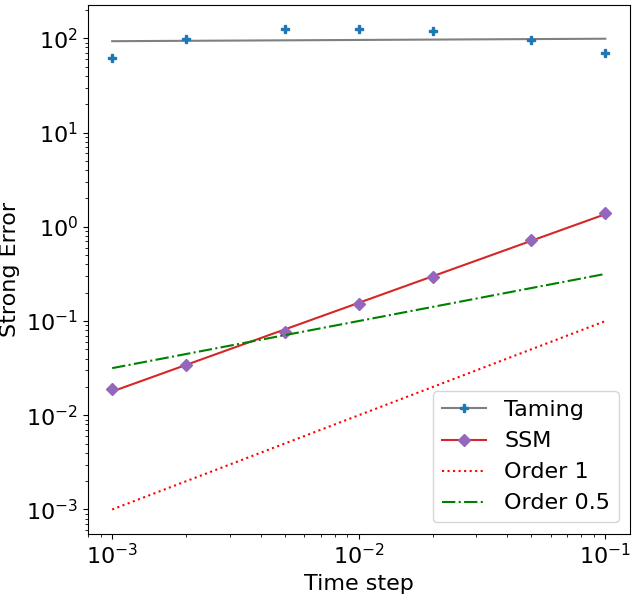}
			\caption{Strong error}
			\label{fig:13-2}
		\end{subfigure}
		\begin{subfigure}{.26\textwidth}
		    \setlength{\abovecaptionskip}{0.05cm}
            \setlength{\belowcaptionskip}{-0.05 cm}  
			\centering
			\includegraphics[scale=0.25]{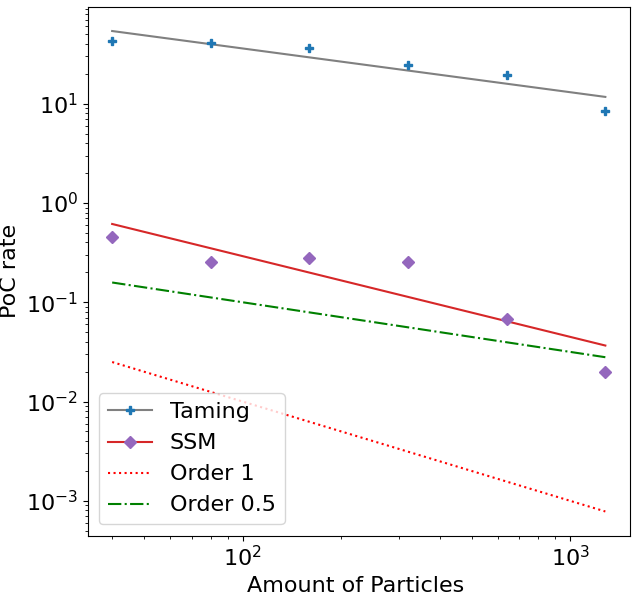}
			\caption{PoC error}
 			\label{fig:13-3}
		\end{subfigure}
		 \begin{subfigure}{.46\textwidth}
            \setlength{\abovecaptionskip}{-0.04cm}
     \setlength{\belowcaptionskip}{-0.1 cm}  
			\centering
 			\includegraphics[scale=0.46]{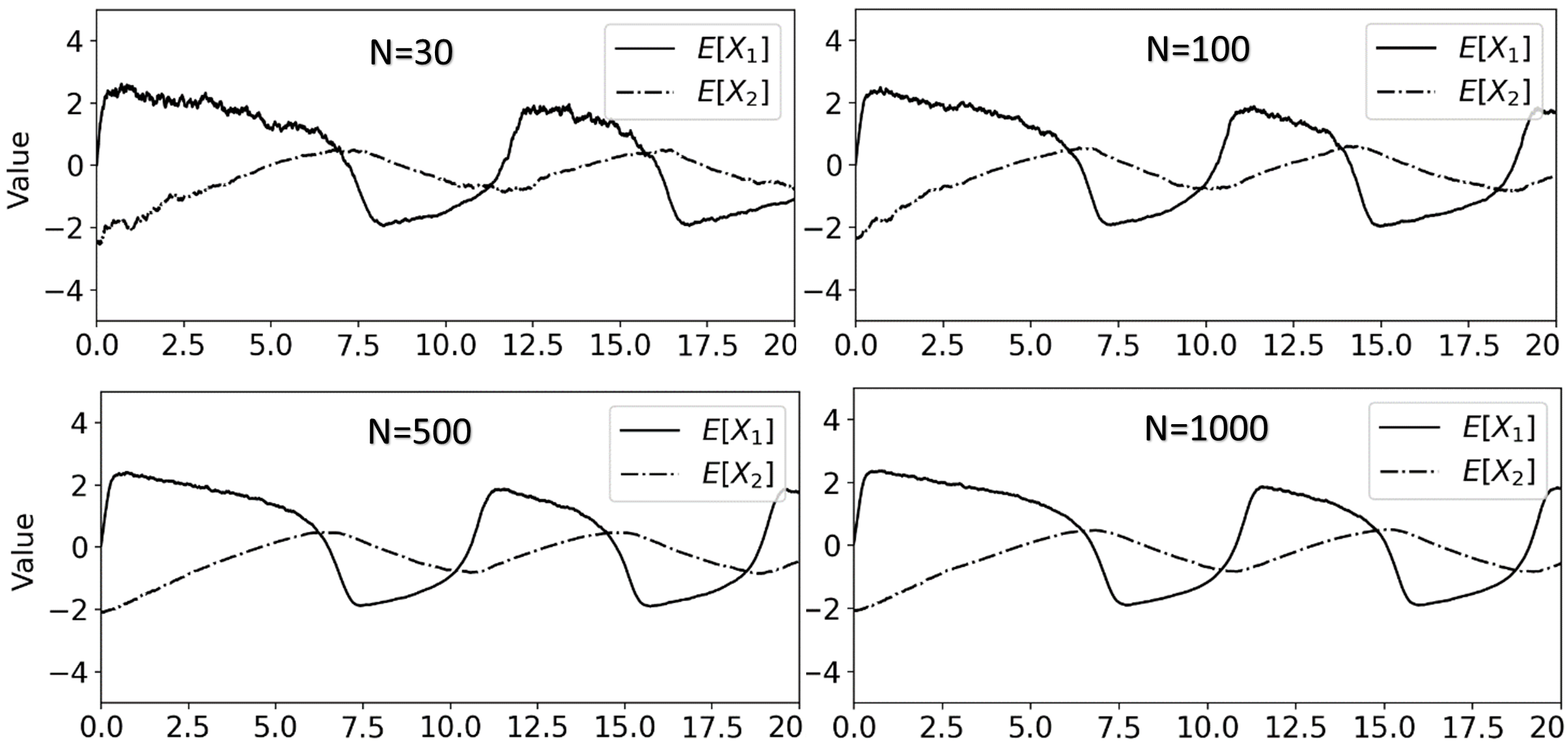}
			\caption{Expectation paths of SSM w.r.t $N$}
			\label{fig:13-1}
		\end{subfigure}%
		\\
		 \centering

		\begin{subfigure}{.24\textwidth}
		    \setlength{\abovecaptionskip}{-0.04cm}
     \setlength{\belowcaptionskip}{-0.1 cm}  
			\centering
 			\includegraphics[scale=0.58]{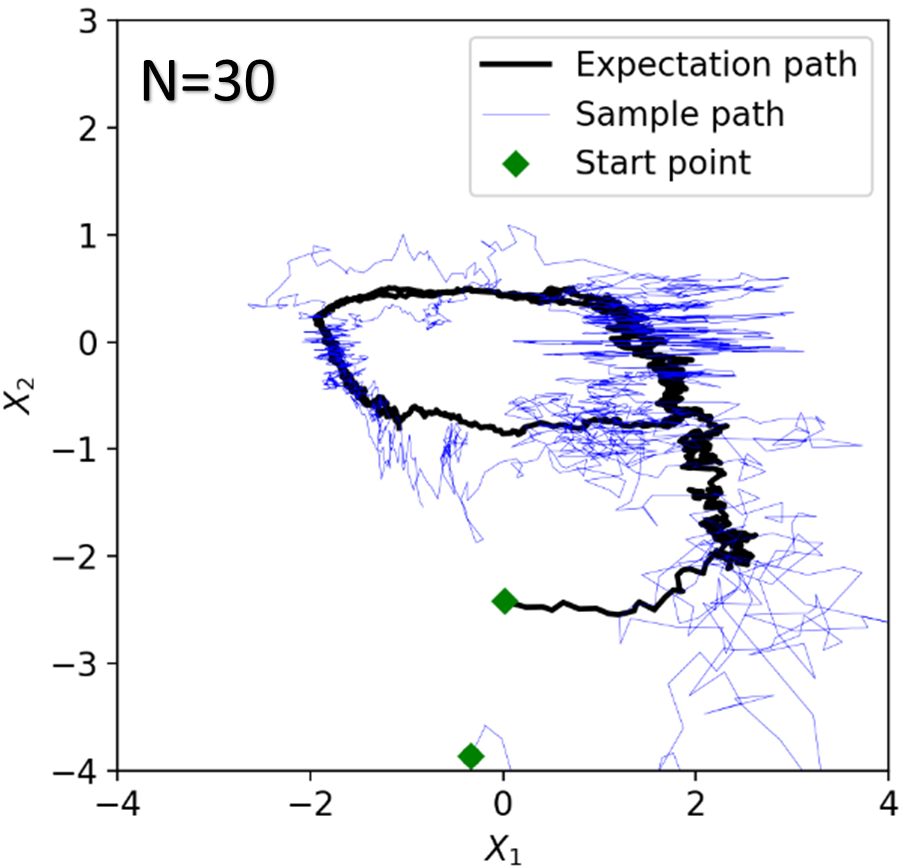}
			\caption{ $N=30$}
		\end{subfigure}
        		\begin{subfigure}{.24\textwidth}
		    \setlength{\abovecaptionskip}{-0.04cm}
     \setlength{\belowcaptionskip}{-0.1 cm}  
			\centering
 			\includegraphics[scale=0.58]{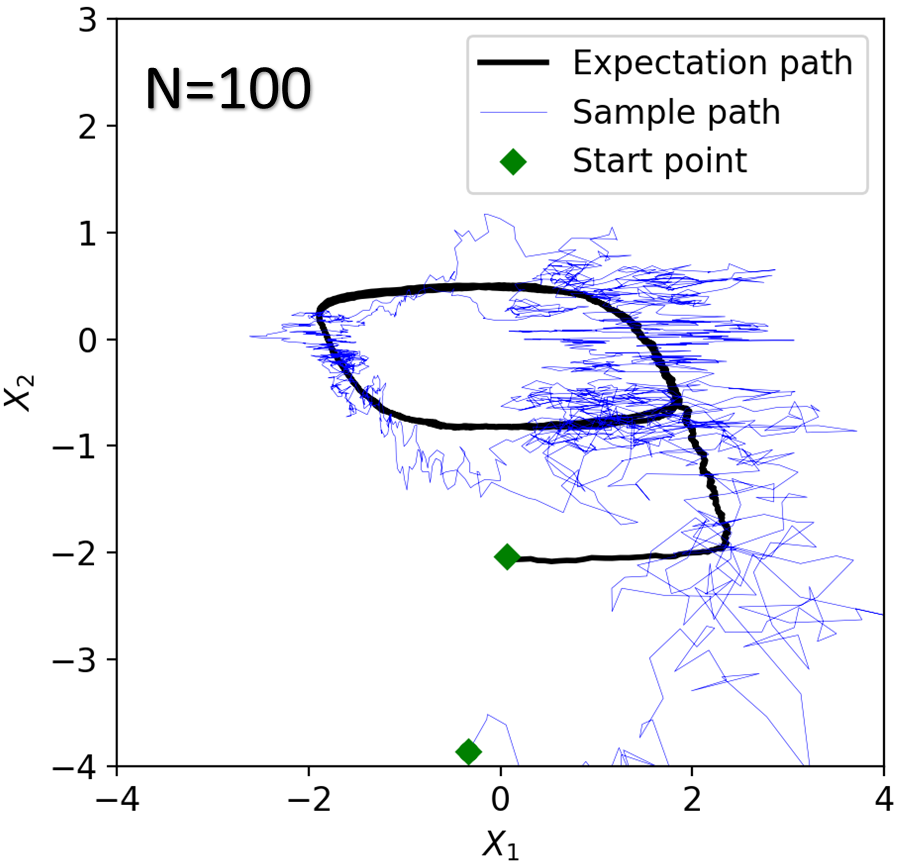}
			\caption{  $N=100$}
		\end{subfigure}		\begin{subfigure}{.24\textwidth}
		    \setlength{\abovecaptionskip}{-0.04cm}
     \setlength{\belowcaptionskip}{-0.1 cm}  
			\centering
 			\includegraphics[scale=0.58]{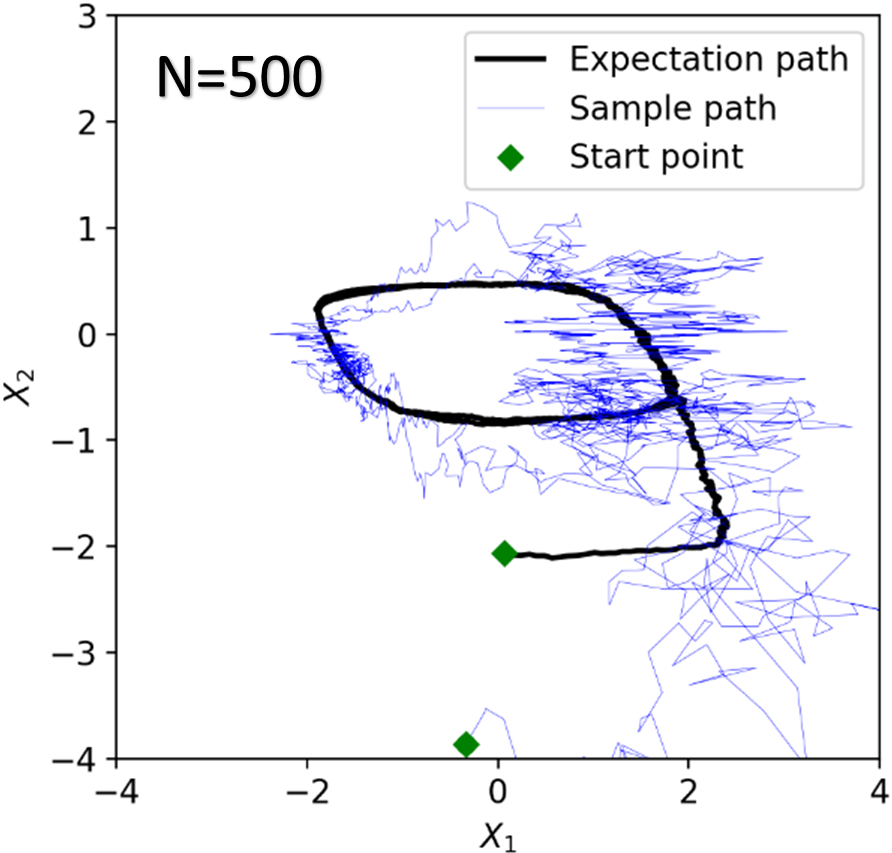}
			\caption{  $N=500$}
		\end{subfigure}		\begin{subfigure}{.24\textwidth}
		    \setlength{\abovecaptionskip}{-0.04cm}
     \setlength{\belowcaptionskip}{-0.1 cm}  
			\centering
 			\includegraphics[scale=0.58]{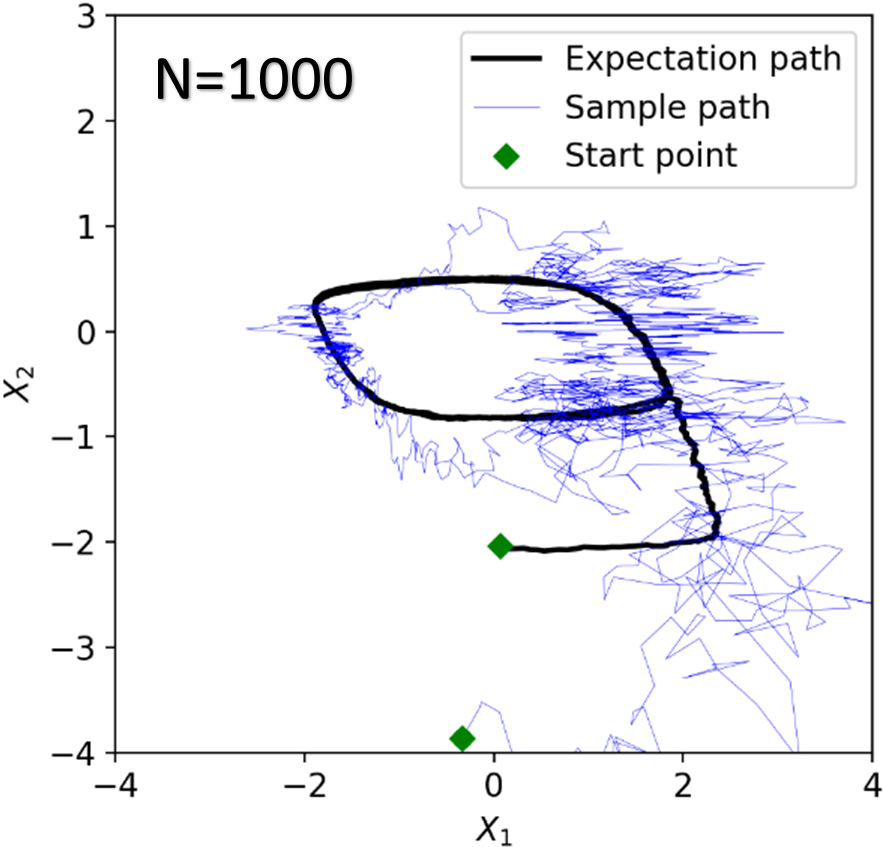}
			\caption{  $N=1000$}
		\end{subfigure}
	\setlength{\belowcaptionskip}{-0.2cm} 
    \caption{Simulation of the Vdp model \eqref{eq:example:vdp} with $X_1\sim \cN(0,4), X_2\sim \cN(-2,4)$.   (a) Strong error (rMSE) of the SSM and Taming with $T=1,~N=1000$. (b) PoC error of the SSM and Taming with $T=1,~h=0.001$.
    (c) the expectation overlays paths for the SSM with $T=20,h=0.01$ w.r.t different $N$. 
    (d)-(e)-(f)-(g) the corresponding phase-space portraits in (c) with  $N\in\{ 30,100,500,1000\} $.
    }
    \label{fig:1-3:vdp example}
\end{figure}

\subsection{Numerical complexity, discussion and various opens questions}
\label{sec:discussionOfNumerics}

Across the three examples the SSM converged and all examples recovered the theoretical convergence rate (of $1/2$ in general, and $1$ for the additive noise case). In the latter two examples, Taming failed to converge while on the first example the SSM and taming are mostly similar. The main difference between examples is the diffusion coefficient.

The SSM is robust in respect to small choices of $h$ and $N$. In all three examples, the SSM remains convergent for all choices of $h$ (even for $h=0.1$) while taming fails to converge at all. In the Van der Pol (VdP) oscillator example of Section \ref{section:example:vdp}, when comparing across different particle sizes $N$, the SSM provides a good approximation for all choices of $N$ (even for $N=30$) and the PoC result is as expected. In general, we found that the runtime of the SSM is nearly the double of Taming for the same choices of $h$, but on the other hand, Taming takes over 100-times more runtime to reach the same accuracy as the SSM (if one considers the strong error against runtime).

\subsubsection*{Computational costs and open questions for future research}

In the context of \eqref{Eq:MV-SDE Propagation}, assume one wants to simulate an $N$-particle system over a discretised finite time-domain with $M$ time points. 
Since we deal with  convolution type operator, the interaction term need to be computed for every single particle and thus, a standard explicit Euler scheme incurs a computational cost of $\cO(N^2M)$. Without the convolution component, the cost is simply $\cO(N M)$. 
For the SSM scheme in Definition \ref{def:definition of the ssm}, since it is has an implicit component there is an additional cost attached to it (more below). 

At this level, two strategies can be thought to reduce the complexity. 
The first is by controlling the cost of computing the interaction itself, these have been proposed for example in the projected particle method \cite{belomestny2018projected} or the Random Batch Method (RBM) \cite{jin2020rbm1}. To date there is no general proof of these outside Lipschitz conditions (and constant diffusion coefficient in the RBM case) for the efficacy of the method, also, it is not clear how to use these methods in combination with Newton to solve the SSM's implicit equation (more below). 
The second is to better address the competition between the number of particles $N$, as dictated by the PoC result Proposition \ref{Prop:Propagation of Chaos}, and the time-step parameter $M$ (or $1/h$). Our experimental work estimating the Propagation of chaos rate points to a convergence rate of order $1/2$ instead of the upper bound rate $1/4$ guaranteed by \eqref{eq:poc result} in Theorem \ref{Prop:Propagation of Chaos}. This result is not surprising in view of the theoretical result \cite[Lemma 5.1]{delarue2018masterCLT}; and numerically in \cite[Example 4.1]{reisinger2020adaptive}. To the best of our knowledge, no known PoC rate result covers the examples presented here and Theorem \ref{Prop:Propagation of Chaos} is presently the best known general result.

\subsubsection*{Solving the implicit step in SSM - Newton's method}

The SSM scheme contains an implicit Equation \eqref{eq:SSTM:scheme 0} that needs be solved at each timestep. It is left to the user to choose the most suitable method for given data and, in all generality, one needs an approximation scheme to solve \eqref{eq:SSTM:scheme 0}. Proposition \ref{prop: discussion of the approxi} below shows that as long as said approximation is uniformly controlled within a ball of radius $Ch$ of the true solution, then the SSM's convergence rate of Theorem \ref{theorem:SSM: strong error 1} is preserved. 

As mentioned in the initial part of Section \ref{sec:examples}, we use Newton's method (assuming extra differentiability of the involved maps) -- see Appendix \ref{appendix: discussion on Newton's method} for details where \cite[Section 4.3]{Suli2003NumericsBook} is used to guarantee convergence.  
The computation cost raises from $\cO(N^2M )$ to $\cO(\kappa N^2M)$, where $\kappa$ denotes the leading term cost of Newton after $\kappa$ iterations. In practice, we found that within $2$ to $4$ iterations (i.e., $\kappa\leq 4$) two consecutive Newton iteration are sufficiently close for the purposes of the scheme's accuracy: denoting Newton's $j^{th}$-iteration by $y^j \in \bR^{Nd}$, then $\|y^\kappa-y^{\kappa-1}\|_\infty<\sqrt{h}$ (which is the stop criteria used, see Appendix \ref{appendix: discussion on Newton's method}). 

Interacting particle systems like \eqref{Eq:MV-SDE Propagation} induce a certain structure to the associated Jacobian matrix when seen through the lens of $(\bR^{d})^N$. 
The closed form expressions provided in Appendix \ref{sec:NewtonMethodJacobian} point to a very sparse Jacobian matrix with a very specific block structure. For instance, the $\Gamma$ matrix (see Appendix \ref{sec:NewtonMethodJacobian}) is a symmetric one and is multiplied by $h/N$ making its entries very small: it stands to reason that $\Gamma$ can be removed from the Jacobian matrix as one solves the system (provided its entries can be controlled) and thus suggests that an inexact or quasi-Newton method might be computationally more efficient. 
In \cite[Section 3]{LiCheng2007MoreonNewtonMethod} the authors review \cite{SolodovSvaiter1999ConvInexactNewton} who address the case of using inexact Newton methods when the equation of interest \eqref{eq:SSTM:scheme 0} is a monotone map, which is indeed our case. The usage of Newton method is not a primary element of discussion and, as does \cite{LiCheng2007MoreonNewtonMethod}, we point the reader to the comprehensive review \cite{Martinez2000ReviewonNewton} on practical quasi-Newton methods for nonlinear equations. In conclusion, it remains to explore how different versions of Newton method for sparse systems can be used as way to reduce its computational cost but, in light of our study, we found Newton method very fast and efficient even comparatively with the Explicit Euler taming method in Section \ref{exam:StochGinzburg}.

%%%%%%%%%%%%%%%%%%%%%%%%%%%%%%%%%%%%%%%%%%
%%%%%%%%%%%%%%%%%%%%%%%%%%%%%%%%%%%%%%%%%%%%%%%%
%%%%%%%%%%%%%%%%%%%%%%%%%%%%%%%%%%%%%%%%%%%%%%%%%%%%%%%%%%
% \newpage
\section{Proof of split-step method (SSM) for MV-SDEs and interacting particle systems: convergence and stability}
\label{sec:theSSMresults}

The proof appearing in Section \ref{subsection:Proof of strong error 1} depends in no way on Theorem \ref{theorem：moment bound for the big theorm time extensions } or its proof (in Section \ref{subsection: Moment bound of the SSM}). Nonetheless, Section \ref{subsection: Moment bound of the SSM} has a strong complementary effect to fully understanding the proof in Section \ref{subsection:Proof of strong error 1}.

\subsection{Some properties of the scheme}
\label{subsection:properties of the scheme }
Recall the SSM scheme of Definition \ref{def:definition of the ssm}. In this section we clarify further the choice of $h$ and then introduce two critical results arising from the SSM's structure. Note that throughout $C>0$ is a constant always independent of $h,N,M$.

\begin{remark}[Choice of $h$]
\label{remark: choice of h}
Let Assumption \ref{Ass:Monotone Assumption} hold, the constraint on $h$ in \eqref{eq:h choice} comes from 
\eqref{eq:prop:yi-yj leq xi-xj}, \eqref{eq:sum y square leq sum x square} and \eqref{eq:se1:y-y} below, where $L_f,L_u \in \bR$ and $L_{\tilde{u} }\ge 0$. Following the notation of those inequalities, under \eqref{eq:h choice}  
for $\zeta>0$, there exists $\xi \in(0,1)$ such that $h< {\xi}/{\zeta}$ and 
\begin{align*}
   &\max \Bigg\{ \frac{1}{1-2(L_f+L_u)h},~\frac{1}{1-(4  L_f^++2  L_u+2L_{\tilde{u} }+1)h},~\frac{1}{1-(4  L_f^++2  L_u+L_{\tilde{u} }+1)h}  \Bigg\}< \frac{1}{1-\xi}.
\end{align*}
For $\zeta=0$, the result is trivial and we conclude that there exist constants $C_1,C_2$ independent of $h$
\begin{align*}
   \max \Bigg\{ \frac{1}{1-2(L_f+L_u)h},~\frac{1}{1-(4  L_f^++2  L_u+2L_{\tilde{u} }+1)h},~\frac{1}{1-(4  L_f^++2  L_u+L_{\tilde{u} }+1)h}  \Bigg\}\le C_1\le 1+C_2h.
\end{align*}
As argued in Remark \ref{rem:Constraint on h is soft} the constraint on $h$ \textit{can be lifted}.
\end{remark}

\begin{lemma}
\label{lemma:SSTM:new functions def and properties1}
Choose $h$ as in \eqref{eq:h choice}. 
Then, given any $X\in \mathbb{R}^{Nd}$ there exists a unique solution $Y\in \mathbb{R}^{Nd}$ to 
\begin{align}\label{sstmmod func def}
    Y=X+h  V(Y).
\end{align}
The solution $Y$ is a measurable map of $X$. 
\end{lemma}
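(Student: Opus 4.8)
The plan is to recognise \eqref{sstmmod func def} as the fixed-point equation for a strongly monotone, continuous map on $\bR^{Nd}$ and invoke the classical Browder--Minty / monotone-operator existence-and-uniqueness theorem (or, equivalently, a direct coercivity-plus-injectivity argument). First I would define $G:\bR^{Nd}\to\bR^{Nd}$ by $G(Y):=Y-hV(Y)$, so that \eqref{sstmmod func def} reads $G(Y)=X$. The task is then to show $G$ is a bijection of $\bR^{Nd}$ onto itself. Continuity of $G$ is immediate from continuity of $V$, which follows from the local Lipschitz bounds on $f$ and $u$ in Assumption \ref{Ass:Monotone Assumption}.

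The key estimate is strong monotonicity of $G$. Using the one-sided Lipschitz bound for $V$ established in Remark \ref{remark:OSL for the whole function / system V}, namely $\langle X^N-Y^N, V(X^N)-V(Y^N)\rangle \le \zeta'|X^N-Y^N|^2$ with $\zeta'=2L_f^++L_u+\tfrac12+\tfrac{L_{\tilde u}}{2}$ (note $\zeta'\le \zeta/2$ with $\zeta$ from \eqref{eq:h choice}), one gets for any $Y_1,Y_2\in\bR^{Nd}$
\begin{align*}
\langle Y_1-Y_2, G(Y_1)-G(Y_2)\rangle
&= |Y_1-Y_2|^2 - h\langle Y_1-Y_2, V(Y_1)-V(Y_2)\rangle
\ge (1-h\zeta')|Y_1-Y_2|^2,
\end{align*}
and the choice $h\in(0,\min\{1,1/\zeta\})$ in \eqref{eq:h choice} forces $1-h\zeta'>0$, i.e. $G$ is strongly monotone with constant $c:=1-h\zeta'>0$. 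Strong monotonicity gives at once injectivity of $G$ (if $G(Y_1)=G(Y_2)$ then $c|Y_1-Y_2|^2\le 0$, so $Y_1=Y_2$), which yields uniqueness of the solution $Y$.

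For surjectivity I would apply the standard theorem that a continuous, strongly monotone (hence coercive, since $\langle G(Y)-G(0),Y\rangle/|Y|\to\infty$) map from $\bR^{Nd}$ to itself is surjective; this is the finite-dimensional Browder--Minty theorem, provable elementarily via a degree/Brouwer argument or by minimising $Y\mapsto \tfrac12|G(Y)-X|^2$ after checking the coercive map is proper. Thus for every $X$ there is a unique $Y=:\Phi(X)$ with $G(Y)=X$, i.e. $Y=X+hV(Y)$. Finally, measurability of $X\mapsto\Phi(X)$: since $\Phi=G^{-1}$ and $G$ is continuous, $\Phi$ is a continuous map (one can also see Lipschitz continuity directly: from strong monotonicity and Cauchy--Schwarz, $c|\Phi(X_1)-\Phi(X_2)|^2\le \langle \Phi(X_1)-\Phi(X_2), X_1-X_2\rangle\le |\Phi(X_1)-\Phi(X_2)|\,|X_1-X_2|$, so $\Phi$ is $1/c$-Lipschitz), hence Borel measurable. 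The main obstacle is the surjectivity step — establishing existence globally rather than just local solvability — but this is handled cleanly once strong monotonicity and coercivity are in hand; everything else (continuity, uniqueness, measurability) is routine given the one-sided Lipschitz property already proved in Remark \ref{remark:OSL for the whole function / system V}.
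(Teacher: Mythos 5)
Your argument is correct and is essentially the route the paper takes: the paper's proof simply defers to an adaptation of \cite[Lemma 4.1]{2021SSM}, which rests on exactly the same ingredients you use — the one-sided Lipschitz bound for $V$ from Remark \ref{remark:OSL for the whole function / system V}, strong monotonicity of $Y\mapsto Y-hV(Y)$ under the stepsize restriction \eqref{eq:h choice}, and the finite-dimensional monotone-operator (Browder--Minty/coercivity) theorem for existence, with uniqueness and Lipschitz (hence measurable) dependence on $X$ following from the monotonicity constant $1-h\zeta'>0$. Your verification that $2\zeta'=4L_f^++2L_u+L_{\tilde u}+1\le \zeta$ (since $L_{\tilde u}\ge 0$) correctly closes the only point that needed checking.
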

\begin{proof}
Recall Remark \ref{remark:OSL for the whole function / system V}. The proof is an adaptation of the proof \cite[Lemma 4.1]{2021SSM} to the $\bR^{Nd}$ case.
 
%%%%%%%%%%%%%%%%%%%%

\end{proof}

\begin{proposition} [Differences relationship]
\label{prop:yi-yj leq xi-xj}
Let Assumption \ref{Ass:Monotone Assumption} hold and choose $h$ as in \eqref{eq:h choice}. For any $n\in \llbracket 0,M \rrbracket$ and $Y^{*,N}_n$ in \eqref{eq:SSTM:scheme 0}, there exists some constant $C>0$ such that for all $i,~j\in \llbracket 1,N \rrbracket$,  
\begin{align}\label{eq:prop:yi-yj leq xi-xj}
    |Y_{n}^{i,\star,N}-Y_{n}^{j,\star,N}|^2 
    &\le 
     | \hx_{n}^{i,N}-\hx_{n}^{j,N}|^2  \frac{1}{1-2(L_f+L_u)h} \le (1+Ch) | \hx_{n}^{i,N}-\hx_{n}^{j,N}|^2.
\end{align}
\end{proposition}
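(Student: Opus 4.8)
The plan is to subtract the defining equations for $Y_n^{i,\star,N}$ and $Y_n^{j,\star,N}$ in \eqref{eq:SSTM:scheme 1}, take the inner product with the difference $Y_n^{i,\star,N}-Y_n^{j,\star,N}$, and exploit the one-sided Lipschitz structure of $v(\cdot,\mu)$ recorded in Remark \ref{remark:ImpliedProperties}. Concretely, writing $\Delta Y := Y_n^{i,\star,N}-Y_n^{j,\star,N}$ and $\Delta X := \hx_n^{i,N}-\hx_n^{j,N}$, equation \eqref{eq:SSTM:scheme 1} gives
\begin{align*}
\Delta Y = \Delta X + h\big(v(Y_n^{i,\star,N},\hm_n^{Y,N}) - v(Y_n^{j,\star,N},\hm_n^{Y,N})\big),
\end{align*}
where crucially the \emph{same} empirical measure $\hm_n^{Y,N}$ appears in both terms, so there is no measure-difference contribution to control. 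Taking $\langle \Delta Y, \cdot\rangle$ of both sides and using that $v(\cdot,\mu)$ satisfies $\langle x-x', v(x,\mu)-v(x',\mu)\rangle \le (L_f+L_u)|x-x'|^2$ (the convolution-plus-confinement one-sided Lipschitz estimate from Remark \ref{remark:ImpliedProperties}), one obtains
\begin{align*}
|\Delta Y|^2 = \langle \Delta Y, \Delta X\rangle + h\langle \Delta Y, v(Y_n^{i,\star,N},\hm_n^{Y,N}) - v(Y_n^{j,\star,N},\hm_n^{Y,N})\rangle \le \langle \Delta Y, \Delta X\rangle + h(L_f+L_u)|\Delta Y|^2.
\end{align*}

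Next I would apply Cauchy--Schwarz (or Young's inequality in the form $\langle \Delta Y,\Delta X\rangle \le \tfrac12|\Delta Y|^2 + \tfrac12|\Delta X|^2$, though Cauchy--Schwarz alone suffices here) to the first term. Using $\langle \Delta Y,\Delta X\rangle \le |\Delta Y|\,|\Delta X|$ and rearranging yields $(1-2(L_f+L_u)h)|\Delta Y|^2 \le |\Delta X|^2$ after the standard manipulation; alternatively and more cleanly, from $|\Delta Y|^2 - h(L_f+L_u)|\Delta Y|^2 \le |\Delta Y||\Delta X|$ one divides by $|\Delta Y|$ (the case $\Delta Y=0$ being trivial) to get $(1-h(L_f+L_u))|\Delta Y| \le |\Delta X|$, then squares. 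Either route gives
\begin{align*}
|\Delta Y|^2 \le \frac{1}{1-2(L_f+L_u)h}|\Delta X|^2,
\end{align*}
which is the first inequality in \eqref{eq:prop:yi-yj leq xi-xj}. Note the choice of $h$ in \eqref{eq:h choice} includes the bound $h < 1/\zeta$ with $\zeta \ge 2(L_f+L_u)$, so $1 - 2(L_f+L_u)h > 0$ and the fraction is well-defined and positive.

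Finally, the second inequality $\tfrac{1}{1-2(L_f+L_u)h} \le 1+Ch$ follows immediately from Remark \ref{remark: choice of h}, which records precisely that under \eqref{eq:h choice} the quantity $\tfrac{1}{1-2(L_f+L_u)h}$ is bounded by $1+C_2 h$ for a constant $C_2$ independent of $h$; one sets $C := C_2$. I do not anticipate a genuine obstacle here: the only subtlety — and the reason the SSM structure is being highlighted — is that the interaction measure $\hm_n^{Y,N}$ is held fixed across particles $i$ and $j$ in step \eqref{eq:SSTM:scheme 1}, which is exactly what kills the otherwise-problematic superlinear measure-dependence and lets the plain one-sided Lipschitz bound close the estimate. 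The remaining work is the short chain of elementary inequalities above plus invoking the already-established Remark \ref{remark: choice of h}.
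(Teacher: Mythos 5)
Your proposal is correct and follows essentially the same route as the paper: subtract the two copies of \eqref{eq:SSTM:scheme 1} (with the common empirical measure $\hm^{Y,N}_n$), pair with $Y_{n}^{i,\star,N}-Y_{n}^{j,\star,N}$, apply the one-sided Lipschitz bound $\langle x-x',v(x,\mu)-v(x',\mu)\rangle\le (L_f+L_u)|x-x'|^2$ from Remark \ref{remark:ImpliedProperties} together with Young's inequality, rearrange, and invoke Remark \ref{remark: choice of h} for the uniform constant. The paper uses exactly the Young's-inequality variant you describe; your alternative Cauchy--Schwarz route is an equivalent (in fact marginally sharper) way to close the same estimate.
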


\begin{proof}
Take $n\in \llbracket 0,M \rrbracket$, $i,~j\in \llbracket 1,N \rrbracket$. Using Remark \ref{remark:ImpliedProperties} and Young's inequality we have
\begin{align*}
   &
   |Y_{n}^{i,\star,N}-Y_{n}^{j,\star,N}|^2
   \\
   &=
    \Big\langle  
    Y_{n}^{i,\star,N}-Y_{n}^{j,\star,N}
    ,
    \hx_{n}^{i,N}-\hx_{n}^{j,N}
    \Big\rangle
    +
    \Big\langle  
    Y_{n}^{i,\star,N}-Y_{n}^{j,\star,N}
    ,
     v\left(Y_{n}^{i,\star,N},\hm^{Y,N}_n\right)
     -
     v\left(Y_{n}^{j,\star,N},\hm^{Y,N}_n\right)
    \Big\rangle h
    \\
    &\le
    \frac{1}{2} |Y_{n}^{i,\star,N}-Y_{n}^{j,\star,N} |^2
    +
    \frac{1}{2} | \hx_{n}^{i,N}-\hx_{n}^{j,N}|^2
    +
    (L_f+L_u) |Y_{n}^{i,\star,N}-Y_{n}^{j,\star,N}|^2 h. 
\end{align*}
The argument regarding the uniformity of the constant $C$ in  regards to the parameters $h,N,M$    follows from Remark \ref{remark: choice of h}.
\end{proof}

\begin{proposition} [Summation relationship]
\label{prop:sum y square leq sum x square}
Let Assumption \ref{Ass:Monotone Assumption}  hold. Choose $h$ as in \eqref{eq:h choice}. For the process in \eqref{eq:SSTM:scheme 1} there exists a constant $C>0$ (independent of $h,N,M$) such that, for all $i\in \llbracket 1,N \rrbracket,~n\in \llbracket 0,M \rrbracket$,   
\begin{align}\label{eq:sum y square leq sum x square}
    \frac{1}{N}\sum_{i=1}^N |Y_{n}^{i,\star,N}|^2 
    &\le  
    Ch+(1+Ch) \frac{1}{N}\sum_{i=1}^N  |\hx_{n}^{i,N}|^2. 
\end{align}
\end{proposition}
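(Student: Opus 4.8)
The plan is to test the fixed–point relation \eqref{eq:SSTM:scheme 1} against $Y_{n}^{i,\star,N}$ itself and sum over the particle index $i$; the point is that the $1/N$ weight carried by the empirical measure $\hm^{Y,N}_n$ exactly compensates the factor $N$ that the double summation over the convolution kernel would otherwise create, so the resulting bound is $N$-free. Fix $n$ and abbreviate $Y^i=Y_{n}^{i,\star,N}$, $X^i=\hx_{n}^{i,N}$ and $\hm=\hm^{Y,N}_n=\frac1N\sum_{j=1}^N\delta_{Y^j}$ (the vector $Y^{\star,N}_n$ is well-defined by Lemma \ref{lemma:SSTM:new functions def and properties1}). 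Recalling $v(y,\hm)=\frac1N\sum_{j=1}^Nf(y-Y^j)+u(y,\hm)$, take the inner product of \eqref{eq:SSTM:scheme 1} with $Y^i$ and sum over $i\in\llbracket 1,N\rrbracket$:
\begin{align*}
\sum_{i=1}^N|Y^i|^2
= \sum_{i=1}^N\langle Y^i,X^i\rangle
+ \frac{h}{N}\sum_{i=1}^N\sum_{j=1}^N\langle Y^i,f(Y^i-Y^j)\rangle
+ h\sum_{i=1}^N\langle Y^i,u(Y^i,\hm)\rangle .
\end{align*}

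The heart of the argument — and the step I expect to be the main obstacle — is to bound the double sum $S_f:=\frac1N\sum_{i,j}\langle Y^i,f(Y^i-Y^j)\rangle$ by a multiple of $\sum_i|Y^i|^2$ with a constant uniform in $N$. Since $f$ is odd, relabelling $(i,j)\mapsto(j,i)$ gives $S_f=-\frac1N\sum_{i,j}\langle Y^j,f(Y^i-Y^j)\rangle$; averaging this with the original expression yields the symmetrised form $S_f=\frac1{2N}\sum_{i,j}\langle Y^i-Y^j,f(Y^i-Y^j)\rangle$. Applying the one-sided Lipschitz estimate $\langle x,f(x)\rangle\le L_f|x|^2\le L_f^+|x|^2$ from Remark \ref{remark:ImpliedProperties}, and then the elementary identity $\sum_{i,j}|Y^i-Y^j|^2=2N\sum_i|Y^i|^2-2|\sum_iY^i|^2\le 2N\sum_i|Y^i|^2$, we obtain $S_f\le \frac{L_f^+}{2N}\,2N\sum_i|Y^i|^2=L_f^+\sum_i|Y^i|^2$: the factor $N$ has cancelled precisely because of the $1/N$ normalisation in the empirical measure.

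For the measure-dependent term I would use $\langle x,u(x,\mu)\rangle\le C_u+\widehat L_u|x|^2+L_{\tilde u}W^{(2)}(\mu,\delta_0)^2$ from Remark \ref{remark:ImpliedProperties} with $\mu=\hm$, together with $W^{(2)}(\hm,\delta_0)^2=\frac1N\sum_j|Y^j|^2$, so that $h\sum_i\langle Y^i,u(Y^i,\hm)\rangle\le h(\widehat L_u+L_{\tilde u})\sum_i|Y^i|^2+Ch$; for the cross term, Young's inequality gives $\sum_i\langle Y^i,X^i\rangle\le\frac12\sum_i|Y^i|^2+\frac12\sum_i|X^i|^2$. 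Collecting these estimates and recalling $\widehat L_u=L_u+\tfrac12$ yields
\begin{align*}
\Big(1-h\big(2L_f^++2L_u+2L_{\tilde u}+1\big)\Big)\sum_{i=1}^N|Y^i|^2\le \sum_{i=1}^N|X^i|^2+Ch .
\end{align*}

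Finally, since $2L_f^++2L_u+2L_{\tilde u}+1\le 4L_f^++2L_u+2L_{\tilde u}+1\le\zeta$ and $h<1/\zeta$ by \eqref{eq:h choice}, the coefficient on the left is strictly positive, and by Remark \ref{remark: choice of h} its reciprocal is bounded by $1+Ch$ with $C$ independent of $h,N,M$; multiplying through gives \eqref{eq:sum y square leq sum x square}. The uniformity of the constant follows exactly as in the proof of Proposition \ref{prop:yi-yj leq xi-xj}.
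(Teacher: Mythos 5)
Your proof is correct and follows essentially the same route as the paper's: symmetrise the double sum over the convolution kernel using the oddness of $f$, apply the one-sided Lipschitz bound, control the $u$-term via Remark \ref{remark:ImpliedProperties} together with $W^{(2)}(\hm^{Y,N}_n,\delta_0)^2=\frac1N\sum_j|Y^j|^2$, use Young's inequality on the cross term, and invert the left-hand coefficient via the constraint \eqref{eq:h choice} and Remark \ref{remark: choice of h}. The only (immaterial) differences are that your estimate of the symmetrised sum is sharper by a factor of two than the paper's, and that — exactly as in the paper's own computation — the additive constant coming from $C_u$ is really of order $Nh$ rather than $h$, which is harmless since the proposition is only ever invoked after normalising by $1/N$.
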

\begin{proof}
From \eqref{eq:SSTM:scheme 2} we have 
 \begin{align}
&\frac{1}{N}\sum_{i=1}^N |Y_{n}^{i,\star,N}|^2 =\frac{1}{N}\sum_{i=1}^N \bigg\{\Big\langle Y_{n}^{i,\star,N},\hx_{n}^{i,N} \Big\rangle + \Big\langle  Y_{n}^{i,\star,N}, v(Y_{n}^{i,\star,N},\hm^{Y,N}_n) \Big\rangle   h \bigg\}
\nonumber\\
\label{eq:ystar2}
&\le\frac{1}{N}\sum_{i=1}^N  \bigg\{\frac{1}{2}|Y_{n}^{i,\star,N}|^2 + \frac{1}{2}|\hx_{n}^{i,N}|^2 
+\Big\langle  Y_{n}^{i,\star,N}, u(Y_{n}^{i,\star,N},\hm^{Y,N}_n) \Big\rangle   h
+ 
\frac{h}{N}\sum_{j=1}^N \Big\langle  Y_{n}^{i,\star,N}, f(Y_{n}^{i,\star,N}-Y_{n}^{j,\star,N}) \Big\rangle
\bigg\}.
\end{align} 
   By Assumption \ref{Ass:Monotone Assumption} and Young's inequality, we have  
\begin{align*}
    \frac{1}{N^2}\sum_{i=1}^N\sum_{j=1}^N \Big\langle  Y_{n}^{i,\star,N}, f(&Y_{n}^{i,\star,N}-Y_{n}^{j,\star,N}) \Big\rangle
    =\frac{1}{2 N^2}\sum_{i=1}^N\sum_{j=1}^N \Big\langle  Y_{n}^{i,\star,N}- Y_{n}^{j,\star,N}, f(Y_{n}^{i,\star,N}-Y_{n}^{j,\star,N}) \Big\rangle
    \\ 
    &\le \frac{1}{2 N^2}\sum_{i=1}^N\sum_{j=1}^N
    L_f |Y_{n}^{i,\star,N}-Y_{n}^{j,\star,N}|^2
    \le 
       \frac{2 L_f^+}{ N}\sum_{i=1}^N      |Y_{n}^{i,\star,N}|^2, \quad L_f^+=\max\{L_f,0\}.
\end{align*}
Plugging this into \eqref{eq:ystar2} and using Remark \ref{remark:ImpliedProperties} with $\Lambda=  4  L_f^++2  L_u+2L_{\tilde{u} }+1$, we have 
  \begin{align*}
\frac{1}{N}\sum_{i=1}^N |Y_{n}^{i,\star,N}|^2 
&
\le 
\frac{1}{N}\sum_{i=1}^N  \bigg\{|\hx_{n}^{i,N}|^2 
+
 2h \big(   
2 L_f^+ |Y_{n}^{i,\star,N}|^2
+C_u +\widehat L_u|Y_{n}^{i,\star,N}|^2+ L_{\tilde{u} } W^{(2)}(\hm^{Y,N}_n,\delta_0)^2 \big)
\bigg\}
\\
&\le \frac{1}{N}
\sum_{i=1}^N  \bigg\{|\hx_{n}^{i,N}|^2 
+
 2h \big(   
2 L_f^+ |Y_{n}^{i,\star,N}|^2
+C_u +\widehat L_u|Y_{n}^{i,\star,N}|^2 
+
\frac{L_{\tilde{u} } }{N}\sum_{j=1}^N |Y_{n}^{j,\star,N}|^2
\big) 
\bigg\}
\\
&\le \frac{1}{1-\Lambda h} \frac{1}{N} \sum_{i=1}^N  \bigg\{|\hx_{n}^{i,N}|^2  +2C_u h   \bigg\}
=
\frac{1}{N} \sum_{i=1}^N \bigg \{ |\hx_{n}^{i,N}|^2 (1+ h\frac{\Lambda }{1-\Lambda h}) +
\frac{2C_u h}{1-\Lambda h}
\bigg \} .
\end{align*} 
Remark \ref{remark: choice of h} yields the argument.
 \end{proof}

From Lemma \ref{lemma:SSTM:new functions def and properties1} we know a unique solution, $Y_{n}^{\star,N}$, to \eqref{eq:SSTM:scheme 0} as a function of $\hat{ X}_{n}^{N}$ exists. 
We next show that the scheme we proposed in \eqref{eq:SSTM:scheme 0}-\eqref{eq:SSTM:scheme 2}  is square integrable. 
\begin{proposition}[Second moment bounds of SSM] 
\label{prop: discrete second moment bound}
Let the setting of Theorem \ref{theorem:SSM: strong error 1} hold. Let $m\geq 2$ where $\hx^{i,N}_0\in L^m_0(\bR^d)$ for all $i\in\llbracket 1,N\rrbracket$, then there exists a constant $C>0$ independent of $h,N,M$ (but depending on $T$) such that    
\begin{align*} 
    \sup_{i\in \llbracket 1,N \rrbracket } \sup_{n\in \llbracket 0,M \rrbracket }
    \bE \big[ |\hx_{n}^{i,N}|^{2}  \big]
    +
     \sup_{i\in \llbracket 1,N \rrbracket } \sup_{n\in \llbracket 0,M-1 \rrbracket }
    \bE \big[ |Y_{n}^{i,\star,N}|^{2}  \big]
    % &\le C \big(  1+ \sup_{i\in \llbracket 1,N \rrbracket } \bE\big[\, |\hx_{0}^{i,N}|^{2}\big] \big) <\infty.
    &\le C \big(  1+ \bE\big[\, |\hx_{0}^{\cdot,N}|^{2}\big] \big) <\infty. 
\end{align*}
\end{proposition}
\begin{proof}
Let $i\in\llbracket 1,N\rrbracket$, $n\in\llbracket 0,M-1\rrbracket$, by Assumption \ref{Ass:Monotone Assumption}, from \eqref{eq:SSTM:scheme 0}-\eqref{eq:SSTM:scheme 2} and Proposition \ref{prop:sum y square leq sum x square}, since the particles are identically distributed, we have

\begin{align*}
    \bE \big[ 1+|Y_{n}^{i,\star,N}|^2| \big]  \leq 
    \bE\big[  1+|\hx_{n}^{i,N}|^2 \big] (1+Ch).
\end{align*}
Similar to \cite[Proposition 4.5]{2021SSM},  we have
\begin{align*}
     |\hx_{n+1}^{i,N}|^2 \leq 
     |\hx_{n}^{i,N}|^2
    +
    C \Big( 1+ |Y_{n}^{i,\star,N}|^2+\frac{1}{N} \sum_{j=1}^N |Y_{n}^{j,\star,N}|^2
    \Big)  ( h+ | \Delta W_{n}^i|^2 ) 
    + 2\Big\langle Y_{n}^{i,\star,N}, \sigma(t_n,Y_{n}^{i,\star,N},\hm^{Y,N}_n) \Delta W_{n}^i  \Big\rangle.
\end{align*}
Taking expectations and summing $1$ to both sides, Young's inequality yields 
\begin{align*}
   \bE \big[ 1+|\hx_{n+1}^{i,N}|^2\big]  
   \leq  
   \bE \big[1+ |\hx_{n}^{i,N}|^2 \big] (1+Ch).
\end{align*}
By induction and using that the particles are identically distributed, we conclude that 
\begin{align}
   \sup_{i\in \llbracket 1,N \rrbracket } \sup_{n\in \llbracket 0,M \rrbracket } \bE \big[ 1+ |\hx_{n}^{i,N}|^2\big]  
   \leq   
   \sup_{i\in \llbracket 1,N \rrbracket }
   \bE \big[ 1+|\hx_{0}^{i,N}|^2\big] (1+Ch)^M
   \leq  
%   \sup_{i\in \llbracket 1,N \rrbracket }
   (1+\bE \big[ |\hx_{0}^{\cdot,N}|^2\big]) e^{CT}<\infty, 
\end{align}
where we used $Mh=T$ and that the $\{\hx_{0}^{i,N}\}_i$ are i.i.d. \\
The inequality for $ \sup_{i\in \llbracket 1,N \rrbracket } \sup_{n\in \llbracket 0,M-1 \rrbracket } \bE\big[ | Y_{n}^{i,\star,N} |^{2}  \big]  $ follows using similar argument.
\end{proof}

\color{black}

We provide the following auxiliary proposition to deal with the cross products terms in the later proofs.
\begin{proposition} 
\label{prop:auxilary for multipy of r.v.}
Take $N\in\bN$, for all $ i\in \llbracket 1,N \rrbracket$, for any given $p\in\bN $, sequences $\big\{ \{a_i\}_i: \sum_{i=1}^N a_i=p,~ a_i\in\bN \big\}$ and any collection of identically distributed $L^p$-integrable random variables $\{X_i\}_i$ we have 
\begin{align*}
    \bE \big[       \prod_{i=1}^N |X_i|^{a_i}  \big] 
    \le  \bE \big[   |X_1|^p     \big].
\end{align*}
\end{proposition}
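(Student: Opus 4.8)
The plan is to invoke the generalized H\"older inequality with exponents tailored to the multi-index $\{a_i\}$, and then use the fact that the $X_i$ are identically distributed. First I would dispose of the indices with $a_i = 0$: the corresponding factors $|X_i|^{a_i}$ equal $1$ and contribute nothing to the product, so it suffices to consider $I := \{ i \in \llbracket 1,N\rrbracket : a_i \geq 1\}$ and the product $\prod_{i\in I}|X_i|^{a_i}$. (If $I = \emptyset$, which forces $p=0$, both sides equal $1$ and there is nothing to prove, so assume $p\geq 1$.)

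For each $i \in I$ set $r_i := p/a_i$. Since $1 \leq a_i \leq p$ we have $r_i \in [1,\infty)$, and
\begin{align*}
\sum_{i \in I} \frac{1}{r_i} = \sum_{i\in I} \frac{a_i}{p} = \frac{1}{p}\sum_{i=1}^N a_i = 1.
\end{align*}
Hence the generalized H\"older inequality applies and yields
\begin{align*}
\bE\Big[\prod_{i\in I}|X_i|^{a_i}\Big]
\leq \prod_{i\in I}\Big(\bE\big[\,|X_i|^{a_i r_i}\big]\Big)^{1/r_i}
= \prod_{i\in I}\Big(\bE\big[\,|X_i|^{p}\big]\Big)^{a_i/p},
\end{align*}
using $a_i r_i = p$. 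Because the random variables $\{X_i\}$ are identically distributed, $\bE[\,|X_i|^p] = \bE[\,|X_1|^p]$ for every $i$, which is finite by the $L^p$-integrability hypothesis. Therefore the right-hand side collapses to $\big(\bE[\,|X_1|^p]\big)^{\sum_{i\in I}a_i/p} = \bE[\,|X_1|^p]$, which is the claim.

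There is no real obstacle here; the only points requiring a moment's care are checking that the H\"older exponents are admissible (i.e.\ $r_i \geq 1$ and $\sum_{i\in I} 1/r_i = 1$), and observing that equality of distributions — not independence — is exactly what is needed to replace each $\bE[\,|X_i|^p]$ by $\bE[\,|X_1|^p]$. I would keep the write-up short, essentially the three displays above.
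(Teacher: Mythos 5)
Your proof is correct, but it takes a genuinely different route from the paper's. You apply the generalized H\"older inequality at the level of expectations, with exponents $r_i=p/a_i$ chosen so that $\sum_i 1/r_i=1$, and then use identical distribution to collapse the product of norms into $\bE[|X_1|^p]$. The paper instead works pointwise: it applies the weighted Young (arithmetic--geometric mean) inequality to pairs, $|X_i|^{a_i}|X_j|^{a_j}\le \tfrac{a_i}{a_i+a_j}|X_i|^{a_i+a_j}+\tfrac{a_j}{a_i+a_j}|X_j|^{a_i+a_j}$, and merges the factors two at a time by induction before taking expectations and invoking identical distribution. The two arguments are essentially dual: yours is shorter and avoids the induction entirely, at the cost of citing the $n$-factor H\"older inequality; the paper's is more elementary (only the two-term Young inequality is needed) and produces a pointwise domination of the product by a convex combination of $p$-th powers, but requires the small bookkeeping of the induction. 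Your handling of the degenerate indices $a_i=0$ and of the admissibility of the exponents is careful and complete, and, as you note, only equality of distributions (not independence) is used in either argument.
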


\begin{proof}
Using the notation above, by Young's inequality, for any $ i,j\in \llbracket 1,N \rrbracket$ we have 
\begin{align*}
    |X_i|^{a_i}|X_j|^{a_j}   
    \le    
    \frac{a_i}{a_i+a_j}|X_i|^{a_i+a_j}+\frac{a_j}{a_i+a_j}|X_j|^{a_i+a_j}.
\end{align*}
Thus, by induction and using that the $\{X_i\}_i$ are identically distributed, the result follows.
\end{proof}

\subsection[Proof of the pointwise convergence result]{Proof of Theorem \ref{theorem:SSM: strong error 1}: the pointwise mean-square convergence result}\label{subsection:Proof of strong error 1}
% \xbox{For this proof it needs $m\ge 4q+4$}
We provide here the proof of Theorem \ref{theorem:SSM: strong error 1}. Throughout this section, we follow the notation introduced in Theorem \ref{theorem:SSM: strong error 1} and let Assumption \ref{Ass:Monotone Assumption} hold, $h$ is chosen as in \eqref{eq:h choice},  $m\ge 4q+4$, where $m$ is defined in \eqref{Eq:General MVSDE} and $q$ is defined in Assumption \ref{Ass:Monotone Assumption}.    Note that throughout $C>0$ is a constant always independent of $h,N,M$ but possibly depending on $T$ and $m$. 
\begin{proof}
Let $i\in\llbracket 1,N\rrbracket$, $n\in\llbracket 0,M-1\rrbracket$,  $s\in[0,h]$, $t_n=nh$ and $p\ge 2$ with $m\ge 4q+4$,  using same notation as in \eqref{Eq:MV-SDE Propagation}, define the following auxiliary process 
\begin{align*}
  X_{n}^{i,N}&=X_{t_n}^{i,N},\quad \Delta X^i_{t_n+s} =X_{t_n+s}^{i,N}-\hx_{t_n+s}^{i,N} ,\quad t_n=nh, \quad
    \Delta W_{n,s}^i=W_{t_n+s}^i-W_{t_n}^i,
  \\
    Y_{n}^{i,X,N} &=X_{n}^{i,N}+h v (Y_{n}^{i,X,N},\mu^{Y,X,N}_n ),  
     \quad 
 \quad 
  \mu^{Y,X,N}_n(\dd x):= \frac1N \sum_{j=1}^N \delta_{Y_{n}^{j,X,N}}(\dd x).
%   \\
%   X_{t_n+s}^{i,N}
%     &
%     =
%     X_{n}^{i,N}+
%     \big( v (Y_n^{i,X,N},\mu^{Y,X,N}_{n} )
%     +b(t_n,Y_n^{i,X,N},\mu^{Y,X,N}_{n}) \big) s 
%     + \sigma(t_n,Y_n^{i,X,N},\mu^{Y,X,N}_{n}) \Delta W_{n,s}^i. 
\end{align*} 
% Where $Y_{n}^{i,X,N}$  and $\mu^{Y,X,N}_n$ are useful for later proof.
For all $n\in\llbracket 0,M-1\rrbracket,~i\in \llbracket 1,N \rrbracket,~r\in[0,h]$, from \eqref{eq: scheme continous extension in SDE form}, we have
\begin{align*}
    &|\Delta X^i_{t_n+r}|^2 
    = 
    \Big| 
    \Delta X^i_{t_n} 
    +
    \int_{t_n}^{t_{n}+r}  \big( v(X_{s}^{i,N},\mu_{s}^{X,N})-v(Y_n^{i,X,N},\mu^{Y,X,N}_{n})   
    \big) \dd s  
    \\
    &+ 
    \int_{t_n}^{t_{n}+r}  \big( v(Y_n^{i,X,N},\mu^{Y,X,N}_{n}) -v\left(Y_{n}^{i,\star,N},\hm^{Y,N}_n\right)
    \big) \dd s   
    + 
    \int_{t_n}^{t_{n}+r}  \big( b(s,X_{s}^{i,N},\mu_{s}^{X,N})-b(t_n,Y_n^{i,X,N},\mu^{Y,X,N}_{n})   
    \big) \dd s
    \\
        &+ \int_{t_n}^{t_{n}+r}  \big(  b(t_n,Y_n^{i,X,N},\mu^{Y,X,N}_{n})   
        - b(t_n,Y_{n}^{i,\star,N},\hm^{Y,N}_n)
    \big) \dd s
    \\
    &
    + 
    \int_{t_n}^{t_{n}+r}  \big( \sigma(s,X_{s}^{i,N},\mu_{s}^{X,N})
    -\sigma(t_n,Y_n^{i,X,N},\mu^{Y,X,N}_{n})   
    \big) \dd W^i_s
    \\
        &
        + \int_{t_n}^{t_{n}+r}  \big(  \sigma(t_n,Y_n^{i,X,N},\mu^{Y,X,N}_{n})   
        - \sigma(t_n,Y_{n}^{i,\star,N},\hm^{Y,N}_n)
    \big)  \dd W^i_s \Big|^2 .
\end{align*} 
Taking expectations on both side, using Jensen's inequality and It\^o's isometry, we have  
\begin{align}
\label{eq:se1:ultimate eqaution IIII}
    \bE \big[  |\Delta X^i_{t_n+r}|^2 \big]
    &\le
    (1+h) I_1+(1+\frac{1}{h}) I_2+2 I_3+2 I_4,
\end{align}        
where the terms $I_1,I_2,I_3,I_4$ are defines as follows 
\begin{align}
     I_1= \bE \Big[        \Big|& 
    \Delta X^i_{t_n}
    + 
    \int_{t_n}^{t_{n}+r}  \big( v(Y_n^{i,X,N},\mu^{Y,X,N}_{n}) -v\left(Y_{n}^{i,\star,N},\hm^{Y,N}_n\right)
    \big) \dd s  
    \\
    &+ \int_{t_n}^{t_{n}+r}  \big(  b(t_n,Y_n^{i,X,N},\mu^{Y,X,N}_{n})   
    - b(t_n,Y_{n}^{i,\star,N},\hm^{Y,N}_n)
    \big) \dd s
     \Big|^2\Big],
 \end{align}
 \begin{align}
     I_2=  \bE \Big[ \Big| & \int_{t_n}^{t_{n}+r}  \big( v(X_{s}^{i,N},\mu_{s}^{X,N})-v(Y_n^{i,X,N},\mu^{Y,X,N}_{n})   
    \big) \dd s  
    \\
    &+ 
    \int_{t_n}^{t_{n}+r}  \big( b(s,X_{s}^{i,N},\mu_{s}^{X,N})-b(t_n,Y_n^{i,X,N},\mu^{Y,X,N}_{n})   
    \big) \dd s     
     \Big|^2\Big],
 \end{align}
 \begin{align}
     I_3=&  \bE \Big[  \Big| 
     \int_{t_n}^{t_{n}+r}  \big( \sigma(s,X_{s}^{i,N},\mu_{s}^{X,N})
    -\sigma(t_n,Y_n^{i,X,N},\mu^{Y,X,N}_{n})   
    \big) \dd W^i_s
     \Big|^2\Big],
 \end{align}
 \begin{align}
     I_4=&  \bE \Big[  \Big| 
     \int_{t_n}^{t_{n}+r}  \big(  \sigma(t_n,Y_n^{i,X,N},\mu^{Y,X,N}_{n})   
        - \sigma(t_n,Y_{n}^{i,\star,N},\hm^{Y,N}_n)
    \big)  \dd W^i_s
     \Big|^2\Big].
\end{align}

For $I_1$, Young's inequality yields 
\begin{align}
    \nonumber
    I_1& =
    \bE \Big[       
     \Big|
   X_{n}^{i,N}+
    \big(  
    V_n^{Y,i}
    +b(t_n,Y_n^{i,X,N},\mu^{Y,X,N}_{n}) \big) r 
    -
     \hx_{n}^{i,N}-
    \big(  
    V_n^{*,i}
    +b(t_n,Y_n^{i,\star,,N},\hm^{Y,N}_{n}) \big) r
     \Big|^2 \Big]
     \\
     \label{eq:I1:term1}
     &\le \bE \Big[        
      \Big|
   X_{n}^{i,N}-
     \hx_{n}^{i,N}
     +
    \big(  
    V_n^{Y,i}-V_n^{*,i}\big)
    r 
     \Big|^2\Big](1+\frac{h}{2}) 
    + \bE \Big[        \Big|
    b(t_n,Y_n^{i,X,N},\mu^{Y,X,N}_{n})  
    -b(t_n,Y_n^{i,\star,,N},\hm^{Y,N}_{n})  
     \Big|^2 \Big](\frac{h}{2}+h),
\end{align}   
where  $ V_n^{Y,i}$ and $V_n^{*,i}$ stand for   
     $V_n^{Y,i}=    v (Y_n^{i,X,N},\mu^{Y,X,N}_{n} )$ and   
 $V_n^{*,i} = v (Y_n^{i,\star,N},\hm^{Y,N}_{n} )$ respectively. 

For the first term of \eqref{eq:I1:term1}, recall the SSM defined in \eqref{eq:SSTM:scheme 1}. We have
\begin{align*}
    \bE \Big[        
      \Big|
   X_{n}^{i,N}-
     \hx_{n}^{i,N}
    + & 
    \big( V_n^{Y,i}
    -
     V_n^{*,i} \big)r 
     \Big|^2\Big]
     \\
     =&\bE \Big[ 
     \Big\langle X_{n}^{i,N}-
     \hx_{n}^{i,N}
     +
    \big(  V_n^{Y,i}  -  V_n^{*,i} \big)r,
    Y_n^{i,X,N}-
     Y_n^{i,\star,N}
     +
    \big(  V_n^{Y,i}  -  V_n^{*,i} \big)(r-h)
    \Big\rangle\Big]
    \\
    =&\bE \Big[ 
     \Big\langle X_{n}^{i,N}-
     \hx_{n}^{i,N},
    Y_n^{i,X,N}-
     Y_n^{i,\star,N}
    \Big\rangle\Big]
    +
    \bE \Big[ 
     \Big\langle X_{n}^{i,N}-
     \hx_{n}^{i,N},
    \big(  V_n^{Y,i}  -  V_n^{*,i} \big)
    \Big\rangle\Big](r-h)
    \\
    &
    +
    \bE \Big[ 
     \Big\langle  Y_n^{i,X,N}-
     Y_n^{i,\star,N},
    \big(  V_n^{Y,i}  -  V_n^{*,i} \big)
    \Big\rangle\Big]r
    -r(h-r)
     \bE \Big[ 
    \Big|  V_n^{Y,i}  -  V_n^{*,i} \Big|^2\Big].
\end{align*}
Using the relationship that \eqref{eq:SSTM:scheme 1} induces, we have 
\begin{align*}
    V_n^{Y,i}  -  V_n^{*,i} =\frac{  Y_n^{i,X,N}-X_{n}^{i,N}
     +Y_n^{i,\star,N}-
     \hx_{n}^{i,N}}{h}.
\end{align*}
We first deduce that
\begin{align}
 \nonumber
    \bE \Big[        
      \Big|
      &
   X_{n}^{i,N}-
     \hx_{n}^{i,N}
     +
    \big( V_n^{Y,i}
    -
     V_n^{*,i} \big)r 
     \Big|^2\Big] 
    =
    \bE \big[ 
     | X_{n}^{i,N}-
     \hx_{n}^{i,N}|^2
   \big]  
    +
    \bE \Big[ 
     \Big\langle X_{n}^{i,N}-
     \hx_{n}^{i,N},
   V_n^{Y,i} -
     V_n^{*,i}
    \Big\rangle\Big] 2r
    \\ \nonumber
    & \qquad
    +
    \bE \Big[ 
     \Big\langle  (Y_n^{i,X,N}-
     Y_n^{i,\star,N})-(X_{n}^{i,N}-
     \hx_{n}^{i,N} ),
       V_n^{Y,i}  -  V_n^{*,i}  
    \Big\rangle\Big]\frac{r^2}{h}
    \\ 
    &~= 
    \bE \big[ 
     | X_{n}^{i,N}-
     \hx_{n}^{i,N}|^2
   \big]  (1-C_{h,r})
%   \frac{2h-2hr+r^2}{2h} 
    +
    \bE \big[ 
     | 
   Y_n^{i,X,N}-
     Y_n^{i,\star,N}
    |^2\big] C_{h,r}
    % \frac{2hr-r^2}{2h}
    %  \\
     \label{eq:se1: y-y v-v source 0}
    %  &\qquad
     +
    \bE \Big[ 
     \Big\langle  Y_n^{i,X,N}-
     Y_n^{i,\star,N},V_n^{Y,i}  -  V_n^{*,i} 
    \Big\rangle\Big]\frac{r^2}{h}.
\end{align}
Where $C_{h,r}=(2hr-r^2)/{2h^2}$. Also, for the second term of \eqref{eq:I1:term1}, using Assumption \ref{Ass:Monotone Assumption} and that the particles are identically distributed 
\begin{align}
\nonumber
    \bE \Big[   \Big|b(t_n,Y_n^{i,X,N},&~\mu^{Y,X,N}_{n})
    -b(t_n,Y_{n}^{i,\star,N},\hm^{Y,N}_n)\Big|^2 \Big]
    \\
    \nonumber
    \le&
    C\bE \Big[   |Y_n^{i,X,N}-Y_n^{i,\star,N}|^2+ W^{(2)}(\mu^{Y,X,N}_{n},\hm^{Y,N}_n)\Big]
    \\
    \label{eq:se1:by-by}
    \le&
    C\bE \big[   |Y_n^{i,X,N}-Y_n^{i,\star,N}|^2\big]
    + C \bE \Big[  \frac{1}{N} \sum_{j=1}^N |Y_n^{j,X,N}-Y_n^{j,\star,N}|^2\Big]
    \le C\bE \big[   |Y_n^{i,X,N}-Y_n^{i,\star,N}|^2\big].
\end{align}
By Assumption \ref{Ass:Monotone Assumption} and  using Young's inequality once again
\begin{align}
\label{eq:se1:Yi-Yi star}
   \bE \big[   |Y_n^{i,X,N} &-Y_n^{i,\star,N}|^2 \big] 
    \le 
    \bE \Big[  \Big\langle   Y_n^{i,X,N}-Y_n^{i,\star,N},
    X_{n}^{i,N}-\hx_{n}^{i,N}+V_n^{Y,i}- V_n^{*,i}
    \Big\rangle \Big]h
   \\ 
   &
   \le 
    \bE \Big[  
    \frac{1}{2} |Y_n^{i,X,N}-Y_n^{i,\star,N}|^2 +\frac{1}{2} |X_{n}^{i,N}-\hx_{n}^{i,N}|^2 \Big]
    \label{eq:se1: y-y v-v}
    +\bE \Big[  
    \Big\langle   Y_n^{i,X,N}-Y_n^{i,\star,N}, 
    V_n^{Y,i}- V_n^{*,i}
    \Big\rangle \Big] h. 
\end{align}
For the last term \eqref{eq:se1: y-y v-v}, since the particles are identically distributed, Assumption \ref{Ass:Monotone Assumption} and Remark \ref{remark:OSL for the whole function / system V} yield 
\begin{align}
\nonumber
\label{eq:se1:y-y v-v}
    \bE \Big[   
    \Big\langle   Y_n^{i,X,N}-Y_n^{i,\star,N}, 
    V_n^{Y,i}- V_n^{*,i}
    \Big\rangle \Big] 
     &
    \le
        \bE \Big[  \frac{1}{N} \sum_{j=1}^N \Big\langle   Y_n^{j,X,N}-Y_n^{j,\star,N}, 
    V_n^{Y,j}- V_n^{*,j}
    \Big\rangle \Big]
    \\  
     &\le \Big(2L_f^++L_u+\frac{1}{2}+\frac{L_{\tilde{u} }}{2}\Big) \bE \big[  | Y_n^{i,X,N}-Y_n^{i,\star,N}|^2 \big]. 
\end{align}
Thus, injecting \eqref{eq:se1:y-y v-v} back into  \eqref{eq:se1: y-y v-v} and \eqref{eq:se1:Yi-Yi star}, set $\Gamma_2=4L_f^++2L_u+L_{\tilde{u} }+1$, then by Remark \ref{remark: choice of h},  
\begin{align}
\label{eq:se1:y-y}
    &\bE \big[   |Y_n^{i,X,N} -Y_n^{i,\star,N}|^2 \big]
   \le \frac{1}{1-\Gamma_2 h}
    \bE \big[  |X_{n}^{i,N}-\hx_{n}^{i,N}|^2 \big]
    \le \bE \big[  |X_{n}^{i,N}-\hx_{n}^{i,N}|^2 \big] (1+Ch).
\end{align}
Plug \eqref{eq:se1:y-y} and  \eqref{eq:se1:y-y v-v}  back into \eqref{eq:se1: y-y v-v source 0}, \eqref{eq:se1:by-by} and \eqref{eq:I1:term1}. We then conclude that  
\begin{align}
\label{eq:se1:I1 final result}
    I_1\le \bE \big[  |X_{n}^{i,N}-\hx_{n}^{i,N}|^2 \big] (1+Ch).
\end{align}
For $I_2$, by Young's and Jensen's inequality, we have
\begin{align}
\label{eq:se1:I2 term 1}
    I_2\le  h~\bE \Big[  & \int_{t_n}^{t_{n}+h}  \Big|  v(X_{s}^{i,N},\mu_{s}^{X,N})-v(Y_n^{i,X,N},\mu^{Y,X,N}_{n})   
     \Big|^2 \dd s  
    \\
    \label{eq:se1:I2 term 2}
    &+ 
    \int_{t_n}^{t_{n}+h}  \Big|   b(s,X_{s}^{i,N},\mu_{s}^{X,N})-b(t_n,Y_n^{i,X,N},\mu^{Y,X,N}_{n})   
      \Big|^2 \dd s     
     \Big].
\end{align} 
For \eqref{eq:se1:I2 term 1}, from Assumption \ref{Ass:Monotone Assumption},  using Young's, Jensen's, and Cauchy-Schwarz inequality
\begin{align}
    \nonumber
     &\bE \Big[        \Big| 
    v( X_{s}^{i,N},\mu_s^{X,N})
    - v(  Y_n^{i,X,N},\mu_n^{Y,X,N}) \Big|^2\Big]
    \\  \label{eq:se1:f-f things}
    &\le C
     \bE \Big[\Big|
    u( X_{s}^{i,N},\mu_s^{X,N})
    - u(  Y_n^{i,X,N},\mu_n^{Y,X,N})
     \Big|^2
     +
     \frac{1}{N} \sum_{j=1}^N 
     \Big|
    f(X_{s}^{i,N}-X_{s}^{j,N} )-
    f(Y_{n}^{i,X,N}-Y_{n}^{j,X,N} )
     \Big|^2
     \Big]
     \\ \nonumber
     &\le \frac{C}{N} \sum_{j=1}^N
       \bE \Big[  \Big| \Big( 
       1+|X_{s}^{i,N}-X_{s}^{j,N}|^{q}    +|Y_{n}^{i,X,N}-Y_{n}^{j,X,N}|^{q}         \Big)
        |X_{s}^{i,N}-Y_{n}^{i,X,N}-(X_{s}^{j,N}-Y_{n}^{j,X,N})|   \Big|^2  \Big]
     \\ \nonumber
     &~ 
     +
      C \bE \Big[   (1+| X_{s}^{i,N}|^{2q} +| Y_n^{i,X,N}|^{2q})(|X_{s}^{i,N}-Y_n^{i,X,N}|^{2})+   \frac{1}{N} \sum_{j=1}^N | X_{s}^{j,N}- Y_n^{j,X,N} |^2   \Big]
    \\ 
      \label{eq:se1: v-v inequality 1}
     &\le C 
     \sqrt{ \bE \Big[ 1+| X_{s}^{i,N}|^{4q} +| Y_n^{i,X,N}|^{4q}  \Big]  \bE \Big[ |X_{s}^{i,N}-Y_n^{i,X,N}|^{4}  \Big] }
     +
     \bE \Big[   \frac{1}{N} \sum_{j=1}^N | X_{s}^{j,N}- Y_n^{j,X,N} |^2   \Big]
     \\ 
     \label{eq:se1: v-v inequality 2}
     &~  + \frac{C}{N} \sum_{j=1}^N
       \sqrt{
       \bE \Big[   1+|X_{s}^{i,N}-X_{s}^{j,N}|^{4q}    +|Y_{n}^{i,X,N}-Y_{n}^{j,X,N}|^{4q}         \Big]
       \bE \Big[ |X_{s}^{i,N}-Y_{n}^{i,X,N}|^{4}    +|X_{s}^{j,N}-Y_{n}^{j,X,N}|^{4}     \Big]
      }.
\end{align}
Using the structure of the SSM, Young's and Jensen's inequality,  and Proposition \ref{prop:yi-yj leq xi-xj} we have 
\begin{align}
    \label{eq:se1:Xs-Yn 2 estimate}
     |X_{s}^{i,N}-Y_n^{i,X,N} |^2 
     &
     \le
     2  |X_{s}^{i,N}-X_{n}^{i,N} |^2+
     2 |X_{n}^{i,N}-Y_n^{i,X,N} |^2,
     \\
     \nonumber
    |X_{n}^{i,N}-Y_n^{i,X,N} |^2
     & =\Big|v(  Y_n^{i,X,N},\mu_n^{Y,X,N}) h \Big|^2
     \le
     2\Big|u(  Y_n^{i,X,N},\mu_n^{Y,X,N}) h \Big|^2
     +
     \frac{2h^2}{N} \sum_{j=1}^N \Big|f(  Y_n^{i,X,N}- Y_n^{j,X,N})\Big|^2
     \\\nonumber 
     &\le 
     C \Big( 1+  |Y_n^{i,X,N}|^{2q+2} + \frac{1 }{N} \sum_{j=1}^N |Y_n^{j,X,N}|^{2}
     \Big) h^2
     +
     \frac{C h^2   }{N} \sum_{j=1}^N 
     \Big( 1+  |Y_n^{i,X,N}- Y_n^{j,X,N}|^{2q+2}\Big)
     \\\nonumber
     &\le
     C \Big( 1+  |Y_n^{i,X,N}|^{2q+2}
      + \frac{1 }{N} \sum_{j=1}^N |Y_n^{j,X,N}|^{2}
     \Big) h^2
     +
     \frac{C h^2  }{N} \sum_{j=1}^N 
     \Big( 1+  |X_n^{i,N}-X_n^{j,N}|^{2q+2}\Big).
\end{align}
Similarly, we have 
\begin{align}
    \label{eq:se1:Xs-Yn 4 estimate}
     |X_{s}^{i,N}-Y_n^{i,X,N} |^4 
     &\le
     16  |X_{s}^{i,N}-X_{n}^{i,N} |^4+
     16 |X_{n}^{i,N}-Y_n^{i,X,N} |^4,
     \\
     \nonumber
     |X_{n}^{i,N}-Y_n^{i,X,N} |^4
     &
     \le
      C \Big( 1+  |Y_n^{i,X,N}|^{4q+4}  + \frac{1 }{N} \sum_{j=1}^N |Y_n^{j,X,N}|^{4}\Big) h^4
      +
     \frac{C h^4  }{N} \sum_{j=1}^N 
     \Big( 1+  |X_n^{i,N}-X_n^{j,N}|^{4q+4}\Big).
\end{align}
From \eqref{Eq:MV-SDE Propagation} and using \eqref{eq:momentboundParticiInteractingSystem} (since $m\ge 4q+4$) alongside Young's inequality and It\^o's isometry, we have  
\begin{align*}
    \bE \big[   & |X_{s}^{i,N}-X_n^{i,N} |^2     \big] 
    \le \bE \Big[ \Big|
    \int_{t_n}^{s} v( X_{u}^{i,N},\mu_u^{X,N})
    +b(u,X_{u}^{i,N},\mu_u^{X,N}) \dd u
    +  
    \int_{t_n}^{s} \sigma(u,X_{u}^{i,N},\mu_u^{X,N}) \dd W_u^i  \Big|^2 \Big]~\le C h,
    \\
    \bE \big[    &|X_{s}^{i,N}-X_n^{i,N} |^4     \big] 
    \le
     \bE \Big[ \Big|
    \int_{t_n}^{s} v( X_{u}^{i,N},\mu_u^{X,N})
    +b(u,X_{u}^{i,N},\mu_u^{X,N}) \dd u
    +  \int_{t_n}^{s} \sigma(u,X_{u}^{i,N},\mu_u^{X,N}) \dd W_u^i  \Big|^4 \Big]~\le C h^2.
\end{align*}
Also, using \eqref{eq:momentboundParticiInteractingSystem}, Jensen's and Young's inequality (since $m\ge 4q+4$) we have  
\begin{align*}
    \bE \Big[     \frac{C h^2  }{N} \sum_{j=1}^N 
     \Big( 1+  |X_t^{i,N}-X_t^{j,N}|^{2q+2}\Big)    \Big]
     \le&
     Ch^2
     \quad\textrm{and}\quad  
     \bE \Big[   \Big|  \frac{C h^2  }{N} \sum_{j=1}^N 
     \Big( 1+  |X_t^{i,N}-X_t^{j,N}|^{2q+2}  \Big) \Big|^2   \Big]
     \le
     Ch^4.
\end{align*}
This next argument uses steps similar to those used in \eqref{eq:mmb:def of HXp} and \eqref{eq:mmb:def of HYp} (appearing in the proof of Theorem \ref{theorem: discrete moment bound}). Since $X^{\cdot,N}$ has bounded moments via \eqref{eq:momentboundParticiInteractingSystem} (this refers to the true interacting particle system), we have for any $m\ge p\ge 2$ that 
\begin{align*}
    \bE\big[|Y_n^{i,X,N}|^{p}\big] 
    &\le\Big( 
     4^{p} \bE \Big[       \frac{1}{N} \sum_{j=1}^N |X_{n}^{i,N}- X_{n}^{j,N} |^{p} \Big]
    +
    4^p \bE \Big[     \Big|\frac{1}{N} \sum_{j=1}^N ( 1+  |X_{n}^{j,N}|^2) \Big|^{p/2}     \Big]+1\Big)(1+Ch)
    \le C.
\end{align*}
Collecting all the terms above, using that the particles are identically distributed, we have 
\begin{align}
    \label{eq:se1: expec diff order 1}
     \bE \big[   |X_{s}^{i,N}-Y_n^{i,X,N} |^2     \big]
     &\le Ch,
     \qquad
     \bE \big[   |X_{s}^{i,N}-Y_n^{i,X,N} |^4     \big]
     \le Ch^2,
     \qquad
      \bE\big[|Y_n^{i,X,N}|^{p}\big]
    \le C ,
     \\
     \label{eq:se1: expec diff order 2}
     \bE \Big[   \Big| W^{(2)}(\mu_s^{X,N},\mu_n^{Y,X,N} )  \Big|^2     \Big]
     &\le \bE \Big[ \frac{1}{N} \sum_{j=1}^N |X_{s}^{j,N}- Y_n^{j,X,N} |^2     \Big] \le Ch.
\end{align}
Plugging all the above inequalities back into \eqref{eq:se1: v-v inequality 1} and \eqref{eq:se1: v-v inequality 2}, we conclude that 
\begin{align}
\label{eq:se1:I2 result for term v}
     \bE \Big[        \Big| 
    v( X_{s}^{i,N},\mu_s^{X,N})
    - v(  Y_n^{i,X,N},\mu_n^{Y,X,N}) \Big|^2\Big]
    \le Ch.
\end{align}
We now consider term \eqref{eq:se1:I2 term 2} of $I_2$. By Assumption \ref{Ass:Monotone Assumption}, using \eqref{eq:se1: expec diff order 1} and \eqref{eq:se1: expec diff order 2} 
% \hl{fix here $t$ to $n$?}
\begin{align}
    \bE \Big[ \Big|& b(s,X_{s}^{i,N},\mu_{s}^{X,N})-b(t_n,Y_n^{i,X,N},\mu^{Y,X,N}_{n})\Big|^2    \Big]  
    % \\&
    \label{eq:se1:I2 result for term b}
   \le
   C\bE\Big[        h+   |X_{s}^{i,N}-Y_n^{i,X,N} |^2
    +
     \Big| W^{(2)}(\mu_s^{X,N},\mu_{n}^{Y,X,N} )  \Big|^2 \Big] \le Ch.
\end{align}
Thus, plugging \eqref{eq:se1:I2 result for term v}, \eqref{eq:se1:I2 result for term b} back into \eqref{eq:se1:I2 term 1} and \eqref{eq:se1:I2 term 2}, we have \begin{align}
\label{eq:se1:I2 final result}
    I_2\le Ch^3.
\end{align}
For $I_3$,  by It\^o's isometry, the results in \eqref{eq:se1: expec diff order 1} and \eqref{eq:se1: expec diff order 2}, and using similar argument as in \eqref{eq:se1:I2 result for term b} we have 
\begin{align}
I_3=&  \bE \Big[  \Big| 
     \int_{t_n}^{t_{n}+r}  \Big( \sigma(s,X_{s}^{i,N},\mu_{s}^{X,N})
    -\sigma(t_n,Y_n^{i,X,N},\mu^{Y,X,N}_{n})   
    \Big) \dd W^i_s
     \Big|^2\Big]    
     \nonumber
     \\
     \label{eq:se1:I3 final result}
     \le& \bE \Big[  
     \int_{t_n}^{t_{n}+h}  \Big| \Big( \sigma(s,X_{s}^{i,N},\mu_{s}^{X,N})
    -\sigma(t_n,Y_n^{i,X,N},\mu^{Y,X,N}_{n})   
    \Big)\Big|^2\ \dd s
     \Big]  \le Ch^2.
\end{align}
Similarly for $I_4$, by It\^o's isometry,     Proposition \ref{prop: discrete second moment bound},   Equation \eqref{eq:se1:y-y} and using similar argument in \eqref{eq:se1:by-by} 
\begin{align}
I_4=&  \bE \Big[  \Big| 
     \int_{t_n}^{t_{n}+r}  \Big(  \sigma(t_n,Y_n^{i,X,N},\mu^{Y,X,N}_{n})   
        - \sigma(t_n,Y_{n}^{i,\star,N},\hm^{Y,N}_n)
    \Big)  \dd W^i_s
     \Big|^2\Big]    
     \nonumber
     \\
     \label{eq:se1:I4 final result}
     \le& \bE \Big[  
     \int_{t_n}^{t_{n}+h}  \Big| \Big( \sigma(t_n,Y_n^{i,X,N},\mu^{Y,X,N}_{n})   
    - \sigma(t_n,Y_{n}^{i,\star,N},\hm^{Y,N}_n)   
    \Big)\Big|^2\ \dd s
     \Big]  \le  \bE \big[  |X_{n}^{i,N}-\hx_{n}^{i,N}|^2 \big] Ch.
\end{align}
Plugging \eqref{eq:se1:I1 final result}, \eqref{eq:se1:I2 final result} \eqref{eq:se1:I3 final result}  and \eqref{eq:se1:I4 final result} back to \eqref{eq:se1:ultimate eqaution IIII}, we have, for all $n\in \llbracket 0,M-1 \rrbracket$,  $i\in \llbracket 1,N \rrbracket$ and $r\in[0,h]$  
\begin{align*} 
  \bE \big[  |\Delta X^i_{t_n+r}|^2 \big] 
    &\le 
    (1+h)  \bE \big[  |X_{n}^{i,N}-\hx_{n}^{i,N}|^2 \big] (1+Ch)+(1+\frac{1}{h})Ch^3+Ch^2+  \bE \big[  |X_{n}^{i,N}-\hx_{n}^{i,N}|^2 \big] Ch
    \\
    &\le
    \bE \big[  |X_{n}^{i,N}-\hx_{n}^{i,N}|^2 \big] (1+Ch)+Ch^2.
\end{align*}
By backward induction, the discrete Gr\"onwall's lemma delivers the result of \eqref{eq:convergence theroem term 1}.  
\end{proof}

\subsection[Proof of the moment bound result]{Proof of Theorem \ref{theorem：moment bound for the big theorm time extensions }: the moment bound result}
\label{subsection: Moment bound of the SSM}

In this section prove Theorem \ref{theorem：moment bound for the big theorm time extensions }. 
Throughout this section we follow the notation introduced in Theorem \ref{theorem：moment bound for the big theorm time extensions } and let: Assumption \ref{Ass:Monotone Assumption} hold, $h$ is chosen as in \eqref{eq:h choice} and  $m\ge 2p$ with $m$ as defined in \eqref{Eq:General MVSDE}.   

We first prove a moment bounds result across the timegrid then extend it to the continues process as stated in Theorem \ref{theorem：moment bound for the big theorm time extensions }.
\begin{theorem}[Moment bounds of SSM] 
\label{theorem: discrete moment bound}
Let the setting of Theorem \ref{theorem:SSM: strong error 1} hold. Let $m\geq 2$ where $\hx^{i,N}_0\in L^m_0(\bR^d)$ for all $i\in\llbracket 1,N\rrbracket$ and let $\hx^{i,N}$ be the continuous-time extension of the SSM given by \eqref{eq: scheme continous extension in SDE form}. Let $2p\in [2,m]$,   then there exists a constant $C>0$ independent of $h,N,M$ (but depending on $T$ and $m$) such that     
\begin{align*} 
    \sup_{i\in \llbracket 1,N \rrbracket } \sup_{n\in \llbracket 0,M \rrbracket }
    \bE \big[ |\hx_{n}^{i,N}|^{2p}  \big]
    +
     \sup_{i\in \llbracket 1,N \rrbracket } \sup_{n\in \llbracket 0,M-1 \rrbracket }
    \bE \big[ |Y_{n}^{i,\star,N}|^{2p}  \big]
    &\le C \big(  1+ \sup_{i\in \llbracket 1,N \rrbracket } \bE\big[\, |\hx_{0}^{i,N}|^{2p}\big] \big) <\infty. 
\end{align*}
\end{theorem}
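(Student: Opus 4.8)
The plan is a discrete Grönwall argument carried on the two exchangeable moment functionals $\mathcal H_n^{X,p}:=\tfrac1N\sum_{i=1}^N\bE[\,|\hx_n^{i,N}|^{2p}\,]$ and $\mathcal H_n^{Y,p}:=\tfrac1N\sum_{i=1}^N\bE[\,|Y_n^{i,\star,N}|^{2p}\,]$; since the particles are identically distributed these equal $\sup_i\bE[\,|\hx_n^{i,N}|^{2p}\,]$ and $\sup_i\bE[\,|Y_n^{i,\star,N}|^{2p}\,]$, so it suffices to bound them uniformly in $n,N,h$. That each of them is \emph{finite} for fixed $n,N,h$ is immediate: the implicit map $\hx_n^N\mapsto Y_n^{\star,N}=(I-hV)^{-1}(\hx_n^N)$ of \eqref{eq:SSTM:scheme 0} is globally Lipschitz in its argument (from the one-sided Lipschitz property of $V$, Remark \ref{remark:OSL for the whole function / system V} and Lemma \ref{lemma:SSTM:new functions def and properties1}) and $b,\sigma$ grow at most linearly, so finiteness propagates step by step from $\hx_0\in L^m_0$. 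One step of the scheme is the composition of the implicit half-step \eqref{eq:SSTM:scheme 1} and the Euler half-step \eqref{eq:SSTM:scheme 2}, and I bound the $2p$-th moment transfer across each.

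\textbf{Euler half-step.} Conditioning on $\cF_{t_n}$, using that $\Delta W_n^i$ is independent of $\cF_{t_n}$ and centred with variance $hI$, together with the growth bounds $\langle y,\psi(t_n,y,\mu)\rangle\le C(1+|y|^2+W^{(2)}(\mu,\delta_0)^2)$ and $|\psi(t_n,y,\mu)|^2\le C(1+|y|^2+W^{(2)}(\mu,\delta_0)^2)$ for $\psi\in\{b,\sigma\}$ from Remark \ref{remark:ImpliedProperties}, a standard expansion of $\bE\big[\,|Y_n^{i,\star,N}+b(\cdot)h+\sigma(\cdot)\Delta W_n^i|^{2p}\mid\cF_{t_n}\,\big]$ gives $\mathcal H_{n+1}^{X,p}\le(1+Ch)\,\mathcal H_n^{Y,p}+Ch$. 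No convolution difficulty enters here: whenever $W^{(2)}(\hm_n^{Y,N},\delta_0)^{2p}=\big(\tfrac1N\sum_j|Y_n^{j,\star,N}|^2\big)^p$ appears, Jensen bounds it by $\tfrac1N\sum_j|Y_n^{j,\star,N}|^{2p}$, so everything stays at order $2p$.

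\textbf{Implicit half-step.} Abbreviating $Y^i=Y_n^{i,\star,N}$, $\hx^i=\hx_n^{i,N}$ and writing $\hm$ for $\hm_n^{Y,N}$, convexity of $x\mapsto|x|^{2p}$ applied to $\hx^i=Y^i-h\,v(Y^i,\hm)$ gives the pointwise bound $|Y^i|^{2p}\le|\hx^i|^{2p}+2ph\,|Y^i|^{2p-2}\langle Y^i,v(Y^i,\hm)\rangle$. Averaging over $i$ and splitting $v(Y^i,\hm)=u(Y^i,\hm)+\tfrac1N\sum_j f(Y^i-Y^j)$: the $u$-part is controlled by the one-sided-in-space and Lipschitz-in-measure bounds of Remark \ref{remark:ImpliedProperties} and contributes only terms of order $\le 2p$ each carrying a factor $h$, hence absorbable into the left-hand side for $h$ as in \eqref{eq:h choice} (Remark \ref{remark: choice of h}), the quantity $\tfrac1N\sum_j|Y^j|^2$ appearing there being traded for $\tfrac1N\sum_j|\hx^j|^2$ via Proposition \ref{prop:sum y square leq sum x square}. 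For the convolution part, oddness of $f$ symmetrises the double sum to $\tfrac1{2N^2}\sum_{i,j}\big\langle\,|Y^i|^{2p-2}Y^i-|Y^j|^{2p-2}Y^j,\ f(Y^i-Y^j)\,\big\rangle$; writing $|Y^i|^{2p-2}Y^i-|Y^j|^{2p-2}Y^j=\int_0^1\nabla^2\big(\tfrac1{2p}|\cdot|^{2p}\big)\big(Y^j+t(Y^i-Y^j)\big)(Y^i-Y^j)\,\dd t$ and invoking the one-sided Lipschitz bound $\langle x-x',f(x)-f(x')\rangle\le L_f|x-x'|^2$ together with the local Lipschitz bound $|f(x)-f(x')|\le L_{\hat f}(1+|x|^{q}+|x'|^{q})|x-x'|$, this is bounded by $\tfrac{C}{N^2}\sum_{i,j}\big(|Y^i|^{2p-2}+|Y^j|^{2p-2}\big)\big(1+|Y^i-Y^j|^{q}\big)|Y^i-Y^j|^2$. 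Proposition \ref{prop:yi-yj leq xi-xj} then replaces $|Y^i-Y^j|^2$ by $(1+Ch)|\hx^i-\hx^j|^2$ (and the $q$-power similarly), after which Young's inequality and Proposition \ref{prop:auxilary for multipy of r.v.} (for products of identically distributed variables) reorganise the outcome.

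\textbf{Main obstacle.} The genuine difficulty is that the superlinearity of $f$ forces, through the $|Y^i|^{2p-2}$ factor above, terms of the type $h\,|Y^i|^{2p-2}|\hx^i-\hx^j|^{2+q}$ into the $2p$-th moment estimate; Young's inequality converts them into moments of $\hx$ of order strictly above $2p$, and these can be neither absorbed on the left nor traded back to order $2p$ by Propositions \ref{prop:yi-yj leq xi-xj}--\ref{prop:sum y square leq sum x square} (which preserve order $2p$ only when the growth exponent $q$ is absent). The resolution rests on the factor $h$ that every such term carries: combined with the a priori finiteness noted above and fed into the discrete Grönwall lemma alongside the already-established lower-order bounds — the base case $p=1$ being exactly Proposition \ref{prop:sum y square leq sum x square} plus the Euler half-step — the excess moments are carried through at the cost of a vanishing factor. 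Making this precise, in particular keeping every exponent used within the available window $[2,m]$, is where the structure \eqref{eq:SSTM:scheme 1} of the implicit step and all of Propositions \ref{prop:yi-yj leq xi-xj}, \ref{prop:sum y square leq sum x square}, \ref{prop:auxilary for multipy of r.v.} are used in concert; this is the heart of the argument. Iterating the two half-steps and the discrete Grönwall lemma over $n\in\llbracket0,M\rrbracket$ then delivers the displayed bound, and Theorem \ref{theorem：moment bound for the big theorm time extensions } follows by applying the Euler half-step estimate on each interval $[t_n,t]$, $t\in[t_n,t_{n+1}]$, to the continuous extension \eqref{eq: scheme continous extension in SDE form}.
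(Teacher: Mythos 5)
Your Euler half-step is fine, but the implicit half-step does not close, and the gap sits exactly where you place ``the heart of the argument.'' After symmetrising the convolution term you arrive at contributions of order $h\,\bE\big[(|Y^i|^{2p-2}+|Y^j|^{2p-2})(1+|Y^i-Y^j|^q)|Y^i-Y^j|^2\big]$, i.e.\ moments of order $2p+q$. Your proposed resolution --- that the prefactor $h$ together with a priori finiteness lets these excess moments be ``carried through at the cost of a vanishing factor'' --- fails for two reasons. First, summing $Ch\,\sup_n\bE[|\cdot|^{2p+q}]$ over the $M=T/h$ steps of the discrete Gr\"onwall iteration yields $CT\,\sup_n\bE[|\cdot|^{2p+q}]$, which does not vanish; closing the estimate would require a bound on the $(2p+q)$-th moments uniform in $h,N,M$, which is exactly the statement being proved at a higher order, so the recursion never terminates. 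Second, for $2p$ near $m$ the order $2p+q$ exceeds $m$, and since $X_0$ is only assumed in $L^m_0$ these moments need not be finite at all, so the a priori finiteness you invoke is unavailable. Propositions \ref{prop:yi-yj leq xi-xj} and \ref{prop:sum y square leq sum x square} cannot rescue the term either: the factor $|Y^i|^{2p-2}$ is not a pairwise difference, and the summation relationship controls $\sum_i|Y^i|^2$ only at power one, so neither proposition gives a pathwise bound on $|Y^i|^{2p-2}$ by quantities in $\hx$.

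The paper's proof avoids this entirely by never expanding $|Y^i|^{2p}$ directly. It writes $\hx_n^{i,N}=\frac1N\sum_j(\hx_n^{i,N}-\hx_n^{j,N})+\frac1N\sum_j\hx_n^{j,N}$ and tracks the two functionals $\bE\big[\frac1N\sum_j|\hx_n^{i,N}-\hx_n^{j,N}|^{2p}\big]$ and $\bE\big[\big|\frac1N\sum_j(1+|\hx_n^{j,N}|^2)\big|^p\big]$, i.e.\ the quantities $H^{X,p}_n$, $H^{Y,p}_n$ of \eqref{eq:mmb:def of HXp}--\eqref{eq:mmb:def of HYp}. Both are contracted through the implicit step simply by raising the \emph{pathwise} inequalities of Propositions \ref{prop:yi-yj leq xi-xj} and \ref{prop:sum y square leq sum x square} to the power $p$, so the superlinear growth of $f$ never enters the implicit half-step; the Euler half-step is then handled essentially as you do, via Proposition \ref{prop:auxilary for multipy of r.v.} and independence of the Brownian increments. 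As written, your argument has a genuine gap at the convolution term, and a direct expansion of $|Y^i|^{2p}$ would need a new idea to repair it.
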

\begin{proof}
The next inequality introduces the quantities $H^{X,p}_{n}$ and $H^{Y,p}_{n}$. For any $ i\in \llbracket 1,N \rrbracket ,\  n\in \llbracket 0,M \rrbracket$, by Young's and Jensen's inequality 
\begin{align}
\nonumber 
     \bE\big[ |\hx_{n}^{i,N} |^{2p}  \big] 
     &=
     \bE \Big[   \Big|    \frac{1}{N} \sum_{j=1}^N  (   \hx_{n}^{i,N}- \hx_{n}^{j,N} )  
    +\frac{1}{N} \sum_{j=1}^N \hx_{n}^{j,N} 
    \Big|^{2p} 
    \Big]
    \\
    \label{eq:mmb:def of HXp} 
    &\le 4^p \bE \Big[       \frac{1}{N} \sum_{j=1}^N |\hx_{n}^{i,N}- \hx_{n}^{j,N} |^{2p} \Big]
    +
    4^p \bE \Big[     \Big|\frac{1}{N} \sum_{j=1}^N ( 1+  |\hx_{n}^{j,N}|^2) \Big|^{p}     \Big]+1=H^{X,p}_{n},
    \\ 
     \bE\big[ |Y_{n}^{i,\star,N} |^{2p}  \big]   
    \label{eq:mmb:def of HYp}
    &\le 4^p \bE \Big[       \frac{1}{N} \sum_{j=1}^N |Y_{n}^{i,\star,N}- Y_{n}^{j,\star,N} |^{2p} \Big]
    +
    4^p \bE \Big[     \Big|\frac{1}{N} \sum_{j=1}^N  (   1+ |Y_{n}^{j,\star,N}|^{2} )   \Big|^{p}  \Big]+1=H^{Y,p}_{n}. 
\end{align}
Using the following inequalities from Proposition \ref{prop:yi-yj leq xi-xj} and \ref{prop:sum y square leq sum x square}, we have $H^{Y,p}_{n} \le  H^{X,p}_{n}(1+Ch)$, 
\begin{align*}
|Y_{n}^{i,\star,N}-Y_{n}^{j,\star,N}|^2 
    &\le 
     | \hx_{n}^{i,N}-\hx_{n}^{j,N}|^2 (1+Ch)
     \quad \textrm{and}\quad  
  \frac{1}{N} \sum_{j=1}^N    (1+|Y_{n}^{j,\star,N}|^2) 
    \le \Big[        
      \frac{1}{N} \sum_{j=1}^N
    (1+| \hx_{n}^{j,N}|^2)\Big]  (1+Ch) .
\end{align*} 
We now prove that $H^{X,p}_{n+1} \le H^{Y,p}_{n}(1+Ch) $. 
For the first element composing $H^{X,p}_{n+1} $ we have 
\begin{align}
     \bE\Big[       \frac{1}{N} \sum_{j=1}^N |\hx_{n+1}^{i,N}- \hx_{n+1}^{j,N} |^{2p} \Big]
     = &
     \bE \Big[       \frac{1}{N} \sum_{j=1}^N  \Big|    
    \Big( Y_{n}^{i,\star,N}
            + b(t_n,Y_{n}^{i,\star,N},\hm^{Y,N}_n) h
            +\sigma(t_n,Y_{n}^{i,\star,N},\hm^{Y,N}_n) \Delta W_{n}^i  \Big)
    \nonumber
    \\
    \label{eq: proof of p-moment bound, aux 2}
    &\quad 
    -\Big( Y_{n}^{j,\star,N}
            + b(t_n,Y_{n}^{j,\star,N},\hm^{Y,N}_n) h
            +\sigma(t_n,Y_{n}^{j,\star,N},\hm^{Y,N}_n) \Delta W_{n}^j  \Big)\Big|^{2p}  \Big].
\end{align}
Introduce the extra (local) notation for $G^{i,j,n}_1$, $G^{i,j,n}_2$ and $G^{i,j,n}_3$ as 
\begin{align*}
    G^{i,j,n}_1&=
    Y_{n}^{i,\star,N}-Y_{n}^{j,\star,N},
    \quad
    G^{i,j,n}_2=  b(t_n,Y_{n}^{i,\star,N},\hm^{Y,N}_n) h-b(t_n,Y_{n}^{j,\star,N},\hm^{Y,N}_n) h, 
    \\
    G^{i,j,n}_3&= \sigma(t_n,Y_{n}^{i,\star,N},\hm^{Y,N}_n) \Delta W_{n}^i-\sigma(t_n,Y_{n}^{j,\star,N},\hm^{Y,N}_n) \Delta W_{n}^j .
\end{align*}
For $a+b+c=2p$, $a< 2p$, $a,b,c \in \mathbb{N}$, by Assumption \ref{Ass:Monotone Assumption},  Young's inequality, Jensen's inequality, Proposition \ref{prop:auxilary for multipy of r.v.} and the fact that the Brownian increments are independent, the particles are conditionally independent and identically distributed, for \eqref{eq: proof of p-moment bound, aux 2}, we have 
\begin{align*}
    \bE \Big[ \frac{C}{N} \sum_{j=1}^N  |G^{i,j,n}_1|^a |G^{i,j,n}_2|^b |G^{i,j,n}_3|^c        \Big]
    \le
        \bE \big[ | Y_{n}^{i,\star,N} |^{2p}          \big]  Ch 
        \le H^{Y,p}_{n} Ch.
\end{align*}
Thus, for the first term of $H^{X,p}_{n+1} $, we conclude that 
\begin{align}
     4^p\bE\Big[       \frac{1}{N} \sum_{j=1}^N |\hx_{n+1}^{i,N}- \hx_{n+1}^{j,N} |^{2p} \Big]
      \label{eq:moment bounde: first term estimate}
    &\le 
    4^p\bE\Big[       \frac{1}{N} \sum_{j=1}^N |Y_{n}^{i,\star,N}-Y_{n}^{j,\star,N} |^{2p} \Big]+  H^{Y,p}_{n} Ch. 
\end{align}
For the second term of $H^{X,p}_{n+1} $ we have 
\begin{align*}
    &\bE \Big[     \Big|\frac{1}{N} \sum_{j=1}^N (1+|\hx_{n+1}^{j,N}|^2)\Big|^{p}    \Big] 
    =
    \bE \bigg[   \Big|    \frac{1}{N} \sum_{j=1}^N     \Big[    1+   
    \Big( Y_{n}^{j,\star,N}
            + b(t_n,Y_{n}^{j,\star,N},\hm^{Y,N}_n) h
            +\sigma(t_n,Y_{n}^{j,\star,N},\hm^{Y,N}_n) \Delta W_{n}^j  \Big)^2 \Big] \Big|^{p} \bigg]. 
\end{align*}
Set the following (extra local) notation
\begin{align*}
    G_4^{n}&=
     \frac{1}{N} \sum_{j=1}^N     (1+|Y_{n}^{j,\star,N}|^2),
     \quad
     G_5^{n}= \frac{1}{N} \sum_{j=1}^N 
    \Big\langle 2 Y_{n}^{j,\star,N}+
    \sigma(t_n,Y_{n}^{j,\star,N},\hm^{Y,N}_n) \Delta W_{n}^j  ,
    \sigma(t_n,Y_{n}^{j,\star,N},\hm^{Y,N}_n) \Delta W_{n}^j\Big\rangle, 
    \\
    G_6^{n}&=  \frac{1}{N} \sum_{j=1}^N   
      \Big \langle 2Y_{n}^{j,\star,N}+
    b(t_n,Y_{n}^{j,\star,N},\hm^{Y,N}_n)h+2\sigma(t_n,Y_{n}^{j,\star,N},\hm^{Y,N}_n) \Delta W_{n}^j  ,b(t_n,Y_{n}^{j,\star,N},\hm^{Y,N}_n)h\Big\rangle. 
\end{align*} 
We have once again using similar arguments as before, by Young's inequality, Jensen's inequality,  Proposition \ref{prop:auxilary for multipy of r.v.}, that the particles are conditionally independent and identically distributed, the independence property of the Brownian increments, the Lipschitz property for $b$ and $\sigma$, and using the fact that for $l_1>l_2>1,~|x|^{l_2}\le 1+|x|^{l_1}$  we have 
\begin{align*}
    \bE \big[ |G_4^{n}|^a |G_5^{n}|^b |G_6^{n}|^c        \big]
    \le
        \bE \big[ | Y_{n}^{i,\star,N} |^{2p}+1          \big]  Ch
        \le H^{Y,p}_{n} Ch.
\end{align*}
Thus, for the second term of $~H^{X,p}_{n+1} $, we conclude that  
\begin{align}
\label{eq:moment bounde: second term estimate}
     4^p \bE \Big[     \Big|\frac{1}{N} \sum_{j=1}^N ( 1+  |\hx_{n+1}^{j,N}|^2) \Big|^{p}     \Big]
    &\le 
    4^p \bE \Big[     \Big|\frac{1}{N} \sum_{j=1}^N  (   1+ |Y_{n}^{j,\star,N}|^{2} )   \Big|^{p}  \Big]+  H^{Y,p}_{n} Ch .
\end{align}
Plug \eqref{eq:moment bounde: first term estimate} and \eqref{eq:moment bounde: second term estimate} into $H^{X,p}_{n+1} $ we have 
\begin{align*}
    H^{X,p}_{n+1}&
    =4^p \bE \Big[       \frac{1}{N} \sum_{j=1}^N |\hx_{n+1}^{i,N}- \hx_{n+1}^{j,N} |^{2p} \Big]
    +
   4^p \bE \Big[     \Big|\frac{1}{N} \sum_{j=1}^N ( 1+  |\hx_{n+1}^{j,N}|^2) \Big|^{p}     \Big]+1
    \\
    &\le
    4^p\bE\Big[       \frac{1}{N} \sum_{j=1}^N |Y_{n}^{i,\star,N}-Y_{n}^{j,\star,N} |^{2p} \Big]
    +
    4^p \bE \Big[     \Big|\frac{1}{N} \sum_{j=1}^N  (   1+ |Y_{n}^{j,\star,N}|^{2} )   \Big|^{p}  \Big]+1+  H^{Y,p}_{n} Ch  
    \le H^{Y,p}_{n}(1+Ch).
\end{align*}
Thus finally, for all $n\in \llbracket 0,M-1 \rrbracket,~i\in \llbracket 1,N \rrbracket $, by backward induction collecting all the results above, since $m\ge 2p$, where $m$ is defined in \eqref{Eq:General MVSDE}, we have (for some $C>0$ independent of $h,N,M$)
\begin{align*}
    \bE\big[ |\hx_{n+1}^{i,N} |^{2p}  \big] 
    &\le
    H^{X,p}_{n+1}
    \le H^{Y,p}_{n}(1+Ch) \le  H^{X,p}_{n}(1+Ch)^2 
    \le
    \cdots \le  H^{X,p}_{0} e^{CT} \le C \bE\big[ |\hx_{0}^{i,N} |^{2p}  \big] +C <\infty.
\end{align*}
Similar argument yields the result for $ \bE\big[ | Y_{n}^{i,\star,N} |^{2p}  \big]  $.
\end{proof}

\subsubsection*{Proof of the Theorem \ref{theorem：moment bound for the big theorm time extensions }} 
\begin{proof}[Proof of the Theorem \ref{theorem：moment bound for the big theorm time extensions }]
Under the same assumptions and notations of Theorem \ref{theorem: discrete moment bound}, one can apply arguments similar to those used in \cite[Proposition 4.6]{2021SSM} to obtain the result.
\end{proof}

The final result of this section concerns the incremental (in time) moment bounds of $\hx^{i,N}$. This result is in preparation for the next section.
\begin{proposition}
\label{prop:SSTM: local increment error00}
Under same assumptions and notations of Theorem \ref{theorem：moment bound for the big theorm time extensions }, there exists a constant $C>0$     independent of $h,N,M$ (but depending on $T$ and $m$)    such that for any $p\geq 2$ satisfy $m\ge (q+1)p$, where $m$ is defined in \eqref{Eq:General MVSDE}, $q$ is defined in Assumption \ref{Ass:Monotone Assumption}, we have 
\begin{align}
    \sup_{i\in \llbracket 1,N \rrbracket}
    \sup_{0\le t \le T} \bE\big[ |\hx_{t}^{i,N}-\hx_{\kt}^{i,N} |^p \big]
    \le C h^{\frac p2}.
\end{align}
\end{proposition}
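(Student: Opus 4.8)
The plan is to exploit the explicit one-step structure of the continuous extension \eqref{eq: scheme continous extension in SDE form}. Fix $i\in\llbracket 1,N\rrbracket$, $n\in\llbracket 0,M-1\rrbracket$ and $t\in[t_n,t_{n+1}]$, so that $\kt=t_n$ and
\begin{align*}
\hx_t^{i,N}-\hx_{t_n}^{i,N}
=\big(v(Y_n^{i,\star,N},\hm_n^{Y,N})+b(t_n,Y_n^{i,\star,N},\hm_n^{Y,N})\big)(t-t_n)
+\sigma(t_n,Y_n^{i,\star,N},\hm_n^{Y,N})\big(W_t^i-W_{t_n}^i\big).
\end{align*}
Applying $|\cdot|^p$ and Young's inequality splits the estimate into a drift part and a diffusion part. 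Since $t-t_n\le h$, it suffices to show that $\bE\big[|v(Y_n^{i,\star,N},\hm_n^{Y,N})+b(t_n,Y_n^{i,\star,N},\hm_n^{Y,N})|^p\big]$ is bounded by a constant (which yields a contribution of order $h^p\le h^{p/2}$ because $h<1$) and that $\bE\big[|\sigma(t_n,Y_n^{i,\star,N},\hm_n^{Y,N})(W_t^i-W_{t_n}^i)|^p\big]\le Ch^{p/2}$.

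For the drift part, the growth bounds in Assumption \ref{Ass:Monotone Assumption} give $|f(x)|\le C(1+|x|^{q_2+1})$ (use the local Lipschitz estimate with $x'=\mathbf 0$ and $f(\mathbf 0)=\mathbf 0$) and $|u(x,\mu)|\le C(1+|x|^{q_1+1}+W^{(2)}(\mu,\delta_0))$, hence
\begin{align*}
|v(x,\mu)|\le C\Big(1+|x|^{q+1}+\int_{\bR^d}|y|^{q_2+1}\,\mu(\dd y)\Big),
\end{align*}
while $b$ has linear growth by Remark \ref{remark:ImpliedProperties}. Evaluating at $(Y_n^{i,\star,N},\hm_n^{Y,N})$, noting that $\int|y|^{q_2+1}\hm_n^{Y,N}(\dd y)=\frac1N\sum_j|Y_n^{j,\star,N}|^{q_2+1}$ and $W^{(2)}(\hm_n^{Y,N},\delta_0)^2=\frac1N\sum_j|Y_n^{j,\star,N}|^2$, taking $p$-th moments, applying Jensen's inequality to the empirical averages and using that the particles are identically distributed, everything is dominated by $C\big(1+\bE[|Y_n^{1,\star,N}|^{(q+1)p}]\big)$. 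By Theorem \ref{theorem: discrete moment bound} — which applies precisely because the hypothesis $m\ge(q+1)p$ holds — this is bounded by a constant independent of $h,N,M$.

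For the diffusion part, condition on $\cF_{t_n}$: the matrix $\sigma(t_n,Y_n^{i,\star,N},\hm_n^{Y,N})$ is $\cF_{t_n}$-measurable and $W_t^i-W_{t_n}^i$ is independent of $\cF_{t_n}$ and centred Gaussian with covariance $(t-t_n)I_l$. Using $|\sigma\,\Delta W|\le|\sigma|\,|\Delta W|$ and the moments of a Gaussian increment,
\begin{align*}
\bE\big[|\sigma(t_n,Y_n^{i,\star,N},\hm_n^{Y,N})(W_t^i-W_{t_n}^i)|^p\big]
\le C_p\,(t-t_n)^{p/2}\,\bE\big[|\sigma(t_n,Y_n^{i,\star,N},\hm_n^{Y,N})|^p\big]\le Ch^{p/2},
\end{align*}
where the final bound uses the linear growth of $\sigma$ (Remark \ref{remark:ImpliedProperties}) together with Theorem \ref{theorem: discrete moment bound} (only a moment of order $p\le m$ is needed here). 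Collecting the two parts gives $\bE[|\hx_t^{i,N}-\hx_{\kt}^{i,N}|^p]\le Ch^{p/2}$ with $C$ independent of $i,n,h,N,M$; taking suprema over $i\in\llbracket 1,N\rrbracket$ and $t\in[0,T]$ concludes. The only mildly delicate point is the superlinear growth of $v$: the convolution term forces a moment of order $(q+1)p$ of the split-step value $Y_n^{\star,N}$ rather than of order $p$, which is exactly why the hypothesis $m\ge(q+1)p$ is the one required to invoke Theorem \ref{theorem: discrete moment bound}; once that a priori bound is available the estimate is the routine one-step increment argument.
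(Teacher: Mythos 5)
Your proposal is correct and is exactly the argument the paper intends: the paper omits the details, deferring to \cite[Proposition 4.7]{2021SSM}, and that argument is precisely your one-step decomposition into a drift increment (bounded via the superlinear growth of $v$ and the discrete moment bounds of Theorem \ref{theorem: discrete moment bound}, which is where the hypothesis $m\ge(q+1)p$ enters) plus a Gaussian increment of order $h^{p/2}$. You correctly identify the only nontrivial point, namely that the convolution term forces moments of order $(q+1)p$ of $Y_n^{\star,N}$, handled by Jensen's inequality on the empirical average and the identical distribution of the particles.
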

\begin{proof}
Under Assumption \ref{Ass:Monotone Assumption}, and carefully applying Young's and Jensen's inequality, one can argue similarly as to \cite[Proposition 4.7]{2021SSM} and obtain the result (we omit further details).
\end{proof}

\subsection{Proof of Theorem \ref{theorem:SSM: strong error 2}, the uniform convergence result in path-space}
\label{subsection:Proof of strong error 2}

We now prove Theorem \ref{theorem:SSM: strong error 2}. %Throughout we follow the framework introduced in Theorem \ref{theorem:SSM: strong error 2}.  

\begin{proof}[Proof of Theorem \ref{theorem:SSM: strong error 2}]Let Assumption \ref{Ass:Monotone Assumption} hold. Let $i\in\llbracket 1,N\rrbracket$, $t\in[0,T]$, suppose  $m \ge \max\{4q+4,2+q+q/\varepsilon \}$, where $X^i_0\in L^m_0(\bR^d)$, $q$ is as given in Assumption \ref{Ass:Monotone Assumption}. From \eqref{eq:momentboundParticiInteractingSystem} and \eqref{eq:SSTM:momentbound for split-step time extension process00}, both process $X^{i,N}$ and $\hx^{i,N}$ have sufficient bounded moments for the following proof. 
Define $\Delta X^i:=X^{i,N}-\hx^{i,N}$. It\^o's formula applied to $|X_{t}^{i,N}-\hx_{t}^{i,N}|^2=|\Delta X^i_t|^2$ yields  
\begin{align}
    |\Delta X^i_t|^2 \label{eq:Conv of big theom:term 11 } 
    =&2\int_0^t \Big\langle  v(X_{s}^{i,N},\mu_{s}^{X,N})-v(Y_{\kappa(s)}^{i,\star,N},\hm^{Y,N}_{\kappa(s)}) ,\Delta X^i_s \Big\rangle \dd s  
    \\\label{eq:Conv of big theom:term 22 }
        &+2  \int_0^t \Big\langle  b(s,X_{s}^{i,N},\mu_{s}^{X,N})-b(\kappa(s),Y_{\kappa(s)}^{i,\star,N},\hm^{Y,N}_{\kappa(s)}),\Delta X^i_s\Big\rangle \dd s
    \\    \label{eq:Conv of big theom:term 33 }
    &+  \int_0^t \Big| \sigma(s,X_{s}^{i,N},\mu_{s}^{X,N})-\sigma(\kappa(s),Y_{\kappa(s)}^{i,\star,N},\hm^{Y,N}_{\kappa(s)})\Big|^2 \dd s
    \\\label{eq:Conv of big theom:term 44 }
    &+2\int_0^t \Big\langle \Delta X^i_s,\Big(\sigma(s,X_{s}^{i,N},\mu_{s}^{X,N}) - \sigma(\kappa(s),Y_{\kappa(s)}^{i,\star,N},\hm^{Y,N}_{\kappa(s)})\Big) \dd W^i_s\Big\rangle.
\end{align}
We analyse the above terms one by one and will take supremum over time with expectation. For \eqref{eq:Conv of big theom:term 11 }, 
\begin{align}
\nonumber
     \big\langle  v(X_{s}^{i,N},\mu_{s}^{X,N})&-v(Y_{\kappa(s)}^{i,\star,N},\hm^{Y,N}_{\kappa(s)}) ,\Delta X^i_s \big\rangle
    \\
    \label{eq:se2:v-v term}
    =&
  \big\langle  v(X_{s}^{i,N},\mu_{s}^{X,N})-v(\hx_{s}^{i,N},\hm^{X,N}_{s}),\Delta X^i_s \big\rangle
     +    
    \big\langle v(\hx_{s}^{i,N},\hm^{X,N}_{s})-v(Y_{\kappa(s)}^{i,\star,N},\hm^{Y,N}_{\kappa(s)}),\Delta X^i_s \big\rangle. 
\end{align}
For the first term above, by  Assumption \ref{Ass:Monotone Assumption} and using Remark \ref{remark:ImpliedProperties} 
\begin{align}
\nonumber
    \bE\Big[  \sup_{0\le t \le T} 
    &
    \int_0^t  \Big\langle  v(X_{s}^{i,N},\mu_{s}^{X,N})-v(\hx_{s}^{i,N},\hm^{X,N}_{s}),\Delta X^i_s \Big\rangle \dd s \Big] 
    \\\nonumber
    \le&
    \bE\Big[  \int_0^T    
    \frac{C}{N} \sum_{j=1}^N 
    \Big|
     f(X_{s}^{i,N}-X_{s}^{j,N})-f(\hx_{s}^{i,N}-\hx_{s}^{j,N}) \Big| |\Delta X^i_s |
    \dd s \Big]
    \\\nonumber
     &  + 
    \bE\Big[  \sup_{0\le t \le T} 
    \int_0^t  \Big\langle  u(X_{s}^{i,N},\mu_{s}^{X,N})-u(\hx_{s}^{i,N},\hm^{X,N}_{s}),\Delta X^i_s \Big\rangle \dd s \Big]
    \\
    \label{eq:se2:special term}
    \le&
    \bE\Big[  \int_0^T      \frac{C}{N} \sum_{j=1}^N 
    \Big\{
    \Big( 1+|X_{s}^{i,N}-X_{s}^{j,N} |^q +|\hx_{s}^{i,N}-\hx_{s}^{j,N} |^q   \Big) 
     | \Delta X^i_s-\Delta X^j_s |    |\Delta X^i_s | \Big\} \dd s \Big]
     \\\nonumber
     &+
     \bE\Big[  \int_0^T    \Big(  \widehat L_u |\Delta X^i_s |^2 
     +    \frac{ L_{\tilde{u} }	}{2N}  \sum_{j=1}^N  |\Delta X^j_s |^2  \Big)   \dd s \Big].
\end{align}
To deal with \eqref{eq:se2:special term}, using the following notations, for all $i,j \in \llbracket 1,N \rrbracket$, 
\begin{align}
\nonumber
    G_7^{i,j,s}=\Big( 1+|X_{s}^{i,N}-X_{s}^{j,N} |^q +|\hx_{s}^{i,N}-\hx_{s}^{j,N} |^q   \Big)
    \quad\textrm{and}\quad 
    G_8^{i,j,s}= | \Delta X^i_s-\Delta X^j_s |    |\Delta X^i_s |.
\end{align}
 The combination of $G_7^{i,j,s}$ and $G_8^{i,j,s}$ makes it difficult to obtain a domination via  $ |\Delta X^i_s |^2$, we overcome this by applying Chebyshev's inequality as follows.
\color{black}
The indicator function is denoted as $\1_{\{\Omega\}}$. Recall the moment  bound results on $X,\hx$ in \eqref{eq:momentboundParticiInteractingSystem} and \eqref{eq:SSTM:momentbound for split-step time extension process00} respectively. Now, using Theorem \ref{theorem:SSM: strong error 1}, Proposition \ref{prop:auxilary for multipy of r.v.} and Young's inequality, we have 
\begin{align}
\label{eq:proof strong errr2:ep 1}
\bE \big[   G_7^{i,j,s}G_8^{i,j,s}   \big]
&=
\bE \Big[   G_7^{i,j,s}G_8^{i,j,s} (\1_{\{ G_7^{i,j,s}<M^{\varepsilon} \}}) \Big]
+\bE \Big[   G_7^{i,j,s}G_8^{i,j,s} (\1_{\{ G_7^{i,j,s}\ge M^{\varepsilon} \}}) \Big]
\\
\nonumber 
&\le
\bE \big[ M^{\varepsilon}  G_8^{i,j,s}  \big]
+
\bE \Big[ \frac{ |  G_7^{i,j,s}|^{1/\varepsilon} }{ M}  G_7^{i,j,s} G_8^{i,j,s}  \Big]  
\le
2 \bE \big[ M^{\varepsilon} |\Delta X^i_s |^2 \big]
+
 h \bE \big[  |  G_7^{i,j,s}|^{1/\varepsilon}  G_7^{i,j,s} G_8^{i,j,s}  \big] 
\\
\label{eq:proof strong errr2:ep 2}
&
\le C h^{1-\varepsilon} +h C\Big(1+\bE \Big[ |X_{s}^{i,N} |^{2+q+q/\varepsilon}  +|\hx_{s}^{i,N} |^{2+q+q/\varepsilon} \Big]  \Big)
\le C h^{1-\varepsilon},
\end{align}
where  for the last inequality, we used that the particles are identically distributed and there are sufficiently high  bounded moments available for the process since $m\ge 2+q+q/\varepsilon$.  

% \color{red}
% To deal with \eqref{eq:se2:special term},
% \begin{align}
%     \bE\Big[  &\int_0^T      \frac{C}{N} \sum_{j=1}^N
%     \Big\{
%     \Big( 1+|X_{s}^{i,N}-X_{s}^{j,N} |^q +|\hx_{s}^{i,N}-\hx_{s}^{j,N} |^q   \Big) 
%      | \Delta X^i_s-\Delta X^j_s |    |\Delta X^i_s | \Big\} \dd s \Big]
%      \\
%      &\leq
%      \bE\Big[  \int_0^T      \frac{C}{N} \sum_{j=1}^N
%     \Big\{
%     \Big( 1+|X_{s}^{i,N}-X_{s}^{j,N} |^q +|\hx_{s}^{i,N}-\hx_{s}^{j,N} |^q   \Big) 
%      (| \Delta X^i_s|+|\Delta X^j_s |  )  |\Delta X^i_s | \Big\} \dd s \Big]
%      \\
%      &\leq
%      \bE\Big[  \int_0^T      \frac{C}{N} \sum_{j=1}^N
%     \Big\{
%     \Big( 1+|X_{s}^{i,N}-X_{s}^{j,N} |^q +|\hx_{s}^{i,N}-\hx_{s}^{j,N} |^q   \Big) 
%      (| \Delta X^i_s|^2+|\Delta X^j_s |^2  )  \Big\} \dd s \Big]
% \end{align}

% \color{black}

Thus, for the first term in \eqref{eq:se2:v-v term} and using that the particles are identically distributed, we conclude that 
\begin{align}
      \bE\Big[  \sup_{0\le t \le T} \int_0^t  \Big\langle  v(X_{s}^{i,N},\mu_{s}^{X,N})-v(\hx_{s}^{i,N},\hm^{X,N}_{s}),\Delta X^i_s \Big\rangle \dd s \Big]
    \le 
    C\bE\Big[ \int_0^T |\Delta X^i_s|^2  \dd s \Big]+
     C h^{1-\varepsilon}.
\end{align}
For the second term in \eqref{eq:se2:v-v term}, under Assumption \ref{Ass:Monotone Assumption}, using Young's inequality, Jensen's inequality, and Proposition \ref{prop:SSTM: local increment error00} we have 
\begin{align}
\label{eq:se2: v-v term 2 sup form}
\bE \Big[& \sup_{0\le t \le T} \int_0^t    
    \Big\langle v(\hx_{s}^{i,N},\hm^{X,N}_{s})-v(Y_{\kappa(s)}^{i,\star,N},\hm^{Y,N}_{\kappa(s)}),\Delta X^i_s \Big\rangle \dd s \Big]
    \\ \label{eq:se2:I2 source}
    =&
    \bE\Big[  \sup_{0\le t \le T} \int_0^t    
    \Big\langle u(\hx_{s}^{i,N},\hm^{X,N}_{s})-u(Y_{\kappa(s)}^{i,\star,N},\hm^{Y,N}_{\kappa(s)}),\Delta X^i_s \Big\rangle \dd s \Big]
    \\ \label{eq:se2:I3 source}
    &
    \quad + \bE\Big[ \sup_{0\le t \le T} \int_0^t        \frac{1}{N} \sum_{j=1}^N \langle  f(\hx_{s}^{i,N}-\hx_{s}^{j,N})-f(Y_{\kappa(s)}^{i,\star,N}-Y_{\kappa(s)}^{j,\star,N}),\Delta X^i_s \Big\rangle \dd s \Big]
    \\
    \nonumber 
    \le 
    & 
    \bE\Big[ \int_0^T |\Delta X^i_s|^2  \dd s \Big] +I_2+I_3. 
\end{align}
For $I_2$ (given by the domination of \eqref{eq:se2:I2 source}), by Assumption \ref{Ass:Monotone Assumption}, Young's inequality and Cauchy-Schwarz inequality  
 \begin{align*}
    I_2=&L_{\hat{u}}  \bE\Big[
     \int_0^T
     \Big(1+ |\hx_{s}^{i,N}|^{q} + |Y_{\kappa(s)}^{i,\star,N}|^{q}\Big)^2
     |\hx_{s}^{i,N}-Y_{\kappa(s)}^{i,\star,N}|^2 \Big]
     \dd s   
    \\\nonumber
    \le
    &
    C \int_0^T
    \sqrt{   \bE\Big[  \Big(1+|\hx_{s}^{i,N}|^{2q}+|Y_{\kappa(s)}^{i,\star,N}|^{2q}\Big)^2 
     \Big]
     \bE\big[ 
     |\hx_{s}^{j,N}-Y_{\kappa(s)}^{j,\star,N}|^4
     \big]
     } \dd s.
\end{align*}
For $I_3$ (given by the domination of \eqref{eq:se2:I3 source} after extracting the $|\Delta X^i|$ term), by Assumption \ref{Ass:Monotone Assumption}, Young's inequality and Cauchy-Schwarz inequality 
\begin{align*}
    I_3=&\frac{C L_{\hat{f}}}{N} \sum_{j=1}^N \bE\Big[
     \int_0^T
     \Big(1+ |\hx_{s}^{i,N}-\hx_{s}^{j,N}|^{q} + |Y_{\kappa(s)}^{i,\star,N}-Y_{\kappa(s)}^{j,\star,N}|^{q}\Big)^2~
     \big|(\hx_{s}^{i,N}-\hx_{s}^{j,N})-(Y_{\kappa(s)}^{i,\star,N}-Y_{\kappa(s)}^{j,\star,N})\big|^2 \Big]
     \dd s   
    \\\nonumber
    \le&
     \frac{C}{N} \sum_{j=1}^N \int_0^T
    \sqrt{   \bE\Big[  \Big(1+|\hx_{s}^{j,N}|^{2q}+|\hx_{s}^{i,N}|^{2q}+|Y_{\kappa(s)}^{i,\star,N}|^{2q}+|Y_{\kappa(s)}^{j,\star,N}|^{2q}\Big)^2 
     \Big]
     \bE\big[ 
     |\hx_{s}^{j,N}-Y_{\kappa(s)}^{j,\star,N}|^4
     \big]
     } \dd s.
\end{align*}
By \eqref{eq:SSTM:scheme 1}, Assumption \ref{Ass:Monotone Assumption}, Young's inequality, Jensen's inequality, since $m\ge 4q+4$, and by     Theorem \ref{theorem: discrete moment bound},  we have 
\begin{align*}
    \bE\big[ &| \hx_{\kappa(s)}^{i,N}-Y_{\kappa(s)}^{i,\star,N}|^4
     \big]
     =
     \bE\big[ | h v(Y_{\kappa(s)}^{i,\star,N},\hm^{Y,N}_\kappa(s)) |^4
     \big] 
     \\
     &
     \le 
     C h^4\bE\big[ | u(Y_{\kappa(s)}^{i,\star,N},\hm^{Y,N}_\kappa(s)) |^4
     \big]
     +
     \frac{Ch^4}{N}
     \sum_{j=1}^N\bE\big[ |f(Y_{\kappa(s)}^{i,\star,N}-Y_{\kappa(s)}^{j,\star,N}) |^4
     \big] 
     \\
     &\le
      C h^4\bE\Big[ 1+ |Y_{\kappa(s)}^{i,\star,N}|^{4q+4}
      + \frac{1}{N} \sum_{j=1}^N |Y_{\kappa(s)}^{j,\star,N}|^{4}
     \Big]
     +
     \frac{Ch^4}{N} \sum_{j=1}^N \bE\Big[ (1+ |Y_{\kappa(s)}^{i,\star,N}-Y_{\kappa(s)}^{j,\star,N}|^{4q} ) |Y_{\kappa(s)}^{i,\star,N}-Y_{\kappa(s)}^{j,\star,N}|^4 \Big]
     \\
     &\le  \frac{Ch^4}{N} \sum_{j=1}^N \bE\big[ 1+ |Y_{\kappa(s)}^{j,\star,N}|^{4q+4}  \big]
     \le C h^4    .
\end{align*}  
Using this inequality in combination with Proposition \ref{prop:SSTM: local increment error00} allows us to conclude that 
\begin{align}
\label{eq:diff between X hat t and Y star kttt}
     \bE\big[ |\hx_{s}^{j,N}-Y_{\kappa(s)}^{j,\star,N}|^4
     \big]
     \le
      C\bE\Big[ |\hx_{s}^{j,N}-\hx_{\kappa(s)}^{j,N}|^4+| \hx_{\kappa(s)}^{j,N}-Y_{\kappa(s)}^{j,\star,N}|^4
     \Big]\le Ch^2.
\end{align}
Thus, for \eqref{eq:se2:v-v term} injected back in \eqref{eq:Conv of big theom:term 11 }, take supremum and expectation, and collecting all the necessary results above, we reach  
\begin{align}
\label{eq:se2:term 1 final result}
    &\bE \Big[ \sup_{0\le t \le T} \int_0^t     
    \Big\langle v(X_{s}^{i,N},\mu^{N}_{s})-v(Y_{\kappa(s)}^{i,\star,N},\hm^{Y,N}_{\kappa(s)}),\Delta X^i_s \Big\rangle \dd s \Big]
    \le
     C\bE\Big[ \int_0^T |\Delta X^i_s|^2 \dd s \Big]+Ch^{1-\varepsilon} .
\end{align}
For the second term \eqref{eq:Conv of big theom:term 22 }, the calculation is similar as in \cite[Proof of Proposition 4.9]{2021SSM}, we conclude that % 
\begin{align}
    \label{eq:se2:term 2 final result}
    &\bE \Big[ \sup_{0\le t \le T} \int_0^t     \Big\langle  b(s,X_{s}^{i,N},\mu_{s}^{X,N})-b(\kappa(s),Y_{\kappa(s)}^{i,\star,N},\hm^{Y,N}_{\kappa(s)}) ,\Delta X^i_s \Big\rangle  \dd s  \Big] 
    \le
    C h +  C  \bE \Big[  \int_0^T  |\Delta X^i_s|^2 \dd s \Big].
\end{align}
Similarly, for the third term \eqref{eq:Conv of big theom:term 33 } (these are just Lipschitz terms), we have 
\begin{align}
 \label{eq:se2:term 3 final result}
    \bE\Big[ \sup_{0\le t \le T} \int_0^t  \Big| \sigma(s,X_{s}^{i,N},\mu_{s}^{X,N})-\sigma(\kappa(s),Y_{\kappa(s)}^{i,\star,N},\hm^{Y,N}_{\kappa(s)})\Big|^2 \dd s \Big]
    \le
    C h + C  \bE \Big[  \int_0^T  |\Delta X^i_s|^2 \dd s \Big] .
\end{align}
Consider the last term \eqref{eq:Conv of big theom:term 44 } -- this is a Lipschitz term and dealt with similarly to \cite[Proof of Proposition 4.9]{2021SSM}. 
Using the Burkholder-Davis-Gundy's, Jensen's and Cauchy-Schwarz inequality, and the above results, 
\begin{align}
\label{eq:se2:term 4 final result}
&\bE\Big[ \sup_{0\le t \le T} \int_0^t   \Big\langle \Delta X^i_s,\Big(\sigma(s,X_{s}^{i,N},\mu_{s}^{X,N}) - \sigma(\kappa(s),Y_{\kappa(s)}^{i,\star,N},\hm^{Y,N}_{\kappa(s)})\Big) \dd W^i_s\Big\rangle  ~\Big] 
\\
& \nonumber
\le 
\frac{1}{4}\bE\big[~ \sup_{0\le t \le T }  |\Delta X^i_t|^2 \big]
+
\bE\Big[\int_0^T  \Big|\sigma(s,X_{s}^{i,N},\mu_{s}^{X,N}) - \sigma(\kappa(s),Y_{\kappa(s)}^{i,\star,N},\hm^{Y,N}_{\kappa(s)})\Big|^2\dd s~\Big]  .
\end{align}
Again, gathering all the above results  \eqref{eq:se2:term 1 final result},
\eqref{eq:se2:term 2 final result},
\eqref{eq:se2:term 3 final result}, 
and 
\eqref{eq:se2:term 4 final result}, plugging them back into \eqref{eq:Conv of big theom:term 11 }, after taking supremum over $t\in[0,T]$ and expectation, for all $i\in \llbracket 1,N \rrbracket $ we have 
\begin{align*}
  \bE\big[ \sup_{0\le t \le T}|\Delta X^i_t|^2  ~\big] 
&\le    
Ch^{1-\varepsilon}+ C\bE \Big[~ \int_0^T  \sup_{0\le u \le s}|\Delta X^i_u|^2   \dd s    ~\Big]+\frac{1}{2} \bE \big[\sup_{0\le t \le T }  |\Delta X^i_t|^2\big]
\\
&
\le    
Ch^{1-\varepsilon} +C\int_0^T   \bE \big[\sup_{0\le u \le s} |\Delta X^i_u|^2 ~\big] \dd s .
\end{align*}
Gr\"onwall's lemma delivers the final result after taking supremum over $i\in \llbracket 1,N \rrbracket $.
\end{proof}

%%%%%%%%%%%%%%%%%%%%%%%%%%%%%%%%%%%%%%%%%%%%%%%%%%%%%%%%%%%%%%%%
%%%%%%%%%%%%%%%%%%%%%%%%%%%%%%%%%%%%%%%%%%%%%%%%%%%%%%%%%%%%%%%%%%
%%%%%%%%%%%%%%%%%%%%%%%%%%%%%%%%%%%%%%%%%%%%%%%%%%%%%%%%%%%%%%%%

\subsection{Discussion on the granular media type equation}\label{subsection:Proof of strong error 3 GM}

Throughout $C>0$ denotes a constant always independent of $h,N,M$ but possibly depending on $T$ and $m$. 

\begin{proof}[Proof of Proposition \ref{Prop:Propagation of Chaos}]
Recall the proof of \eqref{eq:Conv of big theom:term 11 } in Section \ref{subsection:Proof of strong error 2}. Under Assumption \ref{Ass:GM Assumption}, for all $i\in\llbracket 1,N\rrbracket$, $t\in[0,T]$, 
and using arguments similar to those of \eqref{eq:se2:v-v term} we have  
\begin{align}
    \nonumber
    \Delta X^i_t=&~X_{t}^{i,N}-\hx_{t}^{i,N} =\int_0^t         v(X_{s}^{i,N},\mu_{s}^{X,N})-v(Y_{\kappa(s)}^{i,\star,N},\hm^{Y,N}_{\kappa(s)})~  \dd s,
    \\
    \Rightarrow \bE \big[       |\Delta X^i_t|^2 \big] \label{eq:gm:proof: delta x term 1 } 
    \le&~
    2\int_0^t \bE \Big[       \Big\langle  v(X_{s}^{i,N},\mu_{s}^{X,N})-v(\hx_{s}^{i,N},\hm^{X,N}_{s}) ,\Delta X^i_s \Big\rangle \Big] \dd s  
    \\
    \label{eq:gm:proof: delta x term 2 }
    & ~+
    2\int_0^t \bE \Big[        \Big\langle   v(\hx_{s}^{i,N},\hm^{X,N}_{s})-v(Y_{\kappa(s)}^{i,\star,N},\hm^{Y,N}_{\kappa(s)}) ,\Delta X^i_s \Big\rangle \Big] \dd s .
\end{align}
For \eqref{eq:gm:proof: delta x term 1 }, arguing as in \eqref{eq:se1:y-y v-v}, Remark \ref{remark:OSL for the whole function / system V} and using that the particles are identically distributed, we have 
\begin{align}
    \bE &\Big[       \Big\langle  v(X_{s}^{i,N},\mu_{s}^{X,N})-v(\hx_{s}^{i,N},\hm^{X,N}_{s}) ,\Delta X^i_s \Big\rangle \Big]  
    \label{eq:gm:proof: delta x term 1 results} 
    \le 2L_f^+  \bE \big[       |\Delta X^i_s|^2 \big].
\end{align}
For \eqref{eq:gm:proof: delta x term 2 }, it is similar to the above, we have 
\begin{align}
      2\int_0^t \bE &\Big[        \Big\langle   v(\hx_{s}^{i,N},\hm^{X,N}_{s})-v(Y_{\kappa(s)}^{i,\star,N},\hm^{Y,N}_{\kappa(s)}) ,\Delta X^i_s \Big\rangle \Big] \dd s 
    \label{eq:gm:proof: delta x term 21} 
    =
      \frac{2}{N} \sum_{j=1}^N\int_0^t 
    \bE\Big[       
    \Big\langle f(\Delta^{X,i,j}_s) -f( \Delta^{Y,i,j}_\ks)
    ,\Delta X^i_s \Big\rangle \Big] \dd s, 
\end{align}
where we introduce the following handy notation (recall \eqref{eq:SSTM:scheme 1} and \eqref{eq: scheme continous extension in SDE form}) 
\begin{align}
\nonumber 
 \Delta^{X,i,j}_t&=\hx_{s}^{i,N}-\hx_{s}^{j,N},
    \qquad
    \Delta^{Y,i,j}_\ks=Y_{\kappa(s)}^{i,\star,N}-Y_{\kappa(s)}^{j,\star,N},
\\ \nonumber
     \Delta^{X,i,j}_s
    &=  \Delta^{X,i,j}_\ks+G_9^{i,j,s}(s-\ks)+G_{10}^{i,j,s},
    \qquad
    \label{eq:gm:G7 comes}
     \Delta^{Y,i,j}_\ks
    = \Delta^{X,i,j}_\ks+G_{9}^{i,j,s} h,
\\  
    G_{9}^{i,j,s}
    &= \Big(  v(Y_{\kappa(s)}^{i,\star,N},\hm^{Y,N}_{\kappa(s)})-v(Y_{\kappa(s)}^{j,\star,N},\hm^{Y,N}_{\kappa(s)})\Big)
    \quad\textrm{and}\quad 
    G_{10}^{i,j,s}
    =\sigma\Big(  (W_s^i- W_{\ks}^i)-  (W_s^j- W_{\ks}^j)  \Big).
\end{align}
We now proceed to estimate \eqref{eq:gm:proof: delta x term 21}. 
By the mean value theorem under Assumption \ref{Ass:GM Assumption}, for \eqref{eq:gm:proof: delta x term 21}, there exist $\rho_1,\rho_2 \in [0,1]$ such that \begin{align*}
      f(\Delta^{X,i,j}_s) 
    =& 
    f (\Delta^{X,i,j}_\ks)
    +
     \nabla f (\Delta^{X,i,j}_\ks) \Big( G_{9}^{i,j,s}(s-\ks)+G_{10}^{i,j,s} \Big) 
    + \int_{\Delta^{X,i,j}_\ks}^{\Delta^{X,i,j}_s} \Big( \nabla f(u) -\nabla f (\Delta^{X,i,j}_\ks)  \Big)  du
     \\
     =&
    f (\Delta^{X,i,j}_\ks)
    +
     \nabla f (\Delta^{X,i,j}_\ks) \Big( G_{9}^{i,j,s}(s-\ks)+G_{10}^{i,j,s} \Big) 
     \\
     &+ \Big( \nabla f\big(\Delta^{X,i,j}_\ks +\rho_1 ( G_{9}^{i,j,s}(s-\ks)+G_{10}^{i,j,s})  \big) -\nabla f (\Delta^{X,i,j}_\ks)  \Big)    
     \Big(     \Delta^{X,i,j}_s-\Delta^{X,i,j}_\ks       \Big)  , 
     \\
     f(\Delta^{Y,i,j}_\ks) 
    =& 
    f (\Delta^{X,i,j}_\ks)
    +
     \nabla f (\Delta^{X,i,j}_\ks) \Big( G_{9}^{i,j,s} h \Big)
        +  
     \Big( \nabla f\big(\Delta^{X,i,j}_\ks +\rho_2 ( G_{10}^{i,j,s}h)  \big) -\nabla f (\Delta^{X,i,j}_\ks)  \Big) 
     \Big(    \Delta^{Y,i,j}_\ks-\Delta^{X,i,j}_\ks       \Big). 
\end{align*}
 
Note that only $G_{10}$ contains the Brownian increments. From the above, there exists $\rho_{1,s},~\rho_{2,s}\in[0,1]$ for all $s\in[0,T]$, and by Young's inequality, we have 
\begin{align}
    \label{eq:gm:f delta term 00}
    \int_0^t 
    \bE\Big[   &   \Big\langle  f(\Delta^{X,i,j}_s) 
    -f(\Delta^{Y,i,j}_\ks)
    ,\Delta X^i_s \Big\rangle \Big] \dd s
    \\
    \label{eq:gm:f delta term 11}
    \le&
    \int_0^t 
    \bE\Big[      \Big\langle  \nabla f
    (\Delta^{X,i,j}_\ks) 
    \Big( G_{9}^{i,j,s}(s-h-\ks)+G_{10}^{i,j,s} \Big) 
    ,\Delta X^i_s \Big\rangle \Big] \dd s
    + 
    C \int_0^t 
    \bE\Big[     |\Delta X^i_s|^2  \Big] \dd s
    \\
    \label{eq:gm:f delta term 22}
    &+ C
    \int_0^t 
     \bE\Big[    \Big| \nabla f\Big(\Delta^{X,i,j}_\ks +\rho_{1,s} ( G_{9}^{i,j,s}(s-\ks)+G_{10}^{i,j,s})  \Big) -\nabla f (\Delta^{X,i,j}_\ks)  \Big|^2  
     \Big|    \Delta^{X,i,j}_s-\Delta^{X,i,j}_\ks      \Big|^2   \Big] \dd s
     \\
     \label{eq:gm:f delta term 33}
    &+ C
    \int_0^t 
     \bE\Big[    \Big|\nabla f\big(\Delta^{X,i,j}_\ks +\rho_{2,s} ( G_{9}^{i,j,s}h)  \big) -\nabla f (\Delta^{X,i,j}_\ks) \Big|^2  
     \Big|   \Delta^{Y,i,j}_\ks-\Delta^{X,i,j}_\ks         \Big|^2   \Big] \dd s.
\end{align}
For the first term of \eqref{eq:gm:f delta term 11}, by Young's inequality 
\begin{align}
    &\int_0^t 
    \bE\Big[      \Big\langle  \nabla f
    (\Delta^{X,i,j}_\ks) 
    \Big( G_{9}^{i,j,s}(s-h-\ks)+G_{10}^{i,j,s} \Big) 
    ,\Delta X^i_s \Big\rangle \Big] \dd s
    \\
    \label{eq:gm:f delta term 11-11}
    &\le
    C \int_0^t 
    \bE\Big[     |\Delta X^i_s|^2  \Big] \dd s
    + 
    C \int_0^t 
    \bE\Big[      \Big| \nabla f
    (\Delta^{X,i,j}_\ks) 
     G_{9}^{i,j,s}(s-h-\ks)\Big|^2 
     \Big] \dd s
     \\
     \label{eq:gm:f delta term 11-22}
          & \quad +
     \int_0^t 
    \bE\Big[      \Big\langle  
    \nabla f
    (\Delta^{X,i,j}_\ks)~ G_{10}^{i,j,s},\Delta X^i_s -\Delta X^i_{\ks} \Big\rangle \Big] \dd s
    +
    \int_0^t 
    \bE\Big[      \Big\langle  
    \nabla f
    (\Delta^{X,i,j}_\ks)~ G_{10}^{i,j,s},\Delta X^i_{\ks} \Big\rangle \Big] \dd s. 
\end{align} 
For the second term of \eqref{eq:gm:f delta term 11-11}, since $m\ge4q+2$, by Assumption \ref{Ass:GM Assumption} and Theorem \ref{theorem：moment bound for the big theorm time extensions }, using calculations similar to those in \eqref{eq:se1:f-f things} and Proposition \ref{prop:auxilary for multipy of r.v.},  we have 
\begin{align*}
    C \int_0^t 
    \bE\Big[      \Big| \nabla f
    (\Delta^{X,i,j}_\ks) 
     G_{9}^{i,j,s}(s-h-\ks)\Big|^2 
     \Big] \dd s
     \le C h^2
      \int_0^t 
    \bE\Big[   1+| \hx_{\ks}^{i,N}|^{4q+2} 
    + | Y_{\ks}^{i,\star,N}|^{4q+2}\Big] \dd s
    \le Ch^2.
\end{align*}
By Jensen's inequality and calculations close to those for $I_3$ in \eqref{eq:se2:I3 source}, since $m\ge4q+2$, we have 
\begin{align}
   \bE \big[ |\Delta X^i_t &-\Delta X^i_{\kt}  |^2 \big] 
    =\bE \Big[       \Big|\int_{\kt}^t \Big(  v(X_{s}^{i,N},\mu_{s}^{X,N})-v(Y_{\kappa(s)}^{i,\star,N},\hm^{Y,N}_{\kappa(s)})\Big) ~ \dd s  \Big|^2 \Big]
    \\ 
     \le& h \int_{\kt}^t 
     \frac{1}{N} \sum_{j=1}^N
     \bE \Big[        \Big| f(X_{s}^{i,N}-X_{s}^{i,N})-f(Y_{\kappa(s)}^{i,\star,N}-Y_{\kappa(s)}^{i,\star,N})\Big|^2\Big] ~ \dd s
     \le Ch^3.
\end{align}
Thus, for the first term of \eqref{eq:gm:f delta term 11-22}, by Cauchy-Schwarz inequality and the properties of the Brownian increment 
\begin{align*} 
   \int_0^t 
    \bE\Big[   &   \Big\langle  
    \nabla f
    (\Delta^{X,i,j}_\ks)~ G_{10}^{i,j,s},\Delta X^i_s -\Delta X^i_{\ks} \Big\rangle \Big] \dd s 
     \leq
    \int_0^t  
    \sqrt{\bE \big[  \big| 
    \nabla f     (\Delta^{X,i,j}_\ks)~ G_{10}^{i,j,s} 
    \big|^2     \big]  }
    \sqrt{\bE \big[  \big|    
    \Delta X^i_s -\Delta X^i_{\ks}
    \big|^2 \big] } 
     \dd s
     \le 
     Ch^2.
\end{align*}
For the second term of \eqref{eq:gm:f delta term 11-22}, since $ G_{10}^{i,j,s}$ of \eqref{eq:gm:G7 comes} is conditionally independent of $\Delta^{X,i,j}_\ks$ and $\Delta X^{i}_\ks $ (and contains the Brownian increments), the tower property yields
\begin{align}
    \label{eq:gm:f delta term 11-22 result 2 }
    \int_0^t 
    \bE\Big[   &   \Big\langle  
    \nabla f
    (\Delta^{X,i,j}_\ks)~ G_{10}^{i,j,s},\Delta X^i_{\ks} \Big\rangle \Big] \dd s
    = 0. 
\end{align}
Thus, plugging the above results back into \eqref{eq:gm:f delta term 11}, we conclude that 
\begin{align}
     \label{eq:gm:f delta term 11-result}
     \int_0^t 
    \bE\Big[      \Big\langle  \nabla f
    (\Delta^{X,i,j}_\ks) 
    \Big( G_{9}^{i,j,s}(s-h-\ks)+G_{10}^{i,j,s} \Big) 
    ,\Delta X^i_s \Big\rangle \Big] \dd s \le Ch^2.
\end{align}
For  \eqref{eq:gm:f delta term 22}, by Assumption \ref{Ass:GM Assumption}, Cauchy-Schwarz inequality and the properties of the Brownian increment, and the condition $m\ge \max\{8q,~4q+4\}$  
\begin{align*}
    &\bE\Big[    \big| \nabla f\big(\Delta^{X,i,j}_\ks +\rho_{1,s} ( G_{9}^{i,j,s}(s-\ks)+G_{10}^{i,j,s})  \big) -\nabla f \big(\Delta^{X,i,j}_\ks\big)  \big|^4  \Big]
    \\
    &\le C \bE\Big[ \big| \Big(  1
    + \big|\Delta^{X,i,j}_\ks 
    + \rho_{1,s} \big( G_{9}^{i,j,s}(s-\ks)+G_{10}^{i,j,s}\big)  \big|^{q-1} +  \big|\Delta^{X,i,j}_\ks\big|^{q-1}\Big) 
    \big| \rho_{1,s} \big( G_{9}^{i,j,s}(s-\ks)+G_{10}^{i,j,s}\big)   \big|^4  \Big] \le  Ch^2,
 \end{align*}
and 
 \begin{align*}
    &\bE\big[    \big|    \Delta^{X,i,j}_s-\Delta^{X,i,j}_\ks \big|^4 \big]
    \le 
    C \bE\Big[ \big| \big( G_{9}^{i,j,s}(s-\ks)+G_{10}^{i,j,s}\big)\big|^4 \Big] 
    \le  Ch^2.
\end{align*}
Thus, using  Cauchy-Schwarz inequality again and the results above we conclude that 
\begin{align}
    \label{eq:gm:f delta term 22-result}
    \int_0^t 
     \bE\Big[    \big| \nabla f\Big(\Delta^{X,i,j}_\ks 
     +\rho_{1,s} \big( G_{9}^{i,j,s}(s-\ks)+G_{10}^{i,j,s}\big)  \Big) -\nabla f (\Delta^{X,i,j}_\ks)  \big|^2  
     \big|    \Delta^{X,i,j}_s-\Delta^{X,i,j}_\ks      \big|^2   \Big] \dd s
     \le Ch^2  .
\end{align}
For \eqref{eq:gm:f delta term 33}, recall \eqref{eq:gm:G7 comes}. Similarly to above, by assumption $m \ge 4q+2$  and hence 
\begin{align}
    \label{eq:gm:f delta term 33-result}
     \int_0^t 
     \bE\Big[  ~  \big|\nabla f\big( \Delta^{X,i,j}_\ks +\rho_{2,s}  G_{9}^{i,j,s}h  \big) -\nabla f \big(\Delta^{X,i,j}_\ks\big) \big|^2  ~
     \big|  G_{9}^{i,j,s} h \big|^2  ~ \Big] \dd s  \le Ch^2.
\end{align}
Thus, plugging \eqref{eq:gm:f delta term 11-result}, \eqref{eq:gm:f delta term 22-result} and \eqref{eq:gm:f delta term 33-result} back into \eqref{eq:gm:f delta term 00}, yields
\begin{align}
    \label{eq:gm:proof: delta x term 2 results}
    \int_0^t 
    \bE\Big[      \Big\langle  f(\Delta^{X,i,j}_s) 
    -f(\Delta^{Y,i,j}_\ks)
    ,\Delta X^i_s \Big\rangle \Big] \dd s
    &\le
    Ch^2+
    C \int_0^t 
    \bE\big[     |\Delta X^i_s|^2  \big] \dd s.
\end{align}
Plug the above result and \eqref{eq:gm:proof: delta x term 1 results} back to \eqref{eq:gm:proof: delta x term 1 }, we conclude that, for  all $i\in\llbracket 1,N\rrbracket$, $t\in[0,T]$
\begin{align}
    \bE \big[       |\Delta X^i_t|^2 \big] 
    % |X_{t}^{i,N}-\hx_{t}^{i,N} |^2
    \le&
    C\int_0^t \bE \big[   |\Delta X^i_s|^2     \big] \dd s  +Ch^2.
\end{align}
Gr\"onwall's lemma delivers the final result after taking supremum over $i\in \llbracket 1,N \rrbracket $.
\end{proof}

\appendix

%
%
%
%
%
%
%%%%%%%%%%%%%%%%%%%%%%%%%%%%%%%%%%%%%%%%%%%%%%%%%%%%%%%%%%%%%%
\section{Well-posedness of the particle system and the PoC --  Proposition \ref{Prop:Propagation of Chaos} }
\label{appendix: proof of moment bound for PC}

The Propagation of chaos result \eqref{eq:poc result} follows directly from \cite[Theorem 3.14]{adams2020large}. The gap we close is the well-posedness result for the interacting particle system and the moment bound result.    Note that throughout $C>0$ is a constant always independent of $h,N,M$ but possibly depending on $T$ and $m$.

\begin{proof}[Proof of Proposition \ref{Prop:Propagation of Chaos}]  
We start by interpreting the interacting particle system \eqref{Eq:MV-SDE Propagation} as a single SDE in $\bR^{Nd}$. In Remark \ref{remark:OSL for the whole function / system V} we show that, as a system in $\bR^{Nd}$, the function $V$ (see \eqref{eq:Define-Function-V} and \eqref{Eq:General MVSDE shape of v}) satisfies a one-sided Lipschitz condition (as a map in $\bR^{Nd}$). Thus: (i) the drift term of the whole system also satisfies one-sided Lipschitz condition as $b$ satisfies a uniformly Lipschitz condition by $(\mathbf{A}^b)$; (ii) the diffusion coefficient satisfies a Lipschitz condition (by $(\mathbf{A}^\sigma)$). In conclusion, the well-posedness of the interacting particle SDE $\bR^{Nd}$-system is ensured by standard SDE results \cite[Theorem 3.5 (p.58)]{mao2008stochastic}. 
% \sout{or {\cite[Theorem  3.2]{adams2020large}} }.

The moment bound result of the $\bR^{Nd}$-system that follows from \cite[Theorem 3.5 (p.58)]{mao2008stochastic} does not lead to \eqref{eq:momentboundParticiInteractingSystem} as the constant appearing on the right-hand side \textit{depends on $N$} and explode as $N\nearrow \infty$. Nonetheless, with well-posedness at hand, we are able to improve the bound and show \eqref{eq:momentboundParticiInteractingSystem}. 
 
The strategy of the proof is the same as that in Section \ref{subsection: Moment bound of the SSM}. For all $m\ge 2p\ge 2$,  $i\in \llbracket 1,N \rrbracket$, $t\in[0,T]$, we have 
\begin{align}
\nonumber 
     \bE \big[ |X_{t}^{i,N} |^{2p}   \big] 
     &=
     \bE \Big[   \Big|    \frac{1}{N} \sum_{j=1}^N \big(   X_{t}^{i,N}- X_{t}^{j,N}\big)  
    +\frac{1}{N} \sum_{j=1}^N X_{t}^{j,N} 
    \Big|^{2p} 
    \Big]
    \\
    \nonumber
    &\le 4^p \bE \Big[       \frac{1}{N} \sum_{j=1}^N |X_{t}^{i,N}- X_{t}^{j,N} |^{2p} \Big]
    +
    4^p \bE \Big[     \Big|\frac{1}{N} \sum_{j=1}^N   |X_{t}^{j,N}|^2 \Big|^{p}     \Big]
    \\
    \label{eq:apdx:two terms}
    &\le 4^p \bE \Big[       |X_{t}^{i,N}- X_{t}^{j,N} |^{2p} \Big]_{i\neq j}
    +
    4^p \bE \Big[     \Big|\frac{1}{N} \sum_{j=1}^N   |X_{t}^{j,N}|^2 \Big|^{p}     \Big].
\end{align}

For the first term in \eqref{eq:apdx:two terms}, by It\^o's formula, for $i,j\in \llbracket 1,N \rrbracket, i\neq j $,

\begin{align*}
     |X_{t}^{i,N}- &X_{t}^{j,N} |^{2p}
    = 
    |X_{0}^{i,N}- X_{0}^{j,N} |^{2p}
    \\
    &
    + 2p \int_0^t |X_{s}^{i,N}- X_{s}^{j,N} |^{2p-2} \Big\langle  X_{s}^{i,N}- X_{s}^{j,N}, v(X_s^{i,N}, \mu^{X,N}_{s} )- v(X_s^{j,N}, \mu^{X,N}_{s} )  \Big\rangle \dd s
    \\
    &
    +
    2p \int_0^t |X_{s}^{i,N}- X_{s}^{j,N} |^{2p-2} \Big\langle  X_{s}^{i,N}- X_{s}^{j,N}, b(s,X_s^{i,N}, \mu^{X,N}_{s} )- b(s,X_s^{j,N}, \mu^{X,N}_{s} )  \Big\rangle \dd s
     \\
    &
    +
     2p \int_0^t |X_{s}^{i,N}- X_{s}^{j,N} |^{2p-2} \Big\langle  X_{s}^{i,N}- X_{s}^{j,N}, \sigma(s,X_s^{i,N}, \mu^{X,N}_{s} )\dd W_s^i- \sigma(s,X_s^{j,N}, \mu^{X,N}_{s} )\dd W_s^j  \Big\rangle 
    \\
    &
    +
    \frac{2p(2p-1)}{2} \int_0^t |X_{s}^{i,N}- X_{s}^{j,N} |^{2p-2} \Big(  |\sigma(s,X_s^{i,N}, \mu^{X,N}_{s} )|^2+ |\sigma(s,X_s^{j,N}, \mu^{X,N}_{s} )|^2 \Big)   \dd s. 
\end{align*}
By Assumption \ref{Ass:Monotone Assumption}, Remark \ref{remark:ImpliedProperties}, Jensen's inequality,   Proposition \ref{prop:auxilary for multipy of r.v.}, take expectation on both side, by the particles are identically distributed and  Burkholder-Davis-Gundy (BDG) inequality, we have   
\begin{align*}
     \bE\big[ |X_{t}^{i,N}- &X_{t}^{j,N} |^{2p}\big]
    \le 
    \bE\big[|X_{0}^{i,N}- X_{0}^{j,N} |^{2p}\big]
    + C \int_0^t  \bE\big[|X_{s}^{i,N}- X_{s}^{j,N} |^{2p}\big]  \dd s
    + C \int_0^t  \bE\big[|X_{s}^{i,N} |^{2p}\big]  \dd s.
\end{align*}

For the second term in \eqref{eq:apdx:two terms}, similarly, and notice that,
\begin{align*}
    \frac{1}{N}& \sum_{j=1}^N   |X_{t}^{j,N}|^2
    = 
   \frac{1}{N} \sum_{j=1}^N   |X_{0}^{j,N}|^2
    + \frac{1}{N} \sum_{j=1}^N   \int_0^t   \Big\langle   X_{s}^{j,N}, v(X_s^{j,N}, \mu^{X,N}_{s} )  \Big\rangle \dd s 
    +
    \frac{1}{2N} \sum_{j=1}^N  \int_0^t   |\sigma(s,X_s^{j,N}, \mu^{X,N}_{s} )|^2    \dd s
    \\
    &\qquad\qquad\qquad 
    +
      \frac{1}{N} \sum_{j=1}^N \int_0^t  \Big\langle  X_{s}^{j,N},  b(s,X_s^{j,N}, \mu^{X,N}_{s} )  \Big\rangle \dd s
    +
     \frac{1}{N} \sum_{j=1}^N  \int_0^t  \Big\langle   X_{s}^{j,N},  \sigma(s,X_s^{j,N}, \mu^{X,N}_{s} )\dd W_s^j  \Big\rangle 
    \\
    &
    \le 
     \frac{1}{N} \sum_{j=1}^N  \Big( |X_{0}^{j,N}|^2 + 
     \int_0^t  | X_{s}^{j,N}|^2 \dd s
     +
     \int_0^t  \Big\langle   X_{s}^{j,N},  \sigma(s,X_s^{j,N}, \mu^{X,N}_{s} )\dd W_s^j  \Big\rangle
     \Big)
    + 
    \frac{C}{N^2} \sum_{i=1}^N\sum_{j=1}^N   \int_0^t    | X_{s}^{i,N}- X_{s}^{j,N}|^2 \dd s.
\end{align*}
Take power of $p$ on both side and expectations. By Jensen's inequality, BDG inequality, Proposition \ref{prop:auxilary for multipy of r.v.}, Assumption \ref{Ass:Monotone Assumption}, the Lipschitz properties on $\sigma$, we can conclude with the highest order up to $2p$, we have  
\begin{align*}
   \bE\Big[  \Big|\frac{1}{N}  \sum_{j=1}^N   |X_{t}^{j,N}|^2 \Big|^{p}  \Big] 
    &
    \le 
    C+
    C
     \bE \Big[        \frac{1}{N} \sum_{j=1}^N   |X_{0}^{j,N}|^{2p} \Big] 
    + 
     C  \int_0^t    \bE[|X_{s}^{i,N} |^{2p}]  \dd s
    +
       C \int_0^t  \bE\Big[  \Big|\frac{1}{N}  \sum_{j=1}^N   |X_{s}^{j,N}|^2 \Big|^{p}  \Big] \dd s,
\end{align*}
where we used that the particles are identically distributed to deal with the third term on the righ-hand side. 

Collecting all the above results and using \eqref{eq:apdx:two terms} again, we have
\begin{align*}
    \bE \big[ &|X_{t}^{i,N} |^{2p}   \big] 
    \le
   \bE\big[ |X_{t}^{i,N}- X_{t}^{j,N} |^{2p}\big]
   +
   \bE\Big[  \Big| \frac{1}{N} \sum_{j=1}^N   |X_{t}^{j,N}|^2 \Big|^{p}  \Big] 
    \\
    &
    \le 
    \bE\big[|X_{0}^{i,N}- X_{0}^{j,N} |^{2p}\big]
    + 
    C
     \bE \big[        |X_{0}^{i,N}|^{2p} \big] 
    +  
     C    \int_0^t   \Big(\bE\Big[|X_{s}^{i,N}- X_{s}^{j,N} |^{2p}\Big]_{i\neq j}  
    +
     \bE\Big[  \Big| \frac{1}{N} \sum_{j=1}^N   |X_{s}^{j,N}|^2 \Big|^{p}  \Big] \Big)  \dd s. 
\end{align*}

Gr\"onwall's lemma delivers the final result after taking supremum over $i\in \llbracket 1,N \rrbracket $ and $t\in[0,T]$.
\end{proof}
\color{black}

%
%
%
%
%
%
 
% %
% %
% %
% %%%%%%%%%%%%%%%%%%%%%%%%%%%%%%%%%%%%%%%%%%%%%%%%%%%%%%%%%%%%%%%%%%%%%%%%

\section{Solving the implicit equation of the SSM and a deployment of Newton's method}
\label{appendix: discussion on Newton's method}

In this section we address solving the implicit Equation \eqref{eq:SSTM:scheme 0} in the SSM. We first present a general result stating the level of precision on needs to solve \eqref{eq:SSTM:scheme 0} such that the final convergence rate of the SSM method is preserved (e.g., Theorem \ref{theorem:SSM: strong error 1} and \ref{theorem:SSM: strong error 2}). Proposition \ref{prop: discussion of the approxi} is understood as a requirement of an adequate approximation method. In the subsequent section, we describe a deployment of Newton's method as one such method (among many) with the simulation results in Section \ref{sec:examples} showing its efficiency.

\subsection{Approximation scheme to the SSM}
Recall the SSM from Definition \ref{def:definition of the ssm}. For any timestep $n\in \llbracket 0,M-1\rrbracket$, for any particle $i\in \llbracket 1,N\rrbracket $, define $\hat{\Psi}_i:\bR^d\times\bR^{Nd}\times[0,T]\rightarrow\bR^d$ be the measurable map associating the unique solution $Y_{n}^{i,\star,N}$ of \eqref{eq:SSTM:scheme 0} to its data $\hat{ X}_{n}^{i,N}$, $\hat{ X}_{n}^{N}$ and $h$, i.e., 
\begin{align}
    \label{eq: theory generator of the SSM}
    \hp_i(\hat{ X}_{n}^{i,N},\hat{ X}_{n}^{N},h)=Y_{n}^{i,\star,N},
    \quad
    \hp=(\hp_1,\dots,\hp_N).
\end{align}
The existence of such a map $\hp$ is guaranteed by Lemma \ref{lemma:SSTM:new functions def and properties1} (see also Proposition \ref{prop:yi-yj leq xi-xj} and \ref{prop:sum y square leq sum x square} for some of its good properties). 
We next introduce a version SSM of Definition \ref{def:definition of the ssm} where the implicit equation is solved approximately only. 
\begin{definition}[Approximation scheme to the SSM] 
\label{def:definition of the approxi ssm}
We follow the notation of Definition \ref{def:definition of the ssm} hold. Denote the approximation mapping at each SSM step \eqref{eq:SSTM:scheme 0} as a measurable map  $\bp_i:\bR^d\times\bR^{Nd}\times[0,T]\rightarrow\bR^d$. The SSM variant is then, corresponding to \eqref{eq:SSTM:scheme 0}-\eqref{eq:SSTM:scheme 1}: 
set $\bx_{0}^{i,N}=X^i_0$ for $i\in \llbracket 1,N\rrbracket $;  then for all $i\in \llbracket 1,N\rrbracket $ and $n\in \llbracket 0,M-1\rrbracket$
\begin{align}
\label{eq:SSM approxi:scheme 0}
& \by_{n}^{i,\star,N} =\bp_i(\bx_{n}^{i,N},\bx_{n}^{N},h),
\quad
\bx_{n}^{N}=(\bx_{n}^{1,N},\dots,\bx_{n}^{N,N}),
 \quad 
  \bm^{Y,N}_n(\dd x):= \frac1N \sum_{j=1}^N \delta_{\by_{n}^{j,\star,N}}(\dd x),
 \\
 \label{eq:SSM approxi:scheme 1}
\bx_{n+1}^{i,N} &=\by_{n}^{i,\star,N}
            + b(t_n,\by_{n}^{i,\star,N},\bm^{Y,N}_n) h
            +\sigma(t_n,\by_{n}^{i,\star,N},\bm^{Y,N}_n) \Delta W_{n}^i,\qquad \Delta W_{n}^i=W_{t_{n+1}}^i-W_{t_n}^i,
\end{align}
where for any $i$ the map $\bp_i$ is an approximation to $\hp_i$ solving \eqref{eq: theory generator of the SSM}.
\end{definition}
We emphasise that at this point, our assumption is that the maps $\bp_i$ can be found. We discuss how to find them in the next section.
\begin{proposition} 
\label{prop: discussion of the approxi}
Let the assumptions of Theorem \ref{theorem：moment bound for the big theorm time extensions } hold. Recall the notation of Definition \ref{def:definition of the ssm} and \eqref{def:definition of the approxi ssm}. For the $\hp_i$ and $\bp_i$ defined in \eqref{eq: theory generator of the SSM} and \eqref{eq:SSM approxi:scheme 0} respectively, if $\sup_i \bE[|\hp_i(x_i,x,h)-\bp_i(x_i,x,h) |^2] \leq C h$ for all $x=(x_1,\dots,x_N)\in L_{0}^{2}( \bR^{Nd})$ and some constant $C$   (independent of $h,N,M$ but depending on $T$), then 
\begin{align}
    \label{eq:approxi numerical strong error}
    \sup_{n\in \llbracket 1,M\rrbracket} \sup_{i \in \llbracket 1,N\rrbracket } \bE\big[   |\hx_{n}^{i,N}-\bx_{n}^{i,N} |^2 \big]
  &\le Ch.
\end{align}
\end{proposition}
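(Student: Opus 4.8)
The plan is to run a step-by-step comparison of the two schemes and close a discrete Grönwall recursion on $a_n:=\sup_{i\in\llbracket 1,N\rrbracket}\bE[|\hx_n^{i,N}-\bx_n^{i,N}|^2]$, following closely the pattern of the proof of Theorem \ref{theorem:SSM: strong error 1}. Write $e_n^i:=\hx_n^{i,N}-\bx_n^{i,N}$, let $Y_n^{i,\star,N}$ denote the exact intermediate value \eqref{eq:SSTM:scheme 1} associated with the data $\hx_n^N$, and recall $\by_n^{i,\star,N}=\bp_i(\bx_n^{i,N},\bx_n^N,h)$. A preliminary point is that the approximate scheme has finite second moments at every grid node: this follows by a short induction over $n$ from $\bx_0^N\in L_0^2(\bR^{Nd})$, using the at-most-linear growth and the stability of the solution map $\hp$ of \eqref{sstmmod func def} (Lemma \ref{lemma:SSTM:new functions def and properties1}) together with the standing hypothesis $\sup_i\bE[|\hp_i(x_i,x,h)-\bp_i(x_i,x,h)|^2]\le Ch$; it is needed so that the hypothesis may be legitimately invoked at each step with $x=\bx_n^N$.

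First I would estimate one Euler sub-step. Since $Y_n^{i,\star,N}$, $\by_n^{i,\star,N}$, $\hm^{Y,N}_n$ and $\bm^{Y,N}_n$ are all $\cF_{t_n}$-measurable while $\Delta W_n^i$ is centred and independent of $\cF_{t_n}$, conditioning on $\cF_{t_n}$ removes the stochastic cross term; then Young's inequality, the Lipschitz bounds $(\mathbf{A}^b)$, $(\mathbf{A}^\sigma)$, and exchangeability of the particles (so that $\bE[W^{(2)}(\hm^{Y,N}_n,\bm^{Y,N}_n)^2]\le\bE[|Y_n^{i,\star,N}-\by_n^{i,\star,N}|^2]$) give
\begin{align*}
\bE\big[\,|e_{n+1}^i|^2\,\big]\ \le\ (1+Ch)\,\bE\big[\,|Y_n^{i,\star,N}-\by_n^{i,\star,N}|^2\,\big].
\end{align*}
Next I would control the difference of the implicit values via the splitting
\begin{align*}
Y_n^{i,\star,N}-\by_n^{i,\star,N}
=\underbrace{\big(\hp_i(\hx_n^{i,N},\hx_n^N,h)-\hp_i(\bx_n^{i,N},\bx_n^N,h)\big)}_{=:A_n^i}
+\underbrace{\big(\hp_i(\bx_n^{i,N},\bx_n^N,h)-\bp_i(\bx_n^{i,N},\bx_n^N,h)\big)}_{=:\delta_n^i}.
\end{align*}
The term $A_n^i$ is governed by the stability of the exact SSM implicit map: since $V$ is one-sided Lipschitz on $\bR^{Nd}$ (Remark \ref{remark:OSL for the whole function / system V}) and $h$ satisfies \eqref{eq:h choice}, the map $X\mapsto\hp(X)$ of \eqref{sstmmod func def} is $(1+Ch)^{1/2}$-Lipschitz for the Euclidean norm on $\bR^{Nd}$ (the computation being the one in Proposition \ref{prop:yi-yj leq xi-xj} and Remark \ref{remark: choice of h}), so $\sum_i|A_n^i|^2\le(1+Ch)\sum_i|e_n^i|^2$ and, by exchangeability, $\bE[|A_n^i|^2]\le(1+Ch)a_n$. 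The term $\delta_n^i$ is precisely what the hypothesis controls, $\bE[|\delta_n^i|^2]\le Ch$. Inserting the split (again with Young's inequality), combining with the sub-step estimate, and noting that all constants are independent of $N$, one arrives at a recursion $a_{n+1}\le(1+Ch)a_n+(\text{contribution of }\delta_n)$; since $a_0=0$, the discrete Grönwall lemma then closes the argument and yields \eqref{eq:approxi numerical strong error}.

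I expect the delicate part to be this last step: the per-step solver errors $\delta_n$ accumulate over the $M=T/h$ sub-steps, so one must organise carefully how $\delta_n$ feeds into the recursion — exploiting the split-step structure exactly as in the treatment of the term $I_1$ (and of the special product term) in the proof of Theorem \ref{theorem:SSM: strong error 1} — so that Grönwall returns a bound of order $h$. A secondary technical point, already flagged above, is that the stability estimate for $\hp$ must be formulated at the level of the coupled $\bR^{Nd}$ system, because the implicit equation \eqref{eq:SSTM:scheme 0} links all particles; the per-particle bound is recovered only after taking expectations and using exchangeability, which is also what keeps every constant independent of $N$.
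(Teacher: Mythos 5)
Your architecture is the same as the paper's proof of Proposition \ref{prop: discussion of the approxi}: split $Y_n^{i,\star,N}-\by_n^{i,\star,N}$ into the stability part $\hp_i(\hx_n^{i,N},\hx_n^N,h)-\hp_i(\bx_n^{i,N},\bx_n^N,h)$, controlled by the one-sided-Lipschitz estimate behind \eqref{eq:se1:y-y} (lifted to $\bR^{Nd}$ and brought back per particle by exchangeability, exactly as you say), plus the solver error $\delta_n^i$ controlled by hypothesis; then push the result through the Lipschitz Euler half-step, conditioning on $\cF_{t_n}$ to kill the Brownian cross term. The only presentational difference is that the paper runs this as a two-stage induction (base case, then ``$\sup_i\bE[|e_n^i|^2]\le Ch \Rightarrow \sup_i\bE[|e_{n+1}^i|^2]\le Ch$''), whereas you set up a summed Gr\"onwall recursion. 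Your observation that the approximate scheme must first be shown to have finite second moments before the hypothesis can be invoked at $x=\bx_n^N$ is a point the paper leaves implicit.

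The gap is exactly where you flag it, and it is not merely ``delicate'' --- the recursion you write down does not close. Combining the two halves of the splitting by Young's inequality costs either a factor $2$ in front of $\bE[|A_n^i|^2]$ (whence the Gr\"onwall factor compounds like $2^M$), or, if you insist on a $(1+Ch)$ factor there, a weight $(1+1/h)$ on $\bE[|\delta_n^i|^2]\le Ch$, i.e.\ an additive contribution of order one per step; summing over $M=T/h$ steps then yields $O(1/h)$ or $O(1)$, never $O(h)$. Even if the $\delta_n^i$ were conditionally centred so that all cross terms vanish, the variances alone sum to $M\cdot Ch=CT$. So no reorganisation of the recursion recovers \eqref{eq:approxi numerical strong error} from the per-step hypothesis $\bE[|\delta_n^i|^2]\le Ch$ alone: one needs either a stronger per-step tolerance (of order $h^3$ in mean square for a generic bias, or $h^2$ with conditional centring) or genuine cancellation among the solver errors. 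Be aware that the paper's own argument passes over the same point by re-deriving ``$\le Ch$'' at each inductive step with a generic constant that silently degrades from $n$ to $n+1$, so you cannot import the missing mechanism from there; as a self-contained proof your proposal is incomplete at precisely this accumulation step, and the natural repair is to strengthen the hypothesis on $\bp_i$ rather than to rearrange the Gr\"onwall argument.
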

The main interpretation is that as long as the implicit Equation \eqref{eq:SSTM:scheme 0} is solved approximately up to an accuracy of size $h$ (the time-step increment) in $L^2$-norm, then the final order of convergence of the numerical scheme is preserved. 

\begin{proof}
We proceed by induction since for all $i \in \llbracket 1,N\rrbracket $, by definition, we have  $\hx_{0}^{i,N}=\bx_{0}^{i,N}=X^i_0$.

\textit{Step: The initial case.} We prove that 
$\sup_{i \in \llbracket 1,N\rrbracket } \bE\big[   |\hx_{1}^{i,N}-\bx_{1}^{i,N} |^2 \big] \le Ch $. By the assumptions of Proposition \ref{prop: discussion of the approxi} we have
% for all  $i \in \llbracket 1,N\rrbracket $,
\begin{align*}
    \sup_{i \in \llbracket 1,N\rrbracket }\bE\big[   |Y_{0}^{i,\star,N}-\by_{1}^{i,\star,N} |^2 \big]
    \le
    \sup_{i \in \llbracket 1,N\rrbracket }
    \bE\big[
    |\hp_i(X^i_0,X_0,h)-\bp_i(X^i_0,X_0,h) |^2
    \big]
    \le C h.
\end{align*}
For all  $i \in \llbracket 1,N\rrbracket $, since function $b$ and $\sigma$ are Lipschitz, by similar arguments in \eqref{eq:se1:I2 result for term b}, 
\begin{align}
\nonumber
    \sup_{i \in \llbracket 1,N\rrbracket }\bE\big[   |\hx_{1}^{i,N}-\bx_{1}^{i,N} |^2 \big]
    &\le 
    C \sup_{i \in \llbracket 1,N\rrbracket }\bE\Big[ 
    |Y_{0}^{i,\star,N}-\by_{1}^{i,\star,N} |^2
    + \big| W^{(2)}(\bm_0^{Y,N},\hm_{0}^{Y,N} )  \big|^2 h
    \Big]
    \\\label{eq:base1}
    &
    \le
     \sup_{i \in \llbracket 1,N\rrbracket }\bE\big[   |Y_{0}^{i,\star,N}-\by_{1}^{i,\star,N} |^2 \big]
     \le Ch.
\end{align}
\textit{Step: The inductive case.} For $n \in \llbracket 1,M-1\rrbracket$, given $\sup_{i \in \llbracket 1,N\rrbracket } \bE\big[   |\hx_{n}^{i,N}-\bx_{n}^{i,N} |^2 \big] \le Ch $,
we need to proof $\sup_{i \in \llbracket 1,N\rrbracket } \bE\big[   |\hx_{n+1}^{i,N}-\bx_{n+1}^{i,N} |^2 \big] \le Ch $, similarly, we first proof the result for the first step, from the assumption of Proposition \ref{prop: discussion of the approxi},
\begin{align}
\nonumber
    \sup_{i \in \llbracket 1,N\rrbracket }&\bE\big[   |Y_{n}^{i,\star,N}-\by_{n}^{i,\star,N} |^2 \big]
    =
   \sup_{i \in \llbracket 1,N\rrbracket } 
   \bE\big[
    |\hp_i(\hx_{n}^i,\hx_{n},h)-\bp(\bx_{n}^i,\bx_{n},h) |^2
    \big]
    \\
    \nonumber
    &
    \le
    2 \sup_{i \in \llbracket 1,N\rrbracket } 
   \bE\big[
    |\hp_i(\hx_{n}^i,\hx_{n},h)-\hp_i(\bx_{n}^i,\bx_{n},h) |^2
    \big]
    +
     2\sup_{i \in \llbracket 1,N\rrbracket } 
   \bE\big[
    |\hp_i(\bx_{n}^i,\bx_{n},h)-\bp_i(\bx_{n}^i,\bx_{n},h) |^2
    \big]
    \\
    \label{eq:indc1}
    &
    \le  2 \sup_{i \in \llbracket 1,N\rrbracket } 
   \bE\big[
    |\hp_i(\hx_{n}^i,\hx_{n},h)-\hp_i(\bx_{n}^i,\bx_{n},h) |^2
    \big]
    +2h.
\end{align}
Recall the results in Section \ref{subsection:Proof of strong error 1}, the arguments in \eqref{eq:se1:y-y} are satisfied for all $i\in \llbracket 1,N\rrbracket$, thus,
\begin{align*}
    \sup_{i \in \llbracket 1,N\rrbracket } 
   \bE\big[
    |\hp_i(\hx_{n}^i,\hx_{n},h)-\hp_i(\bx_{n}^i,\bx_{n},h) |^2
    \leq  
    \sup_{i \in \llbracket 1,N\rrbracket } 
   \bE\big[    | \hx_{n}^i - \bx_{n}^i |^2 (1+Ch)
    \leq Ch.
\end{align*}
 Plug the result above into \eqref{eq:indc1} to conclude 
 \begin{align*}
      \sup_{i \in \llbracket 1,N\rrbracket }&\bE\big[   |Y_{n}^{i,\star,N}-\by_{n}^{i,\star,N} |^2 \big]
      \leq Ch.
 \end{align*}
And, by similar argument in \eqref{eq:base1}, we have
\begin{align*}
    \sup_{i \in \llbracket 1,N\rrbracket }\bE\big[   |\hx_{n+1}^{i,N}-\bx_{n+1}^{i,N} |^2 \big]
    \leq Ch.
\end{align*}
\end{proof}

\subsection{Deploying Newton's method}
\label{sec:NewtonMethodJacobian}
We now provide a discussion on using Newton's method to solve \eqref{eq:SSTM:scheme 0} in the scope of the SSM. 
We first introduce Newton's method for high dimensions. Recall the functions $V,u,f$ in \eqref{Eq:General MVSDE shape of v}, \eqref{eq:Define-Function-V}, and the SSM in Definition \ref{def:definition of the ssm}. 

For simplicity of presentation, we assume that the function $u$ only depends on the space-components (this is inline with the numerical examples section) and $f$ has continuous second order derivative. Fix $x\in\bR^{Nd}$, for $y=(y_1,y_2,\dots,y_N)\in(\bR^d)^N$, for the functions $V,F:\bR^{Nd} \rightarrow \bR^{Nd}$ and $u,f:\bR^{d} \rightarrow \bR^{d}$, we want to find a solution of $y\mapsto F(y)$ (given by \eqref{eq:SSTM:scheme 0}) defined as 
\begin{align*}
    \bR^{Nd}\ni y \mapsto F(y)=y-x-hV(y)=0,\quad V=(V_1,V_2,\dots,V_N)
    \quad\textrm{and}\quad 
    V_i(y)=u(y_i)+\frac{1}{N}\sum_{j=1}^{N}f(y_i-y_j).
\end{align*}
For a fixed $x\in\bR^{Nd}$, Lemma \ref{lemma:SSTM:new functions def and properties1} ensures that a unique $y^\star$ exists satisfying $F(y^\star)=0$. 
Setting as initial guess of $y^0=x$, we denote the $\kappa^{th}$-iteration of the Newton method by $y^{\kappa}$ and define it as 
\begin{align*}
    y^0=x, \quad
    y^{\kappa+1}=y^{\kappa}-[\nabla  F]^{-1}(y^{\kappa}) F(y^{\kappa}), 
\end{align*}
where $\nabla  F$ stands for the Jacobian matrix of $F$. 

Denoting $I_{Nd}$ as the identity matrix in $Nd$-dimensions, we express the Jacobian of $F$ in closed form as 
\begin{align*}
     [\nabla F] (y ) &=I_{Nd} -h A(y)+\frac{h}{N} \Gamma(y) \quad \textrm{where for } y=(y_1,y_2,\dots,y_N)\in(\bR^d)^N \textrm{ we have }
     \\  
     A(y)&=
    \begin{bmatrix}
    \nabla u(y_1) & \cdots & 0 \\
    \vdots & \ddots & \vdots \\
    0 & \cdots &  \nabla u(y_N)
    \end{bmatrix} 
    +
    \begin{bmatrix}
    \frac{1}{N}\sum_{j=1}^{N} \nabla f(y_1-y_j) & \cdots & 0 \\
    \vdots & \ddots & \vdots \\
    0 & \cdots &  \frac{1}{N}\sum_{j=1}^{N} \nabla f(y_N-y_j)
    \end{bmatrix}
    % \\
    % &
    % \begin{bmatrix}
    % \nabla u(y_1)+\frac{1}{N}\sum_{j=1}^{N} \nabla f(y_1-y_j) & \cdots & 0 \\
    % \vdots & \ddots & \vdots \\
    % 0 & \cdots &  \nabla u(y_N)+\frac{1}{N}\sum_{j=1}^{N} \nabla f(y_N-y_j)
    % \end{bmatrix}
    \\
     \Gamma(y)&=
     \begin{bmatrix}
    \nabla f(y_1-y_1) & \cdots & \nabla f(y_1-y_n) \\
    \vdots & \ddots & \vdots \\
    \nabla f(y_n-y_1) & \cdots & \nabla f(y_n-y_n)
    \end{bmatrix}.
\end{align*}     
The matrix $A(y)$ is a block diagonal matrix, and $\Gamma$ is a symmetric matrix since $f$ is odd and its main diagonal is equal to $\nabla f(\mathbf{0})$. We stop the Newton's iteration at step $\kappa$ when the error tolerance rule $\| y^{\kappa }- y^{\kappa-1}\|_\infty<\sqrt{h}$ is satisfied. We note that since $\Gamma(\cdot)$ is a symmetric matrix weighted by $\frac{h}{N}$ which is an order $1/N$ smaller that $I_{Nd}$ and $hA(\cdot)$ one can think of ignoring it in favour of an approximate Newton's method.

\textit{Theoretical foundation for methodological choices.} As mentioned, Lemma \ref{lemma:SSTM:new functions def and properties1} ensures a unique $y^\star$ exists solving $F(y^\star)=0$. 
Proposition \ref{prop:yi-yj leq xi-xj} and \ref{prop:sum y square leq sum x square} ensure continuous dependence of $y^\star$ on $x$, and hence assuming $h$ small enough the choice of $y^0=x$ as the initial guess for $y^\star$ in the Newton method is justified. 
From \cite[Theorem 4.4]{Suli2003NumericsBook}, under the extra assumption that $F$ is twice differentiable with continuous derivatives, we have that the Newton iteration converges quadratically to the unique solution $y^\star$. In fact, given $h$ small enough and complementing with the trick highlighted in Remark \ref{rem:Constraint on h is soft} one can show that $V$ in \eqref{eq:Define-Function-V} has a strictly negative one-sided Lipschitz constant and hence $\nabla V$ is strict negative definite matrix (see \cite{Lionnetetal2015}) and hence so is $\nabla F$ -- this ensures that $\nabla F$ is nonsingular (also at $y^\star$) and thus \cite[Theorem 4.4]{Suli2003NumericsBook} applies guaranteeing convergence.  

In the scope of the examples presented in Section \ref{sec:examples}, with the choices above, we found that the condition $\| y^{\kappa }- y^{\kappa-1}\|_\infty<\sqrt{h}$ is attained within two to four Newton method iterations, i.e., with $\kappa \leq 4$.

\color{black}

\bibliographystyle{abbrv}
%\bibliographystyle{abbrvnat} % Choose Phys. Rev. style for bibliography
%\bibliographystyle{abbrv}

%\newpage
% \bibliography{ZBibFileGoesHere}  % ``name``.bib is the name of thedatabase

\begin{thebibliography}{}

\end{thebibliography}


\begin{thebibliography}{10}

\bibitem{adams2020large}
D.~Adams, G.~dos Reis, R.~Ravaille, W.~Salkeld, and J.~Tugaut.
\newblock Large deviations and exit-times for reflected {M}c{K}ean-{V}lasov
  equations with self-stabilising terms and superlinear drifts.
\newblock {\em Stochastic Process. Appl.}, 146:264--310, 2022.

\bibitem{adams2021entropic}
D.~Adams, M.~H. Duong, and G.~dos Reis.
\newblock Entropic regularisation of non-gradient systems.
\newblock {\em SIAM Journal on Mathematical Analysis}, 2022.

\bibitem{adams2021operatorsplitting}
D.~Adams, M.~H. Duong, and G.~dos Reis.
\newblock Operator-splitting schemes for degenerate, non-local,
  conservative-dissipative systems.
\newblock {\em Discrete and Continuous Dynamical Systems}, 0:--, 2022.

\bibitem{agarwalstefano2021}
A.~Agarwal and S.~Pagliarani.
\newblock A {F}ourier-based {P}icard-iteration approach for a class of
  {M}c{K}ean-{V}lasov {SDE}s with {L}\'{e}vy jumps.
\newblock {\em Stochastics}, 93(4):592--624, 2021.

\bibitem{Baladron2012}
J.~Baladron, D.~Fasoli, O.~Faugeras, and J.~Touboul.
\newblock Mean-field description and propagation of chaos in networks of
  {H}odgkin-{H}uxley and {F}itz{H}ugh-{N}agumo neurons.
\newblock {\em J. Math. Neurosci.}, 2:Art. 10, 50, 2012.

\bibitem{beck2021full}
C.~Beck, M.~Hutzenthaler, A.~Jentzen, and E.~Magnani.
\newblock Full history recursive multilevel {P}icard approximations for
  ordinary differential equations with expectations.
\newblock {\em arXiv e-prints}, pages arXiv--2103, 2021.

\bibitem{belomestny2021semiparametric}
D.~Belomestny, V.~Pilipauskait{\.e}, and M.~Podolskij.
\newblock Semiparametric estimation of {McK}ean-{V}lasov {SDE}s.
\newblock {\em arXiv preprint arXiv:2107.00539}, 2021.

\bibitem{belomestny2018projected}
D.~Belomestny and J.~Schoenmakers.
\newblock Projected particle methods for solving {M}c{K}ean-{V}lasov stochastic
  differential equations.
\newblock {\em SIAM J. Numer. Anal.}, 56(6):3169--3195, 2018.

\bibitem{2015ssmBandC}
W.-J. Beyn, E.~Isaak, and R.~Kruse.
\newblock Stochastic {C}-stability and {B}-consistency of explicit and implicit
  {E}uler-type schemes.
\newblock {\em J. Sci. Comput.}, 67(3):955--987, 2016.

\bibitem{bolley2011stochastic}
F.~Bolley, J.~A. Ca\~{n}izo, and J.~A. Carrillo.
\newblock Stochastic mean-field limit: non-{L}ipschitz forces and swarming.
\newblock {\em Math. Models Methods Appl. Sci.}, 21(11):2179--2210, 2011.

\bibitem{Bossy2015}
M.~Bossy, O.~Faugeras, and D.~Talay.
\newblock Clarification and complement to ``{M}ean-field description and
  propagation of chaos in networks of {H}odgkin-{H}uxley and
  {F}itz{H}ugh-{N}agumo neurons''.
\newblock {\em J. Math. Neurosci.}, 5:Art. 19, 23, 2015.

\bibitem{bossytalay1997}
M.~Bossy and D.~Talay.
\newblock A stochastic particle method for the {M}c{K}ean-{V}lasov and the
  {B}urgers equation.
\newblock {\em Math. Comp.}, 66(217):157--192, 1997.

\bibitem{Carrillo2019}
J.~A. Carrillo, Y.-P. Choi, and O.~Tse.
\newblock Convergence to equilibrium in {W}asserstein distance for damped
  {E}uler equations with interaction forces.
\newblock {\em Comm. Math. Phys.}, 365(1):329--361, 2019.

\bibitem{Carrillo2010}
J.~A. Carrillo, M.~Fornasier, G.~Toscani, and F.~Vecil.
\newblock Particle, kinetic, and hydrodynamic models of swarming.
\newblock In {\em Mathematical modeling of collective behavior in
  socio-economic and life sciences}, Model. Simul. Sci. Eng. Technol., pages
  297--336. Birkh\"{a}user Boston, Boston, MA, 2010.

\bibitem{cattiaux2008probabilistic}
P.~Cattiaux, A.~Guillin, and F.~Malrieu.
\newblock Probabilistic approach for granular media equations in the
  non-uniformly convex case.
\newblock {\em Probab. Theory Related Fields}, 140(1-2):19--40, 2008.

\bibitem{chaintron2021propagation}
L.-P. Chaintron and A.~Diez.
\newblock Propagation of chaos: a review of models, methods and applications.
\newblock {\em arXiv preprint arXiv:2106.14812}, 2021.

\bibitem{2021SSM}
X.~Chen and G.~dos Reis.
\newblock A flexible split-step scheme for solving {M}c{K}ean-{V}lasov
  stochastic differential equations.
\newblock {\em Appl. Math. Comput.}, 427:Paper No. 127180, 23, 2022.

\bibitem{comte2022nonparametric}
F.~Comte and V.~Genon-Catalot.
\newblock Nonparametric adaptive estimation for interacting particle systems.
\newblock {\em HAL Id: hal-03696877}, 2022.

\bibitem{crisanmcmurray2019}
D.~Crisan and E.~McMurray.
\newblock Cubature on {W}iener space for {M}c{K}ean-{V}lasov {SDE}s with smooth
  scalar interaction.
\newblock {\em Ann. Appl. Probab.}, 29(1):130--177, 2019.

\bibitem{delarue2018masterCLT}
F.~Delarue, D.~Lacker, and K.~Ramanan.
\newblock From the master equation to mean field game limit theory: a central
  limit theorem.
\newblock {\em Electron. J. Probab.}, 24:Paper No. 51, 54, 2019.

\bibitem{reis2018simulation}
G.~dos Reis, S.~Engelhardt, and G.~Smith.
\newblock Simulation of {M}c{K}ean-{V}lasov {SDE}s with super-linear growth.
\newblock {\em IMA J. Numer. Anal.}, 42(1):874--922, 2022.

\bibitem{dosReisSalkeldTugaut2017}
G.~dos Reis, W.~Salkeld, and J.~Tugaut.
\newblock Freidlin-{W}entzell {LDP} in path space for {M}c{K}ean-{V}lasov
  equations and the functional iterated logarithm law.
\newblock {\em Ann. Appl. Probab.}, 29(3):1487--1540, 2019.

\bibitem{reis2018importance}
G.~dos Reis, G.~Smith, and P.~Tankov.
\newblock Importance sampling for {McK}ean-{V}lasov {SDE}s.
\newblock {\em arXiv preprint arXiv:1803.09320}, 2018.

\bibitem{Dreyer2011phasetransition}
W.~Dreyer, M.~Gaber\v{s}\v{c}ek, C.~Guhlke, R.~Huth, and J.~Jamnik.
\newblock Phase transition in a rechargeable lithium battery.
\newblock {\em European J. Appl. Math.}, 22(3):267--290, 2011.

\bibitem{fournier2014propagation}
N.~Fournier, M.~Hauray, and S.~Mischler.
\newblock Propagation of chaos for the 2{D} viscous vortex model.
\newblock {\em J. Eur. Math. Soc. (JEMS)}, 16(7):1423--1466, 2014.

\bibitem{GENONCATALOT2021}
V.~Genon-Catalot and C.~Lar\'{e}do.
\newblock Probabilistic properties and parametric inference of small variance
  nonlinear self-stabilizing stochastic differential equations.
\newblock {\em Stochastic Process. Appl.}, 142:513--548, 2021.

\bibitem{giesecke2020inference}
K.~Giesecke, G.~Schwenkler, and J.~A. Sirignano.
\newblock Inference for large financial systems.
\newblock {\em Mathematical Finance}, 30(1):3--46, 2020.

\bibitem{gobetpagliarani2018}
E.~Gobet and S.~Pagliarani.
\newblock Analytical approximations of non-linear {SDE}s of {M}c{K}ean-{V}lasov
  type.
\newblock {\em J. Math. Anal. Appl.}, 466(1):71--106, 2018.

\bibitem{Guhlke2018}
C.~Guhlke, P.~Gajewski, M.~Maurelli, P.~K. Friz, and W.~Dreyer.
\newblock Stochastic many-particle model for {LFP} electrodes.
\newblock {\em Contin. Mech. Thermodyn.}, 30(3):593--628, 2018.

\bibitem{harang2020pathwise}
F.~Harang and A.~Mayorcas.
\newblock Pathwise regularisation of singular interacting particle systems and
  their mean field limits.
\newblock {\em arXiv preprint arXiv:2010.15517}, 2020.

\bibitem{HerrmannImkellerPeithmann2008}
S.~Herrmann, P.~Imkeller, and D.~Peithmann.
\newblock Large deviations and a {K}ramers' type law for self-stabilizing
  diffusions.
\newblock {\em Ann. Appl. Probab.}, 18(4):1379--1423, 2008.

\bibitem{herrmann2012self}
S.~Herrmann and J.~Tugaut.
\newblock Self-stabilizing processes: uniqueness problem for stationary
  measures and convergence rate in the small-noise limit.
\newblock {\em ESAIM Probab. Stat.}, 16:277--305, 2012.

\bibitem{Holm2006}
D.~D. Holm and V.~Putkaradze.
\newblock Formation of clumps and patches in self-aggregation of finite-size
  particles.
\newblock {\em Phys. D}, 220(2):183--196, 2006.

\bibitem{HuangRenWang2021DDSDE-wellposedness}
X.~Huang, P.~Ren, and F.-Y. Wang.
\newblock Distribution dependent stochastic differential equations.
\newblock {\em Front. Math. China}, 16(2):257--301, 2021.

\bibitem{HutzenthalerJentzen2015AMSbook}
M.~Hutzenthaler and A.~Jentzen.
\newblock Numerical approximations of stochastic differential equations with
  non-globally {L}ipschitz continuous coefficients.
\newblock {\em Mem. Amer. Math. Soc.}, 236(1112):v+99, 2015.

\bibitem{hutzenthaler2022multilevel}
M.~Hutzenthaler, T.~Kruse, and T.~A. Nguyen.
\newblock Multilevel {P}icard approximations for {M}c{K}ean-{V}lasov stochastic
  differential equations.
\newblock {\em J. Math. Anal. Appl.}, 507(1):Paper No. 125761, 14, 2022.

\bibitem{jin2020rbm1}
S.~Jin, L.~Li, and J.-G. Liu.
\newblock Random batch methods ({RBM}) for interacting particle systems.
\newblock {\em J. Comput. Phys.}, 400:108877, 30, 2020.

\bibitem{Kolokolnikov2013}
T.~Kolokolnikov, J.~A. Carrillo, A.~Bertozzi, R.~Fetecau, and M.~Lewis.
\newblock Emergent behaviour in multi-particle systems with non-local
  interactions [{E}ditorial].
\newblock {\em Phys. D}, 260:1--4, 2013.

\bibitem{kumar2020explicit}
C.~Kumar and Neelima.
\newblock On explicit {M}ilstein-type scheme for {M}c{K}ean-{V}lasov stochastic
  differential equations with super-linear drift coefficient.
\newblock {\em Electron. J. Probab.}, 26:Paper No. 111, 32, 2021.

\bibitem{Lacker2018}
D.~Lacker.
\newblock On a strong form of propagation of chaos for {M}c{K}ean-{V}lasov
  equations.
\newblock {\em Electron. Commun. Probab.}, 23:Paper No. 45, 11, 2018.

\bibitem{lacker2021hierarchies}
D.~Lacker.
\newblock Hierarchies, entropy, and quantitative propagation of chaos for mean
  field diffusions.
\newblock {\em arXiv e-prints}, pages arXiv--2105, 2021.

\bibitem{LiCheng2007MoreonNewtonMethod}
D.-H. Li and W.~Cheng.
\newblock Recent progress in the global convergence of quasi-{N}ewton methods
  for nonlinear equations.
\newblock {\em Hokkaido Math. J.}, 36(4):729--743, 2007.

\bibitem{li2019mean}
L.~Li, J.-G. Liu, and P.~Yu.
\newblock On the mean field limit for {B}rownian particles with {C}oulomb
  interaction in 3{D}.
\newblock {\em J. Math. Phys.}, 60(11):111501, 34, 2019.

\bibitem{Lionnetetal2015}
A.~Lionnet, G.~dos Reis, and L.~Szpruch.
\newblock Time discretization of {FBSDE} with polynomial growth drivers and
  reaction-diffusion {PDE}s.
\newblock {\em Ann. Appl. Probab.}, 25(5):2563--2625, 2015.

\bibitem{Malrieu2003}
F.~Malrieu.
\newblock Convergence to equilibrium for granular media equations and their
  {E}uler schemes.
\newblock {\em Ann. Appl. Probab.}, 13(2):540--560, 2003.

\bibitem{malrieu2006concentration}
F.~Malrieu and D.~Talay.
\newblock Concentration inequalities for {E}uler schemes.
\newblock In {\em Monte {C}arlo and quasi-{M}onte {C}arlo methods 2004}, pages
  355--371. Springer, Berlin, 2006.

\bibitem{marchioro2012mathematical}
C.~Marchioro and M.~Pulvirenti.
\newblock {\em Mathematical theory of incompressible nonviscous fluids},
  volume~96 of {\em Applied Mathematical Sciences}.
\newblock Springer-Verlag, New York, 1994.

\bibitem{Martinez2000ReviewonNewton}
J.~M. Mart{\i}nez.
\newblock Practical quasi-{N}ewton methods for solving nonlinear systems.
\newblock {\em Journal of computational and Applied Mathematics},
  124(1-2):97--121, 2000.

\bibitem{McKean1966}
H.~P. McKean, Jr.
\newblock A class of {M}arkov processes associated with nonlinear parabolic
  equations.
\newblock {\em Proc. Nat. Acad. Sci. U.S.A.}, 56:1907--1911, 1966.

\bibitem{stevens1997aggregation}
H.~G. Othmer and A.~Stevens.
\newblock Aggregation, blowup, and collapse: the {ABC}s of taxis in reinforced
  random walks.
\newblock {\em SIAM J. Appl. Math.}, 57(4):1044--1081, 1997.

\bibitem{reisinger2020adaptive}
C.~Reisinger and W.~Stockinger.
\newblock An adaptive {E}uler-{M}aruyama scheme for {M}c{K}ean-{V}lasov {SDE}s
  with super-linear growth and application to the mean-field
  {F}itz{H}ugh-{N}agumo model.
\newblock {\em J. Comput. Appl. Math.}, 400:Paper No. 113725, 23, 2022.

\bibitem{SolodovSvaiter1999ConvInexactNewton}
M.~V. Solodov and B.~F. Svaiter.
\newblock A globally convergent inexact {N}ewton method for systems of monotone
  equations.
\newblock In {\em Reformulation: nonsmooth, piecewise smooth, semismooth and
  smoothing methods ({L}ausanne, 1997)}, volume~22 of {\em Appl. Optim.}, pages
  355--369. Kluwer Acad. Publ., Dordrecht, 1999.

\bibitem{Suli2003NumericsBook}
E.~S\"{u}li and D.~F. Mayers.
\newblock {\em An introduction to numerical analysis}.
\newblock Cambridge University Press, Cambridge, 2003.


\bibitem{mao2008stochastic}
X.~Mao.
\newblock {\em Stochastic differential equations and applications}.
\newblock Horwood Publishing Limited, Chichester, second edition, 2008.



\bibitem{Sznitman1991}
A.-S. Sznitman.
\newblock Topics in propagation of chaos.
\newblock {\em Ecole d'Et{\'e} de Probabilit{\'e}s de Saint-Flour XIX ---
  1989}, pages 165--251, 1991.

\bibitem{talayvaillant2003}
D.~Talay and O.~Vaillant.
\newblock A stochastic particle method with random weights for the computation
  of statistical solutions of {M}c{K}ean-{V}lasov equations.
\newblock {\em Ann. Appl. Probab.}, 13(1):140--180, 2003.

\bibitem{2013doublewell}
J.~Tugaut.
\newblock Convergence to the equilibria for self-stabilizing processes in
  double-well landscape.
\newblock {\em Ann. Probab.}, 41(3A):1427--1460, 2013.

\end{thebibliography}

\end{document}